\documentclass[a4paper,12pt]{article}

\usepackage{amsthm,amsmath,stmaryrd,bbm,hyperref,geometry,color,authblk}
\usepackage[utf8]{inputenc}
\usepackage{amssymb}
\usepackage[english]{babel}
\usepackage{graphicx}
\usepackage{amsfonts,amssymb}
\usepackage{verbatim}
\usepackage{enumitem}

\newcommand{\po}{\left(}
\newcommand{\pf}{\right)}
\newcommand{\co}{\left[}
\newcommand{\cf}{\right]}
\newcommand{\cco}{\llbracket}
\newcommand{\ccf}{\rrbracket}
\newcommand{\R}{\mathbb R}

\newcommand{\N}{\mathbb N} 
\newcommand{\Sd}{\mathbb S^{d-1}}
 
\newcommand{\dd}{\text{d}}

\newcommand{\na}{\nabla}
\newcommand{\1}{\mathbbm{1}} 
\newcommand{\unif}{\overline}

\newtheorem{theorem}{Theorem}
\newtheorem{assumption}{Assumption}
\newtheorem{lemma}[theorem]{Lemma}

\newtheorem{corollary}[theorem]{Corollary}
\newtheorem{proposition}[theorem]{Proposition}
\newtheorem{remark}{Remark}
\newtheorem{example}{Example}

\title{Long-time $L^p$ Wasserstein contraction for diffusion processes without global dissipativity}
\author{Pierre Monmarché}

\begin{document}
\maketitle

\begin{abstract}
The fact that a Markov diffusion semi-group on $\R^d$ contracts the $L^p$ Wasserstein distance, which  has been extensively used to establish uniform-in-time stability estimates (e.g. with respect to numerical discretization errors), is a well-studied question in the case where the distances are in fact deterministically contracted by the drift (global dissipativity condition) or in the case $p=1$ (with reflection couplings). This work focuses on the non-globally dissipative case with $p>1$. This situation was previously considered in \cite{MonmarcheBruit}, but only for elliptic processes, and with a restriction on the diffusivity coefficient (which had to be large enough). Here, we extend this analysis to non-elliptic processes and provide sharper conditions to get contractions along synchronous coupling, including negative results, lower bounds and a characterization (at least in dimension 1) in terms of the maximal eigenvalue of a Feynman-Kac operator.
\end{abstract}

\section{Introduction and main results}

\subsection{Motivation and difficulties}

Consider on $\R^d$ a diffusion process solving 
\begin{equation}
\label{eq:EDS_X}
\dd X_t  = b(X_t) \dd t + \sigma \dd B_t
\end{equation}
with $b\in\mathcal C^1(\R^d,\R^d)$,  $\sigma\in\R^{d\times d}$ a constant matrix and $B$ an Brownian motion in $\R^d$. Some of our results could be extended to cases where $\sigma$ depends on $X$, but since this is not the case of the models we are interested in, for the sake of clarity we won't consider this generalization. However, since we are interested in kinetic processes (see Section~\ref{sec:applications}), let us emphasize that $\sigma$ may be singular (in fact, our main general result applies  when $\sigma=0$, which is nevertheless not the case of interest).

Denote by $(P_t)_{t\geqslant 0}$ the Markov semi-group associated to the process, i.e. $P_t f(x) = \mathbb E_x(f(X_t))$ for measurable bounded $f$, and write $\nu P_t$ the law of $X_t$ starting from  $X_0\sim \nu$.

A semimetric on $\R^d$ is a function $\omega:\R^d\times \R^d \rightarrow \R_+$ with $\omega(x,y)=\omega(y,x)$ for all $x,y\in\R^d$ and such that $\omega(x,y)=0$ if and only if $x=y$. For such a semimetric and two probability measures $\nu,\mu$ over $\R^d$, we write
\[\mathcal W_{\omega}(\nu,\mu) = \inf_{\pi\in\Pi(\nu,\mu)} \int_{\R^d\times\R^d} \omega(x,y)\pi(\dd x,\dd y)\,,\]
where $\Pi(\nu,\mu)$ stands for the set of transport plans between $\nu$ and $\mu$. When $\omega(x,y)=|x-y|^p$ for some $p\geqslant 1$ we write $\mathcal W_p(\nu,\mu) = \po \mathcal W_{\omega}(\nu,\mu)\pf^{1/p}$ the usual $L^p$ Wasserstein distance. If $\omega$ is such that $\omega(x,y) \leqslant C \po 1 +|x-y|^p \pf $ for some $C>0$, $p\geqslant 1$, then $\mathcal W_{\omega}(\nu,\mu)<\infty$ for all $\nu,\mu\in\mathcal P_p(\R^d)$ (the set of probability measures on $\R^d$ with finite $p^{th}$ moment).

The main goal of this work is to give some criteria to ensure that, for a given $p\geqslant 1$, $P_t$ is a contraction of $\mathcal W_p$ for $t$ large enough, i.e.
\begin{equation}
\label{eq:contraction_def1}
\forall \nu,\mu \in \mathcal P_p(\R^d)\,,\qquad \mathcal W_p(\nu P_{t_*},\mu P_{t_*}) \leqslant \gamma \mathcal W_p(\nu,\mu)
\end{equation}
for some $t_*>0$ and $\gamma\in[0,1)$. Assuming that $b$ is one-sided Lipschitz (see Assumtion~\ref{Assum:basic}),  by the semi-group property, this is equivalent to the existence of $C,\lambda>0$ such that 
\begin{equation}
\label{eq:contraction_def}
\forall \nu,\mu \in \mathcal P_p(\R^d)\,, \forall t\geqslant 0,\qquad \mathcal W_p(\nu P_{t},\mu P_{t}) \leqslant C e^{-\lambda t}\mathcal W_p(\nu,\mu)\,.
\end{equation}
By conditioning with respect to the initial condition, we can equivalently consider only initial conditions $\nu=\delta_x,\mu=\delta_y$ for $x,y\in\R^d$. In other words, we are looking for conditions under which
\[\beta_p(t) := \sup_{\nu,\mu\in\mathcal P_p(\R^d), \nu\neq \mu} \frac{\mathcal W_p(\nu P_t,\mu P_t)}{\mathcal W_p(\nu,\mu)} = \sup_{x,y\in\R^d,x\neq y} \frac{\mathcal W_p(\delta_x P_t,\delta_y P_t)}{|x-y|}\,,\]
which may be referred to as a $\mathcal W_p$-Dobrushin contraction coefficient \cite{DelMoral} or an $L^p$ coarse Ricci curvature \cite{JoulinOllivier}, goes to zero at infinity (equivalently, is less than $1$ for $t$ large enough). By Jensen's inequality, $\mathcal W_p \leqslant \mathcal W_q$ when $q\geqslant p$, and thus $p\mapsto \beta_p(t)$ is non-decreasing for all $t\geqslant 0$. In other words, the larger $p$, the stronger the $\mathcal W_p$-contraction. Note that we are specifically interested in the standard Wasserstein distances associated to the Euclidean distance, and in particular we won't consider Lyapunov-based weights as in e.g. \cite{EberleGuillinZimmer,DelMoral}.

Contractions of the form~\eqref{eq:contraction_def} have a lot of important consequences, the first one being that, by completeness of $(\mathcal P_p(\R^d),\mathcal W_p)$, it implies by  the Banach fixed-point theorem (together with the semi-group property) that $(P_t)_{t\geqslant 0}$ admits a unique invariant measure $\mu_\infty \in \mathcal P_p(\R^d)$ and that $\nu P_t $ converges to $\mu_\infty$ exponentially fast in $\mathcal W_p$ for all $\nu\in\mathcal P_p(\R^d)$. However, we emphasize that a contraction~\eqref{eq:contraction_def} is a priori much stronger than the exponential contraction towards the invariant measure in the $\mathcal W_p$ sense, which would be~\eqref{eq:contraction_def} but only with $\mu = \mu_\infty$. For instance, in the case where $\sigma=\sqrt{2/\beta}\mathrm{Id}$ for some $\beta>0$ and $b(x) = -\na U(x)$, a $\mathcal W_2$ contraction towards $\mu_\infty \propto \exp(-\beta U)$ is known to hold when $\mu_\infty$ satisfies a  so-called log-Sobolev inequality (see e.g. \cite[Theorem 2.1.]{ren2021exponential} or similarly \cite[Theorem 2]{MonmarcheGuillinVFP} in a kinetic case), but  a global contraction~\eqref{eq:contraction_def} with $p=2$ is only known when $U$  is strongly  convex \cite{MonmarcheContraction}  or when $U$ is strongly convex outside a compact set and the temperature $\beta^{-1}$ is large enough \cite{MonmarcheBruit} (and the discussion in Section~\ref{sec:numerique} suggests that it could fail at low temperature). Beyond convergence to equilibrium, contractions like~\eqref{eq:contraction_def} have been extensively used to compare two different processes, for instance the diffusion~\eqref{eq:EDS_X} and a numerical time-discretization \cite{M25,10.1214/23-AAP2034,dalalyan2022bounding,cheng2018underdamped}, or a system of interacting particles and $N$ independent McKean-Vlasov diffusions, or two McKean-Vlasov diffusions \cite{schuh2024global,durmus2020elementary,malrieu2003convergence,bolley2010trend,M15}, etc. In these situations, it is then sufficient to analyse the difference between the two processes (e.g. numerical error analysis or propagation of chaos) in the $\mathcal W_p$ sense over the interval $[0,t_*]$, and then~\eqref{eq:contraction_def1} with the triangular inequality automatically improves these finite-time estimates to uniform-in-time ones. These arguments really rely on the contraction and, for instance, do not follow from a bound of the form 
\begin{equation}
\label{loc:Walpha}
\forall \nu,\mu \in \mathcal P_p(\R^d)\,, \forall t\geqslant 0,\qquad \mathcal W_p(\nu P_{t},\mu P_{t}) \leqslant C e^{-\lambda t}\po \mathcal W_p(\nu,\mu) + \mathcal W_p^{1/p}(\nu,\mu)\pf\,,
\end{equation}
for some $C,\lambda>0$, which is the kind of bounds proven by establishing a contraction of $\mathcal W_\omega$ with a semimetric $\omega$ with $\omega(x,y)$ of order $|x-y|^p$ for large values of $|x-y|$ and of order $|x-y|$ for small values, as in \cite{luo2016exponential} (see also Theorem~\ref{thm:contractionMarkov} and its proof).

On this topic, the case $p=1$ has a very particular status. Indeed, in this situation,~\eqref{eq:contraction_def} has been established in very general situations using Markovian couplings based on reflection or hybrid reflection/synchronous couplings, with seminal papers by Eberle and then with co-authors in the elliptic case \cite{Eberle2,Eberle1} and the kinetic one~\cite{EberleGuillinZimmer}, and many variations and applications \cite{EberleMajka,DiscreteSticky,schuh2024global}. For this reason, the present work   focuses on the case $p>1$ (although we will be interested in the case $p=1$ as an intermediary step to obtain a result for other $p$'s, see Section~\ref{subsec:non-asymp}).  A case of particular interest is $p=2$ for the following reasons among others: using Wang-Harnack inequalities, bounds in terms of $\mathcal W_2$ can often be strengthen to estimates in relative entropy, see e.g. \cite{rockner2010log,MonmarcheBruit} in the elliptic case and \cite{guillin2012degenerate,MonmarcheGuillinVFP} in the kinetic case; the Langevin process  can be interpreted as a gradient descent of the relative entropy with respect to the $\mathcal W_2$ metric, in the sense of gradient flows in metric spaces \cite{ambrosio2005gradient}; a contraction~\eqref{eq:contraction_def} with $p=2$ is equivalent by \cite{Kuwada1} to gradient bounds
\begin{equation}
\label{eq:naPt2}
|\na P_t f|^2 \leqslant C e^{-\lambda t} P_t(|\na f|^2)\,,
\end{equation}
for all Lipschitz functions $f$, which can be used to establish a Poincaré and log-Sobolev  inequalities for $\mu_\infty$ \cite{MonmarcheBruit} (see also Remark~\ref{rem:LSI}), to show a cut-off phenomenon for the process \cite{salez2025modern} or to construct Lipschitz transport maps from some measures to $\mu_\infty$ \cite{lopez2025bakry}.

 Notice that an important specificity of the case $p=1$ is that it possible to design a semimetric $\omega(x,y)$ equivalent to $|x-y|$ which  is a concave function of $|x-y|$ (for small values). As a consequence, considering two processes $X$ and $Y$, stochastic fluctuations of $|X_t-Y_t|$ (for small values) tend to decrease $\omega(X_t,Y_t)$ on average. By contrast, any semimetric equivalent to $|x-y|^p$ with $p>1$ for small values should be strongly convex (at least, on average) for small values of $|x-y|$, leading stochastic fluctuations of $|X_t-Y_t|$ to increase the average of $\omega(X_t,Y_t)$. 

This is not an issue when considering a synchronous coupling, namely two solutions of~\eqref{eq:EDS_X} driven   by the same Brownian motion with different initial condition, since in that case $|X_t-Y_t|$ has quadratic variation zero. In particular, this shows that~\eqref{eq:contraction_def} holds (simultaneously for all $p\geqslant 1$) with $C=1$ and $\lambda = \lambda^*$ where
\begin{equation}\label{eq:Lambda*}
\lambda^* =  -\sup\{v\cdot \na b(x) v, x\in\R^d,v\in\Sd\}
\end{equation}
 (with $\na b$ the Jacobian matrix of $b$), to which we refer as the Bakry-Emery curvature, see \cite{MonmarcheContraction}. This is in fact an equivalence: if~\eqref{eq:contraction_def} holds with $C=1$ for some $p\geqslant 1$, then it holds for all $p\geqslant 1$ (with $C=1$ and the same $\lambda$) and necessarily $\lambda\leqslant \lambda^*$, as a consequence of \cite[Theorem 1]{MonmarcheContraction} (see also Proposition~\ref{prop:tempscourt} below).  More generally, working up to a linear change of variable, using a synchronous coupling also shows that~\eqref{eq:contraction_def} holds (for all $p\geqslant 1$) for some $C_M\geqslant 1$ with $\lambda_M^* := \inf\{-v\cdot M\na b(x) v, x,v\in\R^d, v\cdot M v=1\}$ for any positive definite symmetric matrix $M$.  However, this is a contraction only if $\lambda_M^*>0$ for some $M$ (we refer to this as a global dissipativity condition, following e.g. \cite{M72} and references within) and this is a very restrictive condition. For instance, if $b(x)=-\na U(x)$ with a non-convex $U$, considering $x_0\in\R^d$ such that $\na^2 U(x_0)$ has a negative eigenvalue $-\eta_0$  then $\lambda_M^* \leqslant -\eta_0 < 0$ for any $M$ (in fact when $\na b$ is symmetric there is no gain in taking $M$ different from the identity, but this is useful in the general case, see \cite{MonmarcheContraction} and the non-elliptic examples in Section~\ref{sec:applications}).

\medskip

In summary, the objective of the present work is to provide criteria to establish a contraction~\eqref{eq:contraction_def1} which apply with $p>1$ without a global dissipativity condition. Ideally, we want constructive non-asymptotic bounds, and not only qualitative results stating that $\beta_p(t)$ vanishes as $t\rightarrow \infty$.

\medskip

We addressed previously this question in \cite{MonmarcheBruit}, which to our knowledge is the only work which provides  results of this type (see also \cite{LMM} for the adaptation of \cite{MonmarcheBruit} to Euler schemes of~\eqref{eq:EDS_X}). The method of~\cite{MonmarcheBruit} is to consider a synchronous coupling, but to work with a semimetric of the form $\omega(x,y) = (1+\theta(x)+\theta(y))|x-y|^2 $ for some suitable positive bounded $\theta$, instead of a Euclidean metric or a concave modification of the latter as in the globally dissipative and the $p=1$ cases. The present work also focuses on synchronous couplings but the analysis is based on different arguments, which allow to treat non-elliptic processes (by contrast to \cite{MonmarcheBruit} which  only covers cases with non-singular $\sigma$). Moreover, while~\cite{MonmarcheBruit} only establishes upper-bounds on the contraction rate, we identify a Lyapunov exponent which exactly describe whether contraction occurs along synchronous couplings and we provide several lower bounds on the contraction rate. Besides, the rate of \cite{MonmarcheBruit} is based on a rough non-local lower-bound on the local curvature $k(x) $ while we use a sharper local bound $\eta(x)$ given in~\eqref{eq:eta}.

\medskip

Let us summarize our contributions:

\begin{enumerate}
\item In Proposition~\ref{prop:COntractionkappap}, we establish essentially that the worse contraction rate along a synchronous coupling is always attained for arbitrarily close initial conditions, which means that the question boils down to the study of the (sign of the) Lyapunov exponent of a linearized problem (cf.~\eqref{eq:ContractLyapunov}).
\item We establish two ``negative" results, namely that, under basic conditions, it is not possible to have better than the Bakry-Emery curvature if we want a result which works for small times (see Proposition~\ref{prop:tempscourt}) or simultaneously for all $p \geqslant 1$ (see Proposition~\ref{prop:negativeelliptic}).
\item Theorem~\ref{thm:FerreStoltz} gives a criterion of long-time contraction along synchronous couplings in terms of the sign of a suitable cumulant, given either in a variational form or as the leading eigenvalue of a Feynman-Kac operator. This pave the way to numerical estimations, as illustrated in Section~\ref{sec:numerique}.
\item Under more restrictive conditions (which, in the case $b=-\na U$ for instance, are allowing $\na^2 U$ to have negative eigenvalues but small with respect to the other constants in the problem), more  quantitative convergence rates are given in Theorem~\ref{thm:final}. To get this result, we first establish a general result (Theorem~\ref{thm:contractionMarkov}), of interest by itself, based on Markovian (hybrid reflection/synchronous) couplings in the spirit of \cite{EberleGuillinZimmer}, which gives a contraction for a modified cost and then for $\mathcal W_1$. This induces concentration inequalities which are then used to control directly the cumulant.
\end{enumerate}

The rest of this work is organized as follows. The settings are detailed in Section~\ref{sec:generalresult}, where the main general result (Proposition~\ref{prop:COntractionkappap}), which relates the Wasserstein contraction to the study of a Lyapunov exponent, is stated. The negative results are stated in Section~\ref{subsec:negativeResults}. Section~\ref{sec:criteria} is devoted to the analysis of a cumulant whose negativity implies an $L^p$ Wasserstein contraction. More precisely, relying on large deviation results, Section~\ref{subsec:resultFerreStoltz} gives a characterization of this cumulant in terms of a variational problem (this is Theorems~\ref{thm:FerreStoltz}), which is used for numerical simulations in Section~\ref{sec:numerique}, while Section~\ref{subsec:non-asymp} present the constructive result based on an explicit Markovian coupling (Theorem~\ref{thm:contractionMarkov}) and its consequence for $L^p$ contraction (Theorem~\ref{thm:final}). Examples of applications are provided in Section~\ref{sec:applications}, illustrating the limitations of the general results when addressing standard non-elliptic processes. The proofs of the general results are gathered in Section~\ref{sec:proofGeneral}, while Section~\ref{sec:proofCoupling} gathers the proofs based on the constructive coupling. Finally, several questions which seems interesting to us are raised along the analysis and left open, and we repeat them as a conclusion in Section~\ref{sec:conclusion}.

\subsection{Settings and a general result}\label{sec:generalresult} 

The following conditions are enforced throughout the work:

\begin{assumption}[Basic conditions]\label{Assum:basic}
The drift  $b$ is in $\mathcal C^2(\R^d,\R^d)$ with bounded second-order derivatives, and there exists $L>0$ such that $(x-y)\cdot (b(x)-b(y)) \leqslant L|x-y|^2$ for all $x,y\in\R^d$.
\end{assumption}

These conditions ensure the strong well-posedness of~\eqref{eq:EDS_X}. The condition on the second-order derivatives of $b$ is made for simplicity and could be removed in many situations by approximation, as $\|\na^2 b\|_\infty$ will not be involved in the final estimates. The existence of $L$ is equivalent to the fact that $\lambda^*>-\infty$ (with the Bakry-Emery curvature given in~\eqref{eq:Lambda*}).

 For a given  solution $X=(X_t)_{t\geqslant 0}$ of~\eqref{eq:EDS_X} and $t\geqslant 0$ we write $\Phi_t^X$ the $d\times d$ random matrix such that for all $v\in\Sd$, $\Phi_t^X v = V_t$ where $(V_s)_{s\geqslant 0}$ solves 
\begin{equation}
\label{eq:EDO_Phi}
\dd V_t  = \na b(X_t) V_t \dd t\,,\qquad V_0=v \,.
\end{equation}
 For $p\geqslant 1$ and $t\geqslant 0$, introduce 
\[\kappa_p(t) = \sup_{x\in\R^d,v\in\Sd}  \po \mathbb E_x\po |\Phi_t^Xv|^p\pf\pf^{1/p}\,.\]
By direct computations, 
\begin{equation}
\label{eq:courbureBE}
\kappa_p(t) \leqslant e^{-  \lambda^* t}
\end{equation}
 for all $t\geqslant 0$ and $p\geqslant 1$, corresponding to the classical Bakry-Emery curvature bound (see Propositions~\ref{prop:tempscourt} and~\ref{prop:sharppinfini} about the sharpness of this bound). By conditionning at time $s\geqslant 0$ and the Markov property,
\begin{equation}
\label{eq:sub-multipliticity}
\kappa_p(t+s) \leqslant \kappa_p(t)\kappa_p(s)\,. 
\end{equation}
By subadditivity, this shows that the $L^p$ Lyapunov exponent
\begin{equation}
\label{eq:Lambdap*}
\Lambda_p^*:= \lim_{t\rightarrow \infty} \frac{1}{t} \ln \kappa_p(t)
\end{equation}
exists (a priori in $[-\infty,-\lambda^*]$) and is equal to $\inf_{t\geqslant 0} \frac{1}{t} \ln \kappa_p(t)$. In particular,
\begin{equation}
\label{eq:ContractLyapunov}
\Lambda_p^* < 0 \qquad \Leftrightarrow\qquad \exists t_*>0,\qquad \kappa_p(t_*)<1\,.
\end{equation}
The relation of this condition with the Wasserstein contraction~\eqref{eq:contraction_def1} is direct by the following:

\begin{proposition}\label{prop:COntractionkappap}
Under Assumption~\ref{Assum:basic},  for all $p\geqslant 1$ and $t\geqslant 0$,
\[\kappa_p^p(t) = \sup_{x,y\in\R^d,x\neq y} \frac{\mathbb E_{(x,y)}\po |X_t-Y_t|^p \pf}{|x-y|^p}\,,\]
where, in the right hand side, $(X_t,Y_t)_{t\geqslant 0}$ are two solutions of~\eqref{eq:EDS_X} (driven by the same Brownian motion $B$) with $(X_0,Y_0)=(x,y)$. In particular, for any $t\geqslant 0$, given two solutions of~\eqref{eq:EDS_X} (with the same Brownian motion),
\[\mathbb E \po |X_t-Y_t|^p\pf \leqslant \kappa_p^p(t) \mathbb E \po |X_0-Y_0|^p\pf\,.\]
As a consequence, for all $p\geqslant 1$ and $t\geqslant 0$,  $\beta_p(t) \leqslant \kappa_p(t)$.
\end{proposition}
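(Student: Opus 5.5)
We prove the two inequalities between $\kappa_p^p(t)$ and the supremum $S_p(t)$ of the right-hand side separately. In both directions the tool is the same: $X$ and $Y$ share the Brownian motion $B$, so the noise cancels in $X_t-Y_t$, and the difference satisfies a random ODE driven by the Jacobian of $b$.

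\emph{Upper bound $S_p(t)\leqslant\kappa_p^p(t)$.} Fix $x\neq y$. Set $Z^s_t$ to be the solution of~\eqref{eq:EDS_X} started at $z_s=y+s(x-y)$, for $s\in[0,1]$, all with the same $B$. Under Assumption~\ref{Assum:basic} the flow is $\mathcal C^1$ in the initial condition, and its derivative is $\partial_s Z^s_t=\Phi_t^{Z^s}(x-y)$. This holds because $\partial_s Z^s$ solves~\eqref{eq:EDO_Phi} along $Z^s$; differentiability can be justified via $\mathcal C^2$ with bounded $\na^2 b$. Hence
\[X_t-Y_t=\int_0^1 \Phi_t^{Z^s}(x-y)\,\dd s.\]
Applying Jensen's inequality (convexity of $|\cdot|^p$) and Fubini gives
\[\mathbb E|X_t-Y_t|^p\leqslant \int_0^1 \mathbb E_{z_s}\po |\Phi_t^{X}(x-y)|^p\pf \dd s\leqslant \kappa_p^p(t)|x-y|^p,\]
where the last step uses that $\mathbb E_{z_s}(|\Phi^X_t v|^p)$ depends only on the starting point $z_s$, and homogeneity in $v$. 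To use Fubini I need measurability in $s$ and integrability. These follow from the deterministic bound $|\Phi_t v|\leqslant e^{Lt}|v|$, which comes from the one-sided Lipschitz condition since $v\cdot\na b\, v\leqslant L|v|^2$.

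\emph{Lower bound $\kappa_p^p(t)\leqslant S_p(t)$.} Fix $x$ and $v\in\Sd$, and take $y_\varepsilon=x+\varepsilon v$. By differentiability of the flow, $(Y^\varepsilon_t-X_t)/\varepsilon\to\Phi_t^Xv$ almost surely as $\varepsilon\to0$. Moreover $|Y^\varepsilon_t-X_t|\leqslant e^{Lt}\varepsilon$ deterministically: apply Itô/chain rule to $|Y-X|^2$, where the noise cancels and the one-sided Lipschitz condition bounds the drift term. Dominated convergence then gives
\[\mathbb E_x|\Phi_t^Xv|^p=\lim_{\varepsilon\to0} \frac{\mathbb E\,|X_t-Y^\varepsilon_t|^p}{\varepsilon^p}\leqslant S_p(t).\]
Taking the supremum over $x$ and $v$ yields the equality.

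\emph{Consequences.} The second claim follows by conditioning on $(X_0,Y_0)$: apply the first bound pointwise, using the strong Markov/flow property of synchronous solutions, then integrate. For $\beta_p(t)\leqslant\kappa_p(t)$, take $\nu=\delta_x$, $\mu=\delta_y$. The synchronous coupling is a transport plan between $\delta_xP_t$ and $\delta_yP_t$, so
\[\mathcal W_p^p(\delta_xP_t,\delta_yP_t)\leqslant \mathbb E|X_t-Y_t|^p\leqslant \kappa_p^p(t)|x-y|^p.\]
The main technical point I expect is rigorous differentiability of the stochastic flow together with the identification $\partial_s Z^s=\Phi^{Z^s}_t$. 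Since $\sigma$ is constant, I will handle this pathwise: $Z^s-B\sigma$ solves a random ODE with $\mathcal C^1$ vector field, so classical ODE differentiability in initial data applies for each fixed trajectory $\omega$.
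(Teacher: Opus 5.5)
Your proof is correct, and its architecture matches the paper's. You interpolate along the segment from $y$ to $x$ to get $\sup_{x\neq y}\mathbb E|X_t-Y_t|^p/|x-y|^p\leqslant\kappa_p^p(t)$, and you perturb $x$ to $x+\varepsilon v$ with $\varepsilon\to0$ for the converse.

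Where you differ is in how each step is carried out. The paper first proves a quantitative linearization estimate (Lemma~\ref{lem:synchronous}): Grönwall applied to $X_t+\delta V_t-Y_t$, using $\|\na^2 b\|_\infty$, shows that this quantity is almost surely at most $c_t|x-y|^2$. Its upper bound then comes from a discrete chain $z_0,\dots,z_N$ with Minkowski's inequality in $L^p$; the $N$ quadratic errors sum to $c_t|x-y|^2/N\to0$. Its lower bound comes from the same lemma with $y=x+\varepsilon v$. You replace all of this with three tools:
\begin{itemize}
\item exact pathwise differentiability of the flow, valid because $\sigma$ is constant, so $Z^s_t-\sigma B_t$ solves a random ODE with a $\mathcal C^1$ field;
\item the fundamental theorem of calculus in $s$, combined with Jensen and Tonelli;
\item dominated convergence, using the deterministic bound $e^{Lt}$.
\end{itemize}

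Your argument is the continuum limit of the paper's chain. It needs only $b\in\mathcal C^1$ with the one-sided Lipschitz condition, so it dispenses with the bound on $\na^2 b$ that the paper imposes for convenience. In exchange, the paper's route gives an explicit, uniform linearization error, which it reuses in the proof of Proposition~\ref{prop:dim1}.

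One small correction: joint measurability in $(s,\omega)$ follows from continuity of $s\mapsto\Phi_t^{Z^s}$, not from the bound $|\Phi_t v|\leqslant e^{Lt}|v|$. That bound only gives integrability, which Tonelli does not even need for a nonnegative integrand.
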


The proof is given in Section~\ref{sec:proofProp1}. In dimension 1 with $p=1$, the bound is sharp:

\begin{proposition}\label{prop:dim1}
When $d=1$, $\kappa_1(t) = \beta_1(t)$. 
\end{proposition}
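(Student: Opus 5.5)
Since Proposition~\ref{prop:COntractionkappap} already gives $\beta_1(t)\leqslant \kappa_1(t)$, it remains to prove $\beta_1(t)\geqslant \kappa_1(t)$. The idea is that in dimension one the synchronous coupling is monotone, hence optimal for $\mathcal W_1$, and that $\kappa_1$ is attained in the limit of nearby initial conditions, which the first identity of Proposition~\ref{prop:COntractionkappap} provides.

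First I will establish order preservation. Let $x<y$ and let $X,Y$ be the solutions of~\eqref{eq:EDS_X} started at $x,y$ and driven by the same Brownian motion. The difference $Z_t=Y_t-X_t$ solves the random ODE $\dd Z_t = (b(Y_t)-b(X_t))\dd t$, with no noise term, because $\sigma$ is constant and cancels. Writing $b(Y_t)-b(X_t) = c_t Z_t$ with $c_t=\int_0^1 b'(X_t+u Z_t)\dd u$, a continuous process, we get $Z_t = Z_0\exp(\int_0^t c_s \dd s)>0$. Hence $X_t<Y_t$ almost surely for all $t$.

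Next I will show that this coupling is optimal. In dimension one, for $\mathcal W_1$, any coupling $(X_t,Y_t)$ of $\delta_xP_t$ and $\delta_yP_t$ with $X_t\leqslant Y_t$ almost surely is optimal. Indeed, $\mathbb E|Y_t-X_t| = \mathbb E(Y_t)-\mathbb E(X_t)$ depends only on the marginals, and for every coupling $\mathbb E|Y-X|\geqslant |\mathbb E(Y-X)|$. This gives $\mathcal W_1(\delta_xP_t,\delta_yP_t)=\mathbb E_{(x,y)}|X_t-Y_t|$. The means are finite thanks to the one-sided Lipschitz condition in Assumption~\ref{Assum:basic}, which gives standard moment bounds.

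Combining these two steps with the identity of Proposition~\ref{prop:COntractionkappap} for $p=1$:
\[\kappa_1(t)=\sup_{x\neq y}\frac{\mathbb E_{(x,y)}|X_t-Y_t|}{|x-y|}=\sup_{x\neq y}\frac{\mathcal W_1(\delta_xP_t,\delta_yP_t)}{|x-y|}=\beta_1(t).\]
The case $x>y$ follows by symmetry. I do not expect a serious obstacle. The only point to check is that Proposition~\ref{prop:COntractionkappap} gives equality, not just the inequality $\beta_1\leqslant\kappa_1$, and it does, as an identity in $p$. If I preferred not to rely on that identity, I would instead let $y\to x$: dividing $Z_t$ by $y-x$ converges to $\Phi^X_t\cdot 1$, and Fatou's lemma gives $\mathbb E_x|\Phi_t^X|\leqslant\liminf_{y\to x}\mathcal W_1(\delta_xP_t,\delta_yP_t)/|y-x|\leqslant \beta_1(t)$. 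Taking the supremum over $x$ then yields $\kappa_1(t)\leqslant\beta_1(t)$ directly.
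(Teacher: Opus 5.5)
Your proof is correct, and it takes a somewhat different route from the paper's. Both arguments rest on the same two ingredients: the first-moment lower bound $\mathcal W_1(\mu,\nu)\geqslant |\int z\,\mu(\dd z)-\int z\,\nu(\dd z)|$, and the fact that in dimension one the synchronous dynamics preserves sign. They organize these differently.

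The paper works infinitesimally:
\begin{enumerate}
\item It bounds $\mathcal W_1(\delta_xP_t,\delta_yP_t)\geqslant|\mathbb E X_t-\mathbb E Y_t|$.
\item It replaces $Y_t-X_t$ by $|x-y|V_t$, up to the error $c_t|x-y|^2$ from Lemma~\ref{lem:synchronous}.
\item It uses $\Phi_t^X=e^{\int_0^t b'(X_s)\dd s}>0$ to get $|\mathbb E V_t|=\mathbb E\,\Phi_t^X$.
\item Letting $y\to x$ and taking the supremum over $x$ gives $\kappa_1\leqslant\beta_1$.
\end{enumerate}
It invokes Proposition~\ref{prop:COntractionkappap} only for the reverse inequality $\beta_1\leqslant\kappa_1$.

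You instead work at finite separation. The exact formula $Y_t-X_t=(y-x)\exp(\int_0^t c_s\dd s)$ makes the synchronous coupling monotone. Hence it is exactly $\mathcal W_1$-optimal, so $\mathcal W_1(\delta_xP_t,\delta_yP_t)=\mathbb E_{(x,y)}|X_t-Y_t|$ for every pair $x,y$. The two-sided identity of Proposition~\ref{prop:COntractionkappap} then closes the argument with no further linearization.

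Your route buys a stronger pointwise statement: the synchronous coupling is optimal for each pair of initial points, not merely in the supremum. This sharpens the observation in Remark~\ref{rem:torus}. The cost is that you delegate all of the linearization, in both directions, to Proposition~\ref{prop:COntractionkappap}.

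Your Fatou variant for $\kappa_1\leqslant\beta_1$ is essentially the paper's argument. However, it replaces Lemma~\ref{lem:synchronous} by pathwise convergence of $(Y_t-X_t)/(y-x)$ to $\Phi_t^X$. So that direction no longer needs the bounded second derivative of $b$.
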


The proof is given in Section~\ref{sec:proofProp1}.

\begin{remark}
A specificity in dimension 1 (when $\sigma>0$) is that the process is always reversible with respect to the measure $\mu_\infty$ with density proportional to $e^{-U}$ with $U(x) = -2\sigma^{-2}\int_0^xb(s)\dd s$. Moreover, as discussed further in Section~\ref{subsec:resultFerreStoltz} (see also \cite{lelievre2024using} and references within) in this situation and when $p=1$, $2\Lambda_1^*/\sigma^2$ is the maximal eigenvalue of the Feynman-Kac operator
\[\mathcal L_{FK} f(x) =  f''(x) - U'(x) f'(x) - U''(x) f(x)\,,\]
which is a self-adjoint negative  operator in $L^2(\mu_\infty)$ since, by integration by parts,
\[\int_{\R^d} g \mathcal L_{FK} f \mu_\infty = -\int_{\R^d} \po g' - U' g \pf \po f' - U' f\pf \mu_\infty\,,\]
for all smooth $f,g$. As a consequence, $\Lambda_1^*\leqslant 0$.
\end{remark}

\begin{remark}\label{rem:torus}
An arguably unexpected (even in dimension $1$) consequence of Propositions~\ref{prop:COntractionkappap} and~\ref{prop:dim1} is that, for $p=d=1$, the optimal contraction rate is always attained for the synchronous coupling of two processes~\eqref{eq:EDS_X}, although known proofs for contraction in non-globally-dissipative situations rely on the reflection coupling (except for~\cite{MonmarcheBruit} but the latter requires $\sigma^2$ to be large enough). Notice that this is clearly false in the periodic torus: it is known in that case that the Brownian motion induces a $\mathcal W_1$-contraction (using e.g. a reflection coupling), while the parallel coupling gives $|X_t-Y_t|=|X_0-Y_0|$ for all $t\geqslant 0$. Whether the Brownian motion (i.e. the heat semigroup) on the torus induces a $\mathcal W_p$-contraction for some $p>1$ is an open question.
\end{remark}

 Considering a lower bound of the local curvature of the drift, namely a function $\eta:\R^d\rightarrow \R$ such that
 \begin{equation}
 \label{eq:eta}
\forall x\in\R^d,\qquad  \eta(x) \geqslant  \sup\{v\cdot\na b(x)v,\ v\in\Sd\}\,,
 \end{equation}
 we immediately get that
 \begin{equation}
 \label{eq:defG}
 \mathbb E_x \po |\Phi_t^X v|^p \pf   \leqslant \mathbb E_{x}\co \exp\po p\int_0^t \eta(X_s) \dd s \pf\cf =: G_p(x,t) \,.
 \end{equation}
This kind of quantities naturally appears in large deviations theory, where $\ln G_p(x,t)$ is referred to as a cumulant. In particular,
\begin{equation}
\label{loc:betakappaG}
\beta_p^p(t) \leqslant \kappa_p^p(t) \leqslant \sup_{x\in \R^d} G_p(x,t) =:  G_p^*(t)\,.
\end{equation}
The objective of the rest of this work is to determine conditions under which the right hand side goes to $0$ as $t\rightarrow \infty$.

In dimension $1$, taking $\eta(x) = b'(x)$,~\eqref{eq:defG} is an equality, and thus $\kappa_p^p(t) = G_p^*(t)$. In higher dimension, it may also be possible to avoid the use of a bound $\eta$ and to study directly $\kappa_p^p(t)$ and get sharp results, by working with a suitable decomposition of the space. This is briefly discussed in Section~\ref{subsec:sharperDecompose} but we won't go into details on this topic, rather focusing in Section~\ref{sec:criteria} on the study of the cumulant $\ln G_p^*$. 

\subsection{Two negative results}\label{subsec:negativeResults}

The classical Bakry-Emery curvature bound gives 
\begin{equation}\label{eq:BEoptimal}
\forall p\geqslant 1,\ \forall t\geqslant 0,\qquad  \beta_p(t) \leqslant \kappa_p(t) \leqslant e^{- \lambda^* t}\,.
 \end{equation}
 The next proposition shows that this bound is in fact optimal for small values of $t$. 
\begin{proposition}\label{prop:tempscourt}
Under Assumption~\ref{Assum:basic}, for all $p\geqslant 1$,
\[\beta_p(t) = 1 -  \lambda^* t + \underset{t\rightarrow 0}o(t)\,.\]
\end{proposition}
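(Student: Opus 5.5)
The upper bound is immediate from~\eqref{eq:BEoptimal}: $\beta_p(t)\leqslant e^{-\lambda^* t} = 1-\lambda^* t + o(t)$. The plan is therefore to prove the matching lower bound $\beta_p(t)\geqslant 1-\lambda^* t - o(t)$. Since $p\mapsto \beta_p(t)$ is non-decreasing, it suffices to treat $p=1$. Moreover $\mathcal W_1(\delta_x P_t,\delta_y P_t)\geqslant |\mathbb E_x(X_t)-\mathbb E_y(Y_t)|$, because $z\mapsto u\cdot z$ is $1$-Lipschitz for $u\in\Sd$. So it is enough to exhibit, for each $\varepsilon>0$, points $x\neq y$ such that for small $t$
\[
\frac{|\mathbb E_x X_t - \mathbb E_y X_t|}{|x-y|}\geqslant 1-(\lambda^*+\varepsilon)t - o(t),
\]
where the $o(t)$ may depend on $\varepsilon$ but not on $t$. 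In other words, we test against a linear function and avoid any coupling argument.

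Fix $\varepsilon>0$. By definition of $\lambda^*$, pick $x_0\in\R^d$ and $v\in\Sd$ with $v\cdot\na b(x_0)v\geqslant -\lambda^*-\varepsilon$. If $\lambda^*=-\infty$ this is excluded by Assumption~\ref{Assum:basic}, since $\lambda^*\geqslant -L$. Take $x=x_0$ and $y=x_0+hv$ with $h>0$ small, to be sent to $0$ before (or jointly with) $t\to0$. Write $X_t = x+\int_0^t b(X_s)\dd s+\sigma B_t$, and similarly for $Y$ with the same $B$. The noise cancels in the difference of expectations, and
\[
v\cdot(\mathbb E Y_t-\mathbb E X_t) = h + \int_0^t \mathbb E\,v\cdot\po b(Y_s)-b(X_s)\pf \dd s.
\]
By the Taylor formula with $\|\na^2 b\|_\infty<\infty$, we get $b(Y_s)-b(X_s)=\na b(X_s)(Y_s-X_s)+O(|Y_s-X_s|^2)$. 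Under synchronous coupling, $|Y_s-X_s|\leqslant e^{Ls}h$ deterministically, by the one-sided Lipschitz bound, and $Y_s-X_s = hv + O(h s)$ since $|b(Y_s)-b(X_s)|$ is bounded by $\sup|\na b|$ along the segment. The last point needs some care, because $\na b$ may be unbounded globally. I would handle it by localizing: on the event $\{\sup_{s\leqslant t}|X_s-x_0|\leqslant 1\}$, whose complement has probability $O(e^{-c/t})$, everything stays in a compact set where $\na b$ is bounded. The complement is then controlled with moment bounds, which follow from the linear growth of $b$.

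On the good event, dividing by $h$ gives
\[
\frac{v\cdot(\mathbb E Y_t-\mathbb E X_t)}{h} = 1+\int_0^t \mathbb E\,v\cdot\na b(X_s)v\,\dd s + O(ht)+O(t^2).
\]
By continuity of $\na b$ and the fact that $X_s\to x_0$ in probability (uniformly bounded on the good event), the integral equals $t\,v\cdot\na b(x_0)v+o(t)$. This yields a ratio of at least $1-(\lambda^*+\varepsilon)t-o(t)-O(ht)$. Letting $h\to0$ first and then $\varepsilon\to0$ gives $\liminf_{t\to0}(\beta_p(t)-1)/t\geqslant-\lambda^*$, which combined with the upper bound proves the claim.

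The main obstacle is the uniformity of the error terms in the localization step, given that $b$ is only one-sided Lipschitz and $\na b$ may be unbounded. Controlling the contribution of the bad event requires the $L^2$ bound $\mathbb E|Y_s-X_s|\leqslant e^{Ls}h$ together with a Cauchy--Schwarz estimate against $\mathbb P(\text{bad})^{1/2}$ and a polynomial moment bound on $|\na b(X_s)|$, which follows from $\na^2 b$ being bounded.
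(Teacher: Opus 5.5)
Your proof is correct. It shares the paper's high-level idea: test $\mathcal W_1$ against a function whose gradient is $v$ near a point $x_0$ where $v\cdot\nabla b(x_0)v$ is almost maximal. The key computation, however, is done differently.

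The paper follows von Renesse--Sturm. It uses that $P_tf$ is $\beta_1(t)$-Lipschitz for a smooth, compactly supported, $1$-Lipschitz $f$ equal to $(y-x_0)\cdot v$ near $x_0$. It then expands $K_\varphi(t)=\int|\nabla P_tf|^2\varphi$ at $t=0$ through the generator and the commutation $\nabla\mathcal Lf=\nabla b\,\nabla f+\mathcal L\nabla f$. This is short and transfers to any setting with a generator calculus, but it relies on the differentiability of $t\mapsto\nabla P_tf$ at $t=0$, which is not spelled out.

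You instead use the global linear test function, for which only the trivial direction of duality is needed, and a pathwise first-order expansion along the synchronous coupling. This needs no semigroup regularity at all, only the integral form of the SDE, Taylor's formula with $\|\nabla^2 b\|_\infty$, and moment bounds. It is essentially the mechanism of the paper's proof of Proposition~\ref{prop:dim1}.

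You can shorten your argument with Lemma~\ref{lem:synchronous}. It gives $h^{-1}v\cdot\mathbb E(Y_t-X_t)\geqslant v\cdot\mathbb E_{x_0}(\Phi_t^Xv)-c_th$. Letting $h\to0$ yields $\beta_1(t)\geqslant v\cdot\mathbb E_{x_0}(\Phi_t^Xv)=1+\int_0^t\mathbb E_{x_0}(v\cdot\nabla b(X_s)\Phi_s^Xv)\,\dd s$. The expansion then follows from $|\Phi_s^Xv|\leqslant e^{Ls}$ together with moment bounds.

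The localization on a good event is also unnecessary. Bounded $\nabla^2 b$ gives $|\nabla b(z)|\leqslant|\nabla b(x_0)|+\|\nabla^2b\|_\infty|z-x_0|$, so everything is controlled by moments of $|X_s-x_0|$.

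One correction: those moment bounds do not come from linear growth of $b$. Assumption~\ref{Assum:basic} does not provide linear growth, and $b$ may grow quadratically. The bounds follow instead from the one-sided Lipschitz condition via It\^o's formula, using $(X_s-x_0)\cdot b(X_s)\leqslant L|X_s-x_0|^2+|b(x_0)||X_s-x_0|$.
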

This is proven in Section~\ref{subsec:negativeproof}.

The second negative result is that, when considering at a given time $t>0$ the $L^p$ contraction of the synchronous coupling for all $p\geqslant 1$ simultaneously, then it is not possible to get better than the Bakry-Emery curvature. For clarity, let us first state it in the elliptic case:

\begin{proposition}\label{prop:negativeelliptic}
Under Assumption~\ref{Assum:basic}, assume furthermore that $\sigma$ is non-singular. Then, for all $t \geqslant 0$,
\[\lim_{p\rightarrow \infty} \kappa_p(t) = e^{-\lambda^*t}\,.\]
\end{proposition}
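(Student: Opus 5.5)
The upper bound $\kappa_p(t)\leqslant e^{-\lambda^* t}$ is already~\eqref{eq:courbureBE}. It therefore suffices to prove $\liminf_{p\to\infty}\kappa_p(t)\geqslant e^{-\lambda^* t}$. We may assume $t>0$, and we treat the case $\lambda^*>-\infty$, which holds by Assumption~\ref{Assum:basic}. Fix $\varepsilon>0$ and pick $x_0\in\R^d$ and $v_0\in\Sd$ with $v_0\cdot\na b(x_0)v_0\geqslant -\lambda^*-\varepsilon$. The idea is that, for $p$ large, the $L^p$ norm of $|\Phi^X_t v|$ sees the essential supremum over trajectories. Since $\sigma$ is non-singular, the process can stay near $x_0$ during $[0,t]$ with positive probability. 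Along such trajectories the linearized flow grows like $e^{(-\lambda^*-\varepsilon)t}$ in a suitable direction.

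The naive choice of a fixed direction $v_0$ does not work, because $v_0$ is not preserved by the flow when $\na b(x_0)$ is not symmetric or $v_0$ is not an eigenvector. I would therefore use the following estimate instead. For $V_s=\Phi^X_s v_0$,
\[
\frac{\dd}{\dd s}|V_s| = \frac{V_s\cdot \na b(X_s)V_s}{|V_s|}.
\]
The Rayleigh quotient $q(x,u)=u\cdot\na b(x)u$ is near-maximal only near $(x_0,v_0)$, so the direction must be kept near $v_0$. To avoid this problem, I would choose the time window short. Write $t=n h$ and use the Markov property at times $kh$, following the argument for~\eqref{eq:sub-multipliticity}: it is enough to bound $\kappa_p(h)$ from below by $e^{(-\lambda^*-2\varepsilon)h}$ for some small $h>0$. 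I then need submultiplicativity in the reverse direction, which fails in general. So instead I would work directly on $[0,t]$ and handle the direction as follows.

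Let $\tilde V_s$ solve the deterministic ODE $\dot{\tilde V}_s=\na b(x_0)\tilde V_s$, $\tilde V_0=v$, and take as initial direction $v$ a unit vector maximizing $|e^{t\na b(x_0)}v|$. Then $|e^{t\na b(x_0)}v| = \|e^{tA}\|$ with $A=\na b(x_0)$, and $\|e^{tA}\|\geqslant e^{t\,\max_u u\cdot A u}$ only holds for small $t$. This is unclear for a fixed $t$.

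To resolve this, I would use the control/support argument along a time-dependent path. Consider a deterministic matrix path $A_s=\na b(\phi_s)$ for a smooth path $\phi$ starting at $x$. Since $\sigma$ is invertible, the Stroock–Varadhan support theorem gives $\mathbb P_x(\sup_{s\leqslant t}|X_s-\phi_s|<\delta)>0$ for every smooth $\phi$. On that event, Grönwall together with the bounded second derivatives of $b$ gives $|\Phi^X_t v-\Phi^\phi_t v|\leqslant C\delta$. Hence
\[
\liminf_{p\to\infty}\kappa_p(t)\geqslant \sup_{x,\phi,v}|\Phi^\phi_t v|-C\delta,
\]
because $\|Z\|_{L^p}\to\|Z\|_{L^\infty}$, and $\|Z\|_{L^\infty}\geqslant z$ whenever $\mathbb P(Z\geqslant z)>0$.

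It remains to find $\phi$ with $|\Phi^\phi_t v|\geqslant e^{(-\lambda^*-\varepsilon)t}$, and this is the main obstacle. Take $\phi$ constant equal to $x_0$ and write $A=\na b(x_0)$. I would reduce to showing that, over a short time $h$, the growth rate starting from $v_0$ is at least $-\lambda^*-2\varepsilon$. After that I need to rotate the direction back to $v_0$. If $\na b$ varied, I could try to exploit other points $x$ where the near-maximizing vector is the current direction. This need not be possible in general, so I would instead accept $|\Phi^\phi_t v|\geqslant \|e^{tA}\|$ for a short $t$ and then iterate. The iteration requires an extra feature: the distance bound should actually be applied to general initial $v$, since $\kappa_p$ takes a sup over $v$. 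On a constant path, however, the direction drifts. I therefore expect the honest route to be to define the deterministic quantity
\[
\rho(t)=\sup_{x,\phi,v}|\Phi^\phi_t v|
\]
and show $\rho(t)=e^{-\lambda^* t}$. One inequality is Grönwall. For the other, note that $\rho$ is supermultiplicative: concatenate paths, and take $v$ at the second stage equal to the normalized output of the first stage, using the freedom to move $\phi$ to a new point. This requires the sup in $\lambda^*$ to be nearly attained in every direction, which fails. So I would use $\rho(h)\geqslant 1-\lambda^* h-o(h)$, which follows as in Proposition~\ref{prop:tempscourt}, together with supermultiplicativity restricted to a continuous-time control. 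In that control the direction $u_s=V_s/|V_s|$ is steered by choosing $\phi_s$; this needs $\na b$ to realise a near-maximal Rayleigh quotient along $u_s$. Pinning down this step, or otherwise showing $\lim_{p\to\infty}\kappa_p(t)$ is a stochastic-control value equal to $e^{-\lambda^* t}$, is where I expect the real difficulty.
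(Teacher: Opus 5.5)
Your reduction is sound and is the same as the paper's. For non-singular $\sigma$, the support theorem, Grönwall on a tube around a path $\phi$, and $\|\cdot\|_{L^p}\to\|\cdot\|_{L^\infty}$ give $\lim_{p\to\infty}\kappa_p(t)\geqslant\sup_{\phi,v}|\Phi^\phi_t v|$. This is Proposition~\ref{prop:sharppinfini} with $\overline{\kappa}_\infty(t)$; note that $C$ depends on $\phi$, so let $\delta\to0$ before taking the supremum. The proposal stops there, however. You do not establish the constant-path bound via $\|e^{tA}\|$, nor the supermultiplicativity or steering of $\rho$. So $\sup_\phi|\Phi^\phi_t|\geqslant e^{-\lambda^*t}$ is never proved, and as written you have no proof.

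That step cannot be filled, because it is false in this generality, and so is the proposition. Take $d=2$, $\sigma=\mathrm{Id}$ and $b(x)=Ax$ with
\[
A=\begin{pmatrix}-1&4\\0&-1\end{pmatrix},\qquad \frac{A+A^T}{2}=\begin{pmatrix}-1&2\\2&-1\end{pmatrix}.
\]
Assumption~\ref{Assum:basic} holds with $L=1$, and $\lambda^*=-1$. Since $\na b\equiv A$, every trajectory has $\Phi^X_t=e^{tA}=e^{-t}\big(\mathrm{Id}+t(A+\mathrm{Id})\big)$. Hence, for every $p$,
\[
\kappa_p(t)=\|e^{tA}\|=e^{-t}\big(2t+\sqrt{1+4t^2}\big)<e^{t}=e^{-\lambda^*t}\qquad\text{for all } t>0,
\]
because $\ln(u+\sqrt{1+u^2})<u$ for $u>0$. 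In fact $\kappa_p(t)\to0$ as $t\to\infty$. This is consistent with the paper's own remark that a linear change of metric can give $\lambda^*_M>0$.

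In this example all paths produce the same $\Phi^\phi_t$, so $\rho(t)=\|e^{tA}\|$ and no choice or steering of paths can help. The paper closes its argument with constant paths at $x_0$, claiming $\overline{\kappa}_\infty(t)\geqslant e^{-\lambda^*t}$. That claim silently uses $\|e^{t\na b(x_0)}\|\geqslant e^{t\sup_v v\cdot\na b(x_0)v}$, which is exactly the inequality you doubted; in general it only matches to first order as $t\to0$.

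The inequality does hold when $\na b(x_0)$ is symmetric, or more generally normal. Then a top eigenvector $v_0$ is preserved by the flow, and $|e^{t\na b(x_0)}v_0|=e^{t\,v_0\cdot\na b(x_0)v_0}$. So under an extra hypothesis such as $b=-\na U$, your argument finishes in one line with this choice of $v_0$. In general, what is true is only
\[
\lim_{p\to\infty}\kappa_p(t)\geqslant\overline{\kappa}_\infty(t)\geqslant\sup_x\|e^{t\na b(x)}\|.
\]
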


This is a particular case of the next proposition, which covers non-elliptic cases. Introduce the set of admissible paths at time $T>0$ as
\[\mathcal A(T) = \left\{\gamma\in\mathcal C([0,T],\R^d):\ \forall \varepsilon>0,\ \mathbb P_{\gamma(0)}\po |X_t-\gamma(t)|\leqslant \varepsilon\ \forall t\in[0,T]\pf >0 \right\}\,.\]
When  $\sigma$ is non-singular, this is simply $\mathcal C([0,T],\R^d)$. Let
\[\overline{\kappa}_\infty(T) = \sup\{|\Phi_T^{\gamma}|,\ \gamma\in\mathcal A(T)\},\]
where $\Phi_t^\gamma $ is the matrix solution to  $\partial_t \Phi_t^\gamma = \na b(\gamma(t)) \Phi_t^\gamma$ with $\Phi_0^\gamma=\mathrm{Id}$. 

\begin{assumption}\label{assum:controllable}
The process~\eqref{eq:EDS_X} is controllable, in the sense that for all $x,y\in\R^d$, and $t,\varepsilon>0$, $\mathbb P_x\po|X_t-y|\leqslant \varepsilon\pf >0$.
\end{assumption}

\begin{proposition}\label{prop:sharppinfini}
Under Assumptions~\ref{Assum:basic} and \ref{assum:controllable}, assume furthermore either one of these two conditions:
\begin{enumerate}
\item $\na b$ is bounded.
\item For any $x,y\in\R^d$, there exists $R>0$ such that for all  $t,\varepsilon>0$,
\[\mathbb P_x\po|X_t-y|\leqslant \varepsilon\, \text{and}\, |X_s|\leqslant R\ \forall s\in[0,t]\pf >0\,.\]
\end{enumerate}
 Then, for any $t\geqslant 0$, $x\in\R^d$ and $v\in\Sd$,
\[\lim_{p\rightarrow \infty} \sup_{v\in\Sd} \co \mathbb E_x\po |\Phi_t^X v|^p\pf\cf^{1/p} \geqslant \overline{\kappa}_\infty(t)\,.\]
In particular, $\lim_{p\rightarrow\infty} \kappa_p(t) \geqslant \overline{\kappa}_\infty(t)$.
\end{proposition}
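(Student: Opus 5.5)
Fix $t>0$, $x\in\R^d$, and a path $\gamma\in\mathcal A(t)$ with $\gamma(0)=x$ (the case $\gamma(0)\neq x$ is treated last). Fix a unit vector $v$ with $|\Phi_t^\gamma v|=|\Phi_t^\gamma|$. The idea is that $L^p$ norms converge to the essential supremum as $p\to\infty$. So it suffices to show that, for every $\delta>0$, the event $\{|\Phi_t^Xv|\geqslant |\Phi_t^\gamma v|-\delta\}$ has positive $\mathbb P_x$-probability. Indeed, if this event has probability $q_\delta>0$, then
\[\co\mathbb E_x\po|\Phi_t^Xv|^p\pf\cf^{1/p}\geqslant q_\delta^{1/p}\po|\Phi_t^\gamma|-\delta\pf\ \underset{p\to\infty}{\longrightarrow}\ |\Phi_t^\gamma|-\delta.\]
The limit in $p$ exists because the quantity is non-decreasing in $p$ by Jensen. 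Taking $\delta\to0$ and then the supremum over $\gamma$ gives the claim.

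The positive-probability event comes from continuity of the linearized flow with respect to the path. For a continuous path $\omega$, let $\Phi^\omega$ solve $\partial_s\Phi^\omega_s=\na b(\omega(s))\Phi^\omega_s$ with $\Phi^\omega_0=\mathrm{Id}$. On the event $E_\varepsilon=\{|X_s-\gamma(s)|\leqslant\varepsilon\ \forall s\in[0,t]\}$, the path $X$ stays in the compact set $K=\{y:\ d(y,\gamma([0,t]))\leqslant 1\}$ for $\varepsilon\leqslant1$. There $\na b$ is bounded by some $M_K$ and Lipschitz with constant $\|\na^2 b\|_\infty$ (Assumption~\ref{Assum:basic}). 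Grönwall applied to $\Phi^X_s-\Phi^\gamma_s$ gives
\[|\Phi_t^X-\Phi_t^\gamma|\leqslant t\,\|\na^2b\|_\infty\,\varepsilon\, e^{2M_Kt}.\]
This is smaller than $\delta$ for $\varepsilon$ small. Since $\mathbb P_x(E_\varepsilon)>0$ by the definition of $\mathcal A(t)$, we can take $q_\delta=\mathbb P_x(E_\varepsilon)$. Note that this step does not use Assumption~\ref{assum:controllable}.

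The remaining issue is that the statement fixes the starting point $x$, while admissible paths may start elsewhere. This is where controllability and conditions 1 or 2 enter, and it is the main obstacle. For $\gamma\in\mathcal A(t)$ starting at $y$, take $s\in(0,t)$ small, and consider the event that $X$ goes from $x$ to within $\varepsilon'$ of $y$ in a short time $h$. This event has positive probability by Assumption~\ref{assum:controllable}. Then use the Markov property and follow a time-compressed version of $\gamma$ on the remaining interval. Two points need checking:
\begin{itemize}
\item The Jacobian accumulated during the transit phase must stay close to the identity. Under condition 1 this follows from $|\Phi_h^X-\mathrm{Id}|\leqslant e^{\|\na b\|_\infty h}-1$. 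Under condition 2 the transit can be taken inside a ball of radius $R$, on which $\na b$ is bounded, with the same conclusion.
\item The starting-point perturbation must be handled. Conditioning on $X_h$ near $y$, the tube event around $\gamma$ started from a nearby point still has positive probability. I would justify this through continuity of the flow in initial conditions under the synchronous coupling (Proposition~\ref{prop:COntractionkappap}-type moment bounds on $|X-Y|$).
\end{itemize}
Letting $h\to0$ and using continuity of $\Phi^\gamma$ in time (rescaling $\gamma$ to $[0,t-h]$) then yields the bound $\overline\kappa_\infty(t)$ itself. Finally, $\kappa_p(t)$ is the supremum of the left-hand side over $x$ and $v$, which gives the last assertion.
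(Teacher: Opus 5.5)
Your treatment of paths with $\gamma(0)=x$ is correct. On the tube event, Gr\"onwall controls $|\Phi^X_t-\Phi^\gamma_t|$, and the $L^p$-to-essential-supremum argument then closes that case.

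\textbf{The gap.} It lies in the case $\gamma(0)\neq x$. There you transit to $\gamma(0)$ during $[0,h]$ and then follow a \emph{time-compressed} copy $\tilde\gamma(s)=\gamma(cs)$, $c=t/(t-h)>1$. Nothing guarantees that $\tilde\gamma$ is admissible. The set $\mathcal A(T)$ is introduced precisely because, for singular $\sigma$, tube events around most continuous paths have probability zero, and admissibility is not invariant under time reparametrization.

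Take the kinetic Langevin process~\eqref{eq:kinLangevin}. Admissible paths $(q,p)$ must satisfy $q(u)-q(0)=\int_0^u p$, so the compressed path satisfies $\tilde q(u)-\tilde q(0)=c\int_0^u\tilde p$. Staying within $\varepsilon$ of $\tilde\gamma$ then forces $(c-1)\bigl|\int_0^u\tilde p(s)\,\dd s\bigr|\leqslant(2+u)\varepsilon$ for all $u$. This fails for small $\varepsilon$ unless $\tilde p\equiv 0$. So your argument only covers the elliptic case, where $\mathcal A$ contains all continuous paths, or degenerate paths such as the constant ones of Remark~\ref{rem:appliPropkin}. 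It misses the non-elliptic situations the proposition is designed for.

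\textbf{The repair.} Do not reparametrize time; use one of two routes.
\begin{enumerate}
\item \emph{Shift.} Follow $\gamma(\cdot-h)$ on $[h,t]$. This is admissible from $\gamma(0)$ by time-homogeneity, since the tube event on $[0,t-h]$ contains the one on $[0,t]$. You obtain $\Phi^\gamma_{t-h}$ and conclude with $h\to0$ by continuity of $s\mapsto\Phi^\gamma_s$.
\item \emph{Aim at the tail, as the paper does.} Send the transit to $\gamma(h)$ rather than $\gamma(0)$, follow $\gamma|_{[h,t]}$, and compare $\Phi^X_t$ directly with $\Phi^\gamma_t$. On $[0,h]$ both $\na b(X_s)$ and $\na b(\gamma(s))$ are bounded by some $M$: globally under condition~1, and on $\mathcal B(0,R)$ under condition~2. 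The discrepancy created there is therefore $O(Mh)$, and Gr\"onwall propagates it.
\end{enumerate}
In either route, handle the starting-point perturbation with the pathwise synchronous bound $|X_s-Y_s|\leqslant e^{L(s-h)}|X_h-Y_h|$, which follows from the one-sided Lipschitz condition. Combine it with the independence of the Brownian increments on $[0,h]$ and $[h,t]$. The moment bounds of Proposition~\ref{prop:COntractionkappap} are not the right tool for producing positive-probability tube events.
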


This is proven in Section~\ref{subsec:negativeproof}. From Proposition~\ref{prop:sharppinfini}, using that Assumption~\ref{assum:controllable} holds when $\sigma$ is non-singular, Proposition~\ref{prop:negativeelliptic} follows by considering 
constant paths $\gamma(s) = x_0$ for $s\in[0,t]$ where $\sup_{v\in \Sd} v\cdot  \na b(x_0) v \geqslant -\lambda^*- \varepsilon$ for an arbitrarily small $\varepsilon>0$, to get that $\overline{\kappa}_\infty(t) \geqslant e^{-\lambda^* t}$.

See Remark~\ref{rem:appliPropkin} for an application of Proposition~\ref{prop:sharppinfini}.

\section{Criteria for long-time contraction} \label{sec:criteria}

This section is devoted to the study of  $G_p^*$ given by~\eqref{loc:betakappaG}, with  $\eta$ satisfying~\eqref{eq:eta}. Thanks to~\eqref{loc:betakappaG}, it is sufficient to show that $G_p^*(t)$  vanishes as $t\rightarrow \infty$ to get the Wasserstein contraction~\eqref{eq:contraction_def} (and even a contraction along synchronous couplings).


\subsection{An asymptotic criterion for contraction}\label{subsec:resultFerreStoltz}
 The long-time behavior of $G_p(x,t)$ for a fixed $x\in\R^d$ is a classical topic of large deviations theory,  related to Varadhan lemma~\cite{burenev2025introduction}. However, since we work in an unbounded space, the supremum over $x$ in~\eqref{loc:betakappaG} has to be carefully controlled. 

We will rely on the general result of Ferré and Stoltz from \cite{FerreStoltz}, which allows both for degenerate diffusion processes and unbounded space. The general conditions from this work are the following:

\begin{assumption}\label{assum:Ferre}
\begin{itemize}
\item The drift $b$ is $\mathcal C^\infty$ with all its derivative growing at most polynomially.
\item The generator $\mathcal L = b\cdot\na + \frac{1}{2}\sigma\sigma^T : \na^2$ satisfies the hypoelliptic Hörmander condition (cf. \cite[Assumption 1]{FerreStoltz}).
\item The process~\eqref{eq:EDS_X} is controllable (cf. \cite[Assumption 2]{FerreStoltz}).
\end{itemize}
\end{assumption}

Finally, \cite{FerreStoltz}  also requires a Lyapunov condition (Assumption 3 there). However, we won't state it since it will be a consequence of the following asymptotic contraction condition, which we will need to deal with the supremum in $x$ in~\eqref{loc:betakappaG}.

\begin{assumption}[Contraction at infinity]\label{Assum:PointwiseContractInfty}
The function $\eta$ satisfies~\eqref{eq:eta} and there exist $R,\rho>0$ such that for all $x\in\R^d,v\in\mathbb S^{d-1}$ with $|x|\geqslant R$, $\eta(x)\leqslant -\rho$ (as a consequence, for any $x\in\R^d,v\in\mathbb S^{d-1}$ with $|x|\geqslant R$, $v\cdot \na b(x) v \leqslant -\rho$). 
\end{assumption}

Building on \cite{FerreStoltz}, our main result in this section is the following:

\begin{theorem}
\label{thm:FerreStoltz}
Under Assumptions~\ref{assum:Ferre} and~\ref{Assum:PointwiseContractInfty}, assume furthermore that $\eta$ is bounded\footnote{This assumption could be weakened to $\eta(x) \ll |x|^2$, cf. \cite{FerreStoltz}} and $\mathcal C^\infty$ with all derivatives growing at most polynomially. Then:
\begin{enumerate}
\item For all $R_0>0$ and $p\geqslant 1$, there exists $C_{R_0}>0$ such that for all $t\geqslant 0$,
\begin{equation}\label{locGCR}
\sup_{x\in \mathcal B(0,R_0)} \left| \frac1t \ln G_p(x,t) - \mathcal J(p\eta)\right| \leqslant \frac{C_{R_0}}{t}\,,
\end{equation}
where, for $f\in L^\infty(\R^d)$, 
\begin{equation}
\label{eq:def-Jpeta}
\mathcal J(f) = \sup_{\nu\in\mathcal P(\R^d)}\left\{\int_{\R^d} f \dd \nu - \mathcal I(\nu)\right\}\,,
\end{equation}
with the rate function 
\[\mathcal I(\nu) = \sup_{u\in \mathcal D_+(\mathcal L)}\left\{ -\int_{\R^d} \frac{\mathcal L u}{u} \dd \nu \right\}\,,\]
 where, denoting by $\mathcal D(\mathcal L)$ the extended domain of the generator $\mathcal L$ (cf. \cite[Section 3.1]{FerreStoltz}) and $\mathcal C(\R^d)$ the space of continuous functions,
 \[\mathcal D_+(\mathcal L) = \left\{ u \in \mathcal D(\mathcal L) \cap \mathcal C(\R^d), \ u>0,\ \frac{\mathcal L u}{u} \in L^\infty(\R^d)\right\}\,.\]
\item For all $p\geqslant 1$, there exists $C>0$ such that for all $t\geqslant 0$,
\begin{equation}
\label{eq:limsupGJpeta}
  G_p^*(t)  \leqslant C e^{ t \max\po -\rho, \mathcal J(p\eta)\pf}  \,.
\end{equation}
If, moreover, there exists $K>0$ such that $|b(x)|\leqslant K|x|+K$ for all $x\in\R^d$, then for all $p\geqslant 1$, there exists $c>0$ such that for all $t\geqslant 0$,
\begin{equation}
\label{e:liminfGJpeta}
   G_p^*(t) \geqslant c  e^{ t \max\po -\rho', \mathcal J(p\eta)\pf}\,,
\end{equation}
with $\rho' = -\liminf_{|x|\rightarrow \infty}\eta(x) \in[\rho,\infty]$.
\end{enumerate}
\end{theorem}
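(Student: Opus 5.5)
We plan to derive part 1 directly from the main theorem of Ferré and Stoltz \cite{FerreStoltz}, and part 2 by splitting trajectories according to whether they stay outside a large ball.

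\textbf{Part 1.} The result of \cite{FerreStoltz} states the following. Assume the hypotheses of Assumption~\ref{assum:Ferre}, together with a Lyapunov function $W\geqslant 1$ satisfying $\mathcal L W\leqslant -\Psi W + C\1_{K}$ for a compact set $K$ and a function $\Psi$ dominating the perturbation. Then the Feynman--Kac semigroup $P^{f}_t g(x)=\mathbb E_x[g(X_t)e^{\int_0^t f(X_s)\dd s}]$ with bounded $f=p\eta$ has a principal eigenvalue equal to $\mathcal J(f)$. It also admits a positive eigenfunction $h$, and one has the convergence $e^{-t\mathcal J(f)}P_t^f g \to h\,\langle \tilde\mu, g\rangle$ in a weighted norm $\|\cdot/W\|_\infty$ at an exponential rate. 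We will check the Lyapunov condition as follows. Assumption~\ref{Assum:PointwiseContractInfty} gives $v\cdot\na b(x)v\leqslant-\rho$ for $|x|\geqslant R$, and together with bounded $\na b$ on the ball this yields $x\cdot b(x)\leqslant -\rho|x|^2/2 + C$. For the candidate $W(x)=e^{a|x|^2}$ with $a$ small, this gives $\mathcal L W/W\leqslant -c|x|^2+C$, which dominates the bounded $p\eta$. Taking $g=1$ then gives $e^{-t\mathcal J(p\eta)}G_p(x,t)\in[c_{R_0},C_{R_0}]$ uniformly on $\mathcal B(0,R_0)$ for $t$ large, because $h$ is continuous and positive and $W$ is bounded on the ball. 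Small $t$ is handled trivially since $\eta$ is bounded. Taking logarithms gives \eqref{locGCR}.

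\textbf{Part 2, upper bound.} Fix $x$ and let $\tau$ be the hitting time of $\mathcal B(0,R_1)$ for some $R_1\geqslant R$. Before $\tau$ the path stays where $\eta\leqslant-\rho$. By the strong Markov property,
\[G_p(x,t)\leqslant e^{-p\rho t}+\mathbb E_x\co \1_{\tau\leqslant t}e^{-p\rho\tau}G_p(X_\tau,t-\tau)\cf\leqslant e^{-p\rho t}+\sup_{s\leqslant t}e^{-p\rho s}C_{R_1}e^{(t-s)\mathcal J(p\eta)}\,,\]
using part 1 with $R_0=R_1$, since $|X_\tau|=R_1$. The right-hand side is at most $C e^{t\max(-p\rho,\mathcal J(p\eta))}$. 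The exponent stated in the theorem is $\max(-\rho,\mathcal J(p\eta))$; since $-p\rho\leqslant-\rho$ and the bound only needs to hold up to a constant, we obtain \eqref{eq:limsupGJpeta}. The supremum over $x$ is thus controlled uniformly.

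\textbf{Part 2, lower bound.} First, $G_p^*(t)\geqslant G_p(0,t)\geqslant c\,e^{t\mathcal J(p\eta)}$ by part 1. For the other term, take $\varepsilon>0$ and points $x_n$ with $|x_n|\to\infty$ and $\eta(x_n)\geqslant-\rho'-\varepsilon$. Linear growth of $b$ and the local Lipschitz bound keep the process within distance $\delta$ of $x_n$ over a time of order $c\ln|x_n|$, or over a time of order $1$ for a fixed $\delta$. Here the main obstacle is that $\eta$ is merely bounded with no uniform continuity near infinity, so staying close to $x_n$ need not keep $\eta$ near $\eta(x_n)$. Rather than trying to derive this, we would read the intended statement as a lower bound of the form $c_T e^{-p\rho' T}$ on each fixed time window, or else adopt explicitly the assumption that $\eta$ has bounded derivatives (it is $\mathcal C^\infty$ and bounded in the setting where we apply it). The key estimate we would then establish is $\mathbb P_{x_n}(\sup_{s\leqslant t}|X_s-x_n|\leqslant\delta)\geqslant e^{-Ct}$ with $C$ independent of $n$. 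This should follow from Girsanov's theorem in the elliptic directions together with the linear bound $|b(x)-b(x_n)|\leqslant K'|x-x_n|$. If this fails, we would fall back on a trivial bound using $\eta\geqslant-\|\eta\|_\infty$. This step, obtaining a uniform lower bound far away despite hypoelliptic degeneracy, is the delicate part, since the exponent must be sharp only up to $\varepsilon$ and then optimized.
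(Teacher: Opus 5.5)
Part~1 and the upper bound \eqref{eq:limsupGJpeta} follow the paper's route and are fine: Ferré--Stoltz with the Lyapunov function $e^{a|x|^2}$, then the strong Markov property at the hitting time of a ball. Your remark that you actually get the exponent $\max(-p\rho,\mathcal J(p\eta))$ is correct.

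\textbf{The gap: the far-field half of the lower bound.} You do not prove the far-field half of \eqref{e:liminfGJpeta}; you propose to weaken the statement, add hypotheses, or use an estimate that is false. The obstacle you describe comes from misreading $\rho'$. It is defined through a $\liminf$, so for every $\varepsilon>0$ there is $R_\varepsilon$ with $\eta\geqslant-\rho'-\varepsilon$ on the \emph{whole} exterior $\{|x|\geqslant R_\varepsilon\}$. You therefore need no localization near special points $x_n$, no continuity of $\eta$, and no small-ball estimate. The argument is as follows.
\begin{itemize}
\item One has $G_p(x,t)\geqslant e^{-p(\rho'+\varepsilon)t}\,\mathbb P_x(|X_s|\geqslant R_\varepsilon\ \forall s\leqslant t)$.
\item The linear growth $|b(x)|\leqslant K|x|+K$ is exactly where that hypothesis enters. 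A pathwise Grönwall argument on $X_s-\sigma B_s$ gives $|X_s|\geqslant e^{-Ks}|x|-1-2|\sigma|\sup_{u\leqslant t}|B_u|$ for $s\leqslant t$.
\item Hence the probability above tends to $1$ as $|x|\to\infty$ for each fixed $t$, whatever the degeneracy of $\sigma$. So $G_p^*(t)\geqslant e^{-p(\rho'+\varepsilon)t}$ for every $\varepsilon>0$.
\item Combined with $G_p(0,t)\geqslant e^{t\mathcal J(p\eta)-C_0}$ from Part~1, this gives the lower bound. This is the paper's argument.
\end{itemize}

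\textbf{Why your key estimate fails.} The bound $\mathbb P_{x_n}(\sup_{s\leqslant t}|X_s-x_n|\leqslant\delta)\geqslant e^{-Ct}$ with $C$ independent of $n$ is false even for elliptic noise. The contraction at infinity forces $|b(x_n)|\gtrsim\rho|x_n|$; you derived $x\cdot b(x)\leqslant-\rho|x|^2/2+C$ yourself. Staying in a $\delta$-ball around $x_n$ thus requires the noise to cancel a drift of size of order $|x_n|$. For an Ornstein--Uhlenbeck process this has probability of order $e^{-c|x_n|^2t}$. Linear growth keeps $|X_s|$ large for times of order $\ln|x_n|$; it does not keep $X_s$ near $x_n$. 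Your fallback $\eta\geqslant-\|\eta\|_\infty$ gives the wrong exponent.

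\textbf{Caveat on the exponent.} Since $G_p$ contains $e^{p\int_0^t\eta(X_s)\dd s}$, the far-field bound is $e^{-p\rho' t}$, not $e^{-\rho' t}$. So \eqref{e:liminfGJpeta} as written cannot hold for $p>1$. For example, with $b(x)=-\rho x$ and $\eta\equiv-\rho$ one has $G_p^*(t)=e^{-p\rho t}$, $\mathcal J(p\eta)=-p\rho$ and $\rho'=\rho$. The paper's proof writes $G_p(x,t)\geqslant e^{-(\rho'+\varepsilon)t}\,\mathbb P_x(\cdots)$, silently dropping this factor $p$. The provable statement has $\max(-p\rho',\mathcal J(p\eta))$, which is the version you were implicitly aiming at with $c_Te^{-p\rho'T}$.
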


Moreover, as explained in \cite[Remark 9]{FerreStoltz}, $\mathcal J(f)$ is the largest eigenvalue of the Feynman-Kac operator $L_f=\mathcal L + f$ (see Section~\ref{sec:numerique} on this topic).

The proof of Theorem~\ref{thm:FerreStoltz} is provided in Section~\ref{sec:preuveThemFerreStoltz}.

\begin{remark}
The relation between a Wasserstein contraction and the largest eigenvalue of $L_{p\eta}$ is already discussed in \cite[Section 5]{MonmarcheBruit}. However, the direction sketched there, using a weighted distance, would involved a rougher lower-bound on the curvature ($k$ there instead of $-\eta$ here) and would require the study of the associated eigenvector. This eigenvector is considered and exploited in \cite{FerreStoltz} (whose result we are using), which means that we are indirectly also relying on this study, however we only use it for the control~\eqref{locGCR} on compact sets, which means that we do not have to  understand the behavior of the eigenvector as $|x|\rightarrow \infty$. 
\end{remark}

\begin{remark}\label{rem:Jcaselliptique}
A clearer expression for $\mathcal I(\nu)$ is given in \cite[Corollary 3]{FerreStoltz}. Assume that $\nu = e^{h} \mu_\infty$ with $\mu_\infty$ the invariant measure of $X$ and decompose the generator $\mathcal L = \mathcal L_S + \mathcal L_A$ with, denoting by $\mathcal L^*$ the adjoint operator of $\mathcal L$ in $L^2(\mu_\infty)$, $\mathcal L_S=(\mathcal L+\mathcal L^*)/2$ and $\mathcal L_A = (\mathcal L - \mathcal L^*)/2$. Let $S=\sigma\sigma^T/2$, so that the carré du champs associated to $\mathcal L$, defined by $2\Gamma(f) = \mathcal L(f^2) - f\mathcal L f$, is simply $\na f\cdot S\na f$. Then
\[\mathcal I(\nu) = \frac14 \int_{\R^d} \co \Gamma(h) + \Gamma(\psi_h)\cf \dd \nu \]
where, denoting by $\tilde \na g = -\na g - g \ln \nu $ the adjoint of $\na$ in $L^2(\nu)$, $\psi_h$ is defined as the solution of the Poisson equation
\[\tilde\na \po S \na \psi_h\pf = \mathcal L_A h\,.\]
\end{remark}

\begin{example}\label{ex:Joverdamped}
As an application of Remark~\eqref{rem:Jcaselliptique}, for the overdamped Langevin process
\begin{equation}
\label{eq:overdampedSDE}
\dd X_t = -\na U(X_t) \dd t + \sqrt{2}\theta\dd B_t
\end{equation}
with $\theta>0$, we get the Fisher Information
\[\mathcal I(\nu) = \frac{\theta^2}{4}\int_{\R^d} \left|\na \ln \frac{\nu}{\mu}\right|^2\dd\nu \,.\]
In particular, if $|\na \ln \mu_\infty|^2 = |\na U|^2$ grows polynomially for instance, then $\mathcal I(\nu) < \infty$ for any Gaussian density. By taking $\nu = \mathcal N(x_0,\varepsilon I_d)$ with an $x_0\in\R^d$ such that $\eta(x_0)$ is arbitrarily close to $\sup \eta$ and by taking $\varepsilon$ arbitrarily small, we can have $\int_{\R^d} \eta \nu$ arbitrarily close to $\sup\eta$ while having $\mathcal I(\nu) <\infty$.  From~\eqref{eq:def-Jpeta}, we deduce that $\mathcal J(p\eta)/p \rightarrow \sup\eta$ as $p\rightarrow\infty$.
\end{example}

\begin{remark}
In fact, in the elliptic case~\eqref{eq:overdampedSDE}, a consequence of \cite[Corollary 4]{wu2000deviation} is that, provided the invariant measure $\mu_\infty\propto e^{-U/\theta^2}$ satisfies a log-Sobolev inequality:
\begin{equation}
\label{eq:LSI}
\exists C_{LS}>0,\, \forall \nu \ll \mu_\infty,\qquad \int_{\R^d} \ln \po \frac{\dd\nu}{\dd \mu_\infty}\pf \dd \nu \leqslant \frac{\theta^2 C_{LS}}{4} \int_{\R^d} \left|\na \ln \po \frac{\dd\nu}{\dd \mu_\infty}\pf\right|^2 \dd \nu\,,
\end{equation}
 then for all $t\geqslant 0$ and $p\geqslant 1$, 
 \[\frac1t \ln \int_{\R^d} G_p(x,t) \mu_\infty(\dd x) \leqslant \frac{1}{C_{LS}} \ln \int_{\R^d} e^{C_{LS} p\eta(x)}\mu_\infty(\dd x)\,.\] 
 Lower-bounding the left-hand side by an integral over a compact set and using~\eqref{locGCR} shows that
 \[\mathcal J(p\eta) \leqslant \frac{1}{C_{LS}} \ln \int_{\R^d} e^{C_{LS} p\eta(x)}\mu_\infty(\dd x)\,. \] 
 Alternatively, using Pinsker's inequality and then~\eqref{eq:LSI}, for any $\nu\in\mathcal P(\R^d)$, denoting by $\mathrm{osc}(\eta)= \sup \eta - \inf \eta$ the oscillation of $\eta$ and by $\|\cdot\|_{TV}$ the total variation norm,
 \begin{align*}
\int_{\R^d} p\eta\dd\nu &\leqslant  \int_{\R^d} p \eta \dd \mu_\infty + p\mathrm{osc}(\eta) \|\nu-\mu_\infty\|_{TV} \\
& \leqslant \int_{\R^d} p \eta \dd \mu_\infty + p\mathrm{osc}(\eta) \sqrt{ C_{LS} \mathcal I(\nu)} \,,
 \end{align*}
 from which
 \[\mathcal J(p\eta) \leqslant \int_{\R^d} p \eta \dd \mu_\infty + C_{LS}(p\mathrm{osc}(\eta))^2 /2\,.\]
 With the same reasoning but applying the Talagrand's inequality implied by the log-Sobolev inequality~\cite{otto2000generalization} instead of Pinsker's gives
 \[\mathcal J(p\eta) \leqslant \int_{\R^d} p \eta \dd \mu_\infty + (C_{LS}p L_\eta)^2 \,,\]
 with $L_\eta$ the Lipschitz constant of $\eta$.
\end{remark}

\begin{remark}\label{rem:LSI}
Conversely, in the general case, notice that if $\mathcal J(2\eta) <0$ and Assumption~\ref{Assum:PointwiseContractInfty} holds, then the $L^2$ Wasserstein contraction provided by Theorem~\ref{thm:FerreStoltz}, through its equivalence with the gradient estimate~\eqref{eq:naPt2}, implies a Poincaré inequality for $\mu_\infty$ (see the proof of \cite{MonmarcheBruit}, which requires neither reversibility nor ellipticity). Moreover, Assumption~\ref{Assum:PointwiseContractInfty} implies a so-called defective log-Sobolev inequality (see \cite[Proposition 4]{monmarche2025logarithmic}), which together with the Poincaré inequality implies that $\mu_\infty$ satisfies a log-Sobolev inequality (associated with the classical Fisher Information, and not the rate function $\mathcal I$, and neither the Dirichlet form of the diffusion process, which may be degenerate when $\sigma$ is singular).
\end{remark}

In view of \eqref{loc:betakappaG}, the consequence of Theorem~\ref{thm:FerreStoltz} (specifically,~\eqref{eq:limsupGJpeta}) for our purpose is that, under Assumptions~\ref{assum:Ferre} and~\ref{Assum:PointwiseContractInfty}, a $\mathcal W_p$ contraction holds when $\mathcal J(p\eta) <0$. 

It is often not easy to prove that $\mathcal J(p\eta)$ is negative, but it can be estimated numerically, as detailed in Section~\ref{sec:numerique}. 

In particular, in dimension $1$ for instance, taking $\eta= b'$, then $\kappa_p^p(t) = G_p^*(t)$ and the lower bound~\eqref{e:liminfGJpeta} together with Proposition~\ref{prop:COntractionkappap} shows that synchronous coupling does not induce an $L^p$-contraction when $\mathcal J(p\eta) \geqslant  0$ (in higher dimension, obtaining the same conclusion would require to go into the details of the discussion in Section~\ref{subsec:sharperDecompose}). As seen in Example~\ref{ex:Joverdamped}, for the overdamped Langevin diffusion~\eqref{eq:overdampedSDE}, as soon as $U$ is not convex, then we obtain (in dimension 1) that the synchronous coupling does not induce an $L^p$-contraction for $p$ large enough. Besides, if $U$ is convex but not strongly convex, the contraction rate vanishes as $p\rightarrow \infty$. These remarks are consistent with Proposition~\ref{prop:negativeelliptic}.

\medskip

As mentioned, Theorem~\ref{thm:FerreStoltz} is based on \cite{FerreStoltz}, which crucially relies on a long-time convergence estimate for a Feynman-Kac operator (see in particular Equation (89) in this reference), where the constants are not explicit. This is due to the use of a compactness argument, together with hypoellipticity and controllability, to prove a minorization condition in order to apply Harris theorem \cite{HairerMattingly2008}. In some situations (elliptic or kinetic diffusions for instance), it could be possible to use a reflection coupling or a Wang-Harnack inequality to avoid this compactness argument, and then to go through the proofs in~\cite{FerreStoltz} to get explicit constants in the final result, hence getting an explicit non-asymptotic bound in Theorem~\ref{thm:FerreStoltz} instead of~\eqref{eq:limsupGJpeta} (with an unknown $C$). However this is beyond the scope of the present work. Moreover, as long as the argument relies on the Harris theorem in terms of total variation, it is unclear whether such constructive estimates would have nice dependencies in the dimension $d$, which is often the main parameter of interest (since total variation behaves badly with dimension).

Instead, as an alternative to Theorem~\ref{thm:FerreStoltz}, under more restrictive conditions, in  Section~\ref{subsec:non-asymp} we will give a non-asymptotic bound on $ G_p^*(t)$ (hence $\beta_p(t)$), independent from $d$ (provided all parameters in the assumptions are independent from $d$).

\subsection{Numerical experiments}\label{sec:numerique}



As mentioned above, $\mathcal J(p\eta)$  is characterized as the largest eigenvalue of the Feynman-Kac operator $L_{p\eta}$ given by
\begin{equation}
\label{eq:FKeigen}
L_{p\eta} f(x) = b(x) \cdot \na f(x) + \frac12 \sigma^2 : \na^2 f(x) + p\eta(x) f(x)\,, 
\end{equation}
see \cite{burenev2025introduction,FerreStoltz}. In dimension 1, this can be easily solved numerically. In Figure~\ref{fig:Lyap} we display the result when considering the process
\begin{equation}
\label{locXU}
\dd X_t = - U'(X_t) \dd t + \sqrt{2}\theta \dd B_t
\end{equation}
with multi-well potentials $U\in\{U_1,U_2\}$ given by $U_i(x)=x^2 + 2e^{-x^2} + a_i \cos(10x) $, $i\in\{1,2\}$, $(a_1,a_2)=(0,0.25)$, with $\eta(x)=-U''(x)$.  The eigenvalue problem associated to~\eqref{eq:FKeigen} is truncated over $x\in [-5,5]$ and discretized with a step-size $\Delta x= 10^{-3}$, with 25 values evenly spaced for each parameter $p\in[1,3]$, $\theta^2 \in[0.1,5]$. Tested with $U_0(x)=x^2$ (for which $\mathcal J(p\eta)/p = -2$) these discretization parameters gave an accuracy of order $10^{-6}$ over this range of $p,\theta^2$. Let us make the following comments on Figure~\ref{fig:Lyap}:
\begin{itemize}
\item These potential $U_i$ being strongly convex outside a compact set,  a Wasserstein contraction~\eqref{eq:contraction_def} for $p=1$ follows from~\cite{Eberle1}. Due to Proposition~\ref{prop:dim1} (and since in dimension $1$ here~\eqref{eq:defG} is in fact an inequality), this implies that $\mathcal J(\eta)<0$ for all $\theta$. This is indeed observed in Figure~\ref{fig:Lyap}.
\item  Similarly, the result of \cite{MonmarcheBruit} applies in the present situation, which means (thanks to Proposition~\ref{prop:COntractionkappap})  that  for any $p\geqslant 1$, for a temperature $\theta^2$ large enough, $\kappa_p(t) <1$ for $t$ large enough. This implies that, for any $p$, $\mathcal J(\eta p) <0$ for  $\theta^2$ large enough, as observed in Figure~\ref{fig:Lyap}.
\item Conversely, it appears in Figure~\ref{fig:Lyap} that for any $p>1$, $\mathcal J(p\eta)>0$ when $\theta^2$ is small. Since the numerical approximation performed in the simulation have theoretical convergence guarantees, these simulations could be used to prove rigorously that indeed the real value of $\mathcal J(p\eta)$ for a given $p$ (e.g. $p=2$) is negative for small values of $\theta^2$, although we won't detail this. This shows that the restriction in~\cite{MonmarcheBruit} (which is based on synchronous couplings) to high temperatures is in fact necessary. Moreover, the critical temperature at which $\mathcal J(p\eta)$ changes sign appears to depend on $p$, in consistency with the results in~\cite{MonmarcheBruit}.
\item For any $\theta^2>0$, we see that $\mathcal J(\eta p) >0$ for $p$ large enough, which is a consequence of Example~\ref{ex:Joverdamped}.
\end{itemize}

A natural question in view of these observations is whether there is no $\mathcal W_2$-contraction at low temperature, namely: for~\eqref{locXU} with a non-convex $U$, do we necessarily have that $\beta_2(t) \geqslant 1$ for all $t\geqslant 0$ when $\theta^2$ is small enough ? We shall not answer it in this work and leave it as an open problem.

\begin{figure}
\begin{center}
\includegraphics[scale=0.35]{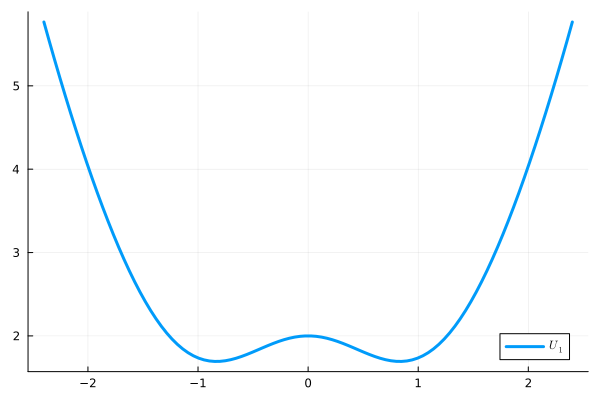}
\includegraphics[scale=0.35]{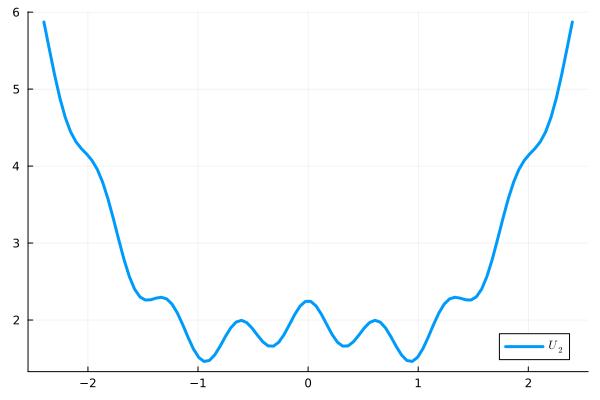}

\includegraphics[scale=0.35]{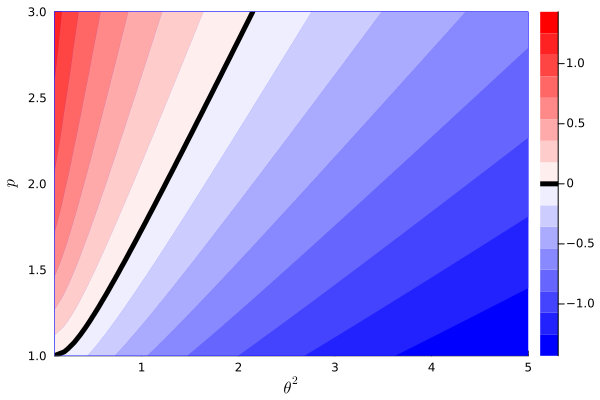}
\includegraphics[scale=0.35]{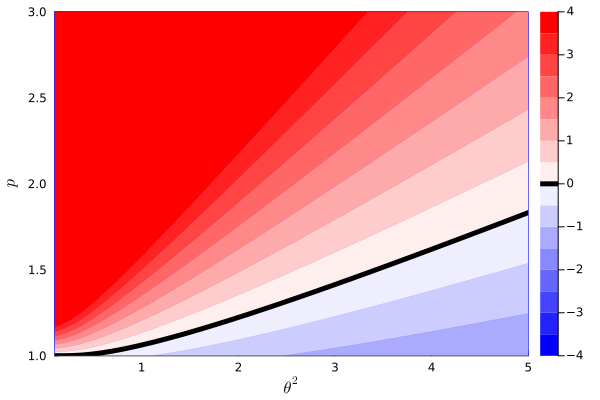}
\caption{Top: potentials $U_1$ (left) and $U_2$ (right). Bottom: estimation of $\mathcal J(p\eta)/p$ for~\eqref{locXU} (left with $U_1$, right with $U_2$), as a function of $\theta^2\in[0.1,2]$ and $p\in[1,3]$ (on the right graph, the color bar is capped in $[-4,4]$).}\label{fig:Lyap}
\end{center}
\end{figure}

\subsection{A constructive non-asymptotic criterion}\label{subsec:non-asymp}

The condition that $\mathcal J(p\eta)<0$ to get a $\mathcal W_p$-contraction thanks to Theorem~\ref{thm:FerreStoltz} is conceptually interesting but difficult to check in practice. In this section, we prove the contraction under more restrictive but explicit conditions.

The first maint step, of interest for itself, is  a general result of Wasserstein long-time contraction for the cost $\omega(x,x')=\max(|x-x'|,|x-x'|^p)$, using explicit Markovian coupling as in e.g. \cite{Eberle1,EberleGuillinZimmer,HerdaPerthameMonmarche} (this is Theorem~\ref{thm:contractionMarkov}) below). In particular, for $p=1$, this gives a $\mathcal W_1$ contraction, which is known to induce concentration inequalities (see \cite{Djellout,CattiauxGuillinFathi} and Section~\ref{subsec:concentration}) and thus provides a control on $G_p^*(t)$, eventually leading to a $\mathcal W_p$-contraction possibly for $p>1$.

Concerning the contraction with the cost $\omega$, the case of elliptic diffusion processes is already covered by \cite{Eberle1} and the case where the drift induces a global contraction is well-known \cite{MonmarcheContraction}. As a consequence, we focus on a general hybrid situation where the state can be decomposed as $x=(y,z)$, with an elliptic noise on  the coordinates $z$  while, considering two synchronously-coupled processes, the coordinates $y$ are contracted by the drift $b$ when the coordinates $z$ are the same (or more generally, when the distance between the $z$ variables is small with respect with the distance between the $y$ variables). This is stated as follows:

\begin{assumption}[State decomposition]\label{assum:decomposeYZ}
The state is decomposed as $x = (y,z)\in\R^{n+m}$ for some $n,m\in\N^*$ and there exist $\rho_1,L_1,L_2,L_3,\theta>0$ such that for all $x=(y,z),x'=(y',z')\in\R^{n+m}$, writing $b(x)=(b_1(x),b_2(x)) \in \R^{n}\times \R^m$,
\begin{equation}
\begin{array}{rcl}
(b_1(x) - b_1(x')) \cdot (y-y') & \leqslant - \rho_1 |y-y'|^2 + L_1 |y-y'||z-z'| \\
(b_2(x)-b_2(x')) \cdot (z-z') & \leqslant L_2|y-y'||z-z'| + L_3|z-z'|^2 \,,
\end{array}\label{locAssumYZ}
\end{equation}
and $\Sigma = \sigma\sigma^T$ is decomposed by blocks as
\begin{equation}
\label{eq:decomposSigma}
\Sigma = \theta^2 \begin{pmatrix}
0 & 0\\
0  & \mathrm{Id}_m
\end{pmatrix} + \tilde \sigma \tilde \sigma^T\,,\qquad \tilde \sigma = \begin{pmatrix}
\tilde \sigma_1 \\ \tilde \sigma_2
\end{pmatrix}  \in \R^{(n+m)^2}\,.
\end{equation}
\end{assumption}

\begin{remark}\label{rem:n0}
It is possible to consider the case where $n=0$; in that case, we are simply back to the settings of~\cite{Eberle1}.
\end{remark}

Such a decomposition already appears in the kinetic Langevin and Morris-Lecar processes considered in~\cite{EberleGuillinZimmer,HerdaPerthameMonmarche}, and our strategy and results will be similar to these references. By comparison with these earlier works (and other alternative studies on the kinetic Langevin case \cite{schuh2024global,cheng2018sharp}), our main objective is to have a flexible result with conditions which are easily checked in practice on various models, as illustrated in Section~\ref{sec:applications}. Moreover, the decomposition~\eqref{eq:decomposSigma} is interesting in the high-diffusivity regime, for instance if  
\[\Sigma = \theta_0^2 \begin{pmatrix}
0 & 0\\
0  & \mathrm{Id}_m
\end{pmatrix}\]
for some $\theta_0>0$ then we may choose any $\theta \in(0,\theta_0]$ in~\eqref{eq:decomposSigma}. Since our results will not depend on the matrix $\tilde \sigma$, we will get convergence rates which are uniform in $\theta_0 \in [\theta,\infty)$ for any $\theta>0$. Similarly, if we consider a sequence of processes where the diffusion matrix $\sigma_n $ converges as $n\rightarrow \infty$ to a degenerate $\sigma$ (for instance, $\sigma_n = \sigma + \frac1n I_d$), then we can have a decomposition~\eqref{eq:decomposSigma} with $\theta^2$ independent from $n$.

Assumption~\ref{assum:decomposeYZ}  is clearly not sufficient by itself to ensure the stability of the process, let alone a Wasserstein contraction. We complement it with the following:

\begin{assumption}[Contractivity at infinity]
\label{assum:Contract_at_infty}
There exist $\rho_2,S^*>0$ and a positive definite symmetric matrix $Q \in \R^{d\times d}$ such that
\[(x-x') \cdot Q \po b(x)-b(x')\pf \leqslant -  \rho_2 \|x-x'\|_Q^2\qquad \forall x,x'\in\R^d,\ \|x-x'\|_Q\geqslant S^*\,,\]
where $\|x\|_Q = \sqrt{x\cdot Q x}$.
\end{assumption}

For a given $Q$, this condition is equivalent to the same condition with $Q=\mathrm{Id}$ but for the process $Q^{1/2} X_t$ instead of $X_t$. The interest is that we can work with different variables to check Assumptions~\ref{assum:decomposeYZ} and \ref{assum:Contract_at_infty}.

Under both Assumptions~\ref{assum:decomposeYZ} and \ref{assum:Contract_at_infty}, we write $\mathfrak{P}'=\{\rho_1,L_1,L_2,L_3,\theta,|Q|,|Q^{-1}|,\rho_2,S^*\}$ the relevant parameters. Again, we emphasize that neither the dimensions $n,m$ nor the matrix $\tilde \sigma$ appear in $\mathfrak{P}'$.

\begin{theorem}
\label{thm:contractionMarkov}
Under Assumptions~\ref{assum:decomposeYZ} and \ref{assum:Contract_at_infty}, for any $p\geqslant 1$, there exist $C_p,\lambda_p>0$ (which depend only and explicitly on $p$ and the parameters in $\mathfrak{P}'$) such that for any $\nu,\mu\in\mathcal P_p(\R^{n+m})$ and $t\geqslant 0$,
\begin{equation}
\label{eq:weightWp}
\mathcal W_{\omega}(\nu P_t,\mu P_t) \leqslant C_p e^{-\lambda_p t}  \mathcal W_{\omega}(\nu ,\mu)\,,
\end{equation}
with $\omega(x,x')=\max(|x-x'|,|x-x'|^p)$. 
\end{theorem}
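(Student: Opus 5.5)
We construct a Markovian coupling $(X_t,X'_t)$ in the spirit of \cite{EberleGuillinZimmer,HerdaPerthameMonmarche} and a semimetric $\rho(x,x')=f(r(x,x'))$ equivalent to $\omega$. We then show $\frac{\dd}{\dd t}\mathbb E\rho(X_t,X'_t)\leqslant -\lambda_p\mathbb E\rho(X_t,X'_t)$, which gives~\eqref{eq:weightWp} with $C_p$ the equivalence constant between $\rho$ and $\omega$.

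\textbf{Coupling.} Split the noise according to~\eqref{eq:decomposSigma}: $\sigma\dd B_t = \theta(0,\dd W_t)+\tilde\sigma\dd \tilde W_t$ with $W,\tilde W$ independent. The $\tilde\sigma$ part is always coupled synchronously, so it has zero quadratic variation on the difference and does not enter the estimates; this is why $\tilde\sigma$ and $n,m$ are absent from $\mathfrak P'$. The $\theta$-part acting on $z$ is coupled by reflection, $\dd W'_t=(I-2e_te_t^T)\dd W_t$ with $e_t=(z-z')/|z-z'|$, whenever $|z-z'|\geqslant \delta r$ for a small $\delta>0$ and $r\leqslant R_1$. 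It is coupled synchronously otherwise, with the usual Lipschitz interpolation of the switch near the boundary, as in \cite{EberleGuillinZimmer}. The distance is taken as
\[
r(x,x')=\alpha|y-y'|+|z-z'|,
\]
with $\alpha>0$ to be tuned. With $\delta$ small, the first line of~\eqref{locAssumYZ} gives $\frac{\dd}{\dd t}|y-y'|\leqslant -\rho_1|y-y'|+L_1|z-z'|$.

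\textbf{Drift of $r$ in the two regimes.} When $|z-z'|<\delta r$, the $y$-part dominates. Choosing $\alpha$ large and $\delta$ small relative to $\rho_1,L_1,L_2,L_3$ gives $\dd r\leqslant -c\, r\,\dd t$, deterministically. When $|z-z'|\geqslant\delta r$, the drift is only bounded by $Cr$, but the reflected noise produces $\dd r = (\ldots)\dd t + 2\theta\,\dd M_t$ with $\dd\langle M\rangle_t=\dd t$. For large $r$, we instead use Assumption~\ref{assum:Contract_at_infty}: for $\|x-x'\|_Q\geqslant S^*$ there is deterministic contraction in the $Q$-norm. The large-distance part of the metric will therefore be built on $\|x-x'\|_Q$ rather than on $r$, and the two are glued on an intermediate annulus using $|Q|,|Q^{-1}|$. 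Concretely, take
\[
\rho=f(r)+\epsilon\,\|x-x'\|_Q^p\,\1\text{-smoothed},\qquad\text{or more simply}\qquad \rho=f(r)+\epsilon\, g(\|x-x'\|_Q),
\]
with $g(s)\approx s^p$ for large $s$ and $g\approx 0$ for small $s$.

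\textbf{Choice of $f$.} Take $f$ concave on $[0,R_1]$, of the Eberle form $f(r)=\int_0^r \varphi(s)g_0(s)\,\dd s$ with $\varphi(s)=\exp(-Cs^2/(2\cdot 4\theta^2\delta^2))$, and linear, then $p$-th-power-like, beyond $R_1$. Itô's formula in the reflection regime gives the drift
\[
f'(r)Cr+2\theta^2 f''(r),
\]
whose second term is negative enough for $r\leqslant R_1$. In the synchronous regime, $f'(r)(-cr)\leqslant -c' f(r)$ since $f$ is concave. For $r$ beyond $R_1$ (with $R_1$ chosen so that $\|x-x'\|_Q\geqslant S^*$ there), the $Q$-part gives the contraction $-p\rho_2\epsilon g$, which absorbs the possible growth $f'(r)Cr\lesssim f(r)$ after fixing the weights. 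Because only concave $f$ is used at small distances, the convexity issue of $|x-x'|^p$ discussed in the introduction never arises, and $\rho\asymp\omega$ holds.

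\textbf{Main obstacle.} The delicate step is the interface. In the reflection regime with $r$ large but below the threshold $\|x-x'\|_Q\geqslant S^*$, the linear term $f'(r)Cr$ is not compensated by $f''$, since $f$ is linear there, nor yet by the $Q$-contraction. Likewise, near $|z-z'|\approx\delta r$ the switch between couplings must not produce a positive drift. I would handle the first issue by taking $R_1\geqslant |Q^{-1}|^{1/2}S^*\cdot(\text{const})$, so that $f$ remains strictly concave on the whole region where no deterministic contraction is available, at the cost of exponentially small constants $\varphi(R_1)$. I would handle the second by the standard trick of keeping reflection whenever $|z-z'|\geqslant \delta r/2$ and interpolating. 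Explicit $\lambda_p,C_p$ then follow by tracking the choices $\alpha,\delta,R_1,\epsilon$, which depend only on $p$ and $\mathfrak P'$.
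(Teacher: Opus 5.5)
There is a genuine gap, and it lies exactly where the paper's proof does its real work. You claim that ``the convexity issue of $|x-x'|^p$ never arises'' because $f$ is concave at small distances. This is not right.

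The quantities $r$ and $s=\|x-x'\|_Q$ are only equivalent, say $\gamma_1 r\leqslant s\leqslant \gamma_2 r$. You choose $R_1$ with $\gamma_1R_1\geqslant S^*$, with a margin. So the reflecting region $\{r\leqslant R_1,\ |z-z'|\geqslant\delta r\}$ contains points with $s>S^*$, for instance $y=y'$ and $|z-z'|$ slightly below $R_1$. At such points $g$ is already increasing and convex. Itô's formula therefore adds a positive correction of order $\epsilon g''(s)\theta^2|Q|$, and your sketch never controls it.

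You cannot avoid this overlap by letting $g$ vanish on the whole noisy region. Just beyond $R_1$, in the synchronous regime with $|z-z'|\geqslant \delta r$, your $f$ is still increasing (linear, then power-like). The drift $f'(r)Cr>0$ must then be absorbed by $\epsilon\rho_2 g'(s)s$ at values of $s$ as small as $\gamma_1 R_1$. This forces $g'>0$ there, hence $g''>0$ somewhere inside the reflecting region. Even with $f$ flat there, you would need $g'>0$ to get strict decay of $f(R_1)+\epsilon g$.

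So a quantitative balance between the Itô term of $g$ and the concavity gain of $f$ is unavoidable. The bound $f'(r)Cr\lesssim f(r)$ you invoke is the wrong comparison, because that gain is only of order $f'(R_1)$, which is exponentially small in $R_1^2$.

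Two smaller points. The ``main obstacle'' you name does not occur in your own construction, since $f$ is concave wherever you reflect. Also, $-crf'(r)\leqslant -c'f(r)$ follows from $f'\geqslant f'(R_1)$ together with $f(r)\leqslant r$. It does not follow from concavity, which gives the opposite inequality $rf'(r)\leqslant f(r)$.

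The paper closes the gap with a three-scale calibration:
\begin{enumerate}
\item The reflection intensity $\alpha(R)$ is switched off on $[R_1^*,R_1^*+\xi]$. The function $f$ is defined through the ODE $-\rho_3 rf'=(L_3+L_1M)rf'+8\alpha^2 f''$. Hence $f'\equiv 0$ and $f$ is bounded wherever there is no noise, so $f$ never produces a positive drift in the synchronous $z$-dominated regime.
\item The convex part has $g''=\varepsilon$, tiny, on $[S^*,S_2^*]$, with $S_2^*\geqslant \gamma_2 R_1^*$. Thus $g''$ is allowed to be large, with growth like $s^p$, only where the noise is certainly off.
\item The Itô term $8|Q_{22}|\varepsilon\theta^2$ is absorbed by $\frac{\rho_2}{2}g'(s)s$ once $s\geqslant S_1^*=S^*+16|Q_{22}|\theta^2/(S^*\rho_2)$. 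On $[S^*,S_1^*]$ it is absorbed by $\frac{\rho_3}{2}Rf'(R)$.
\item This requires $R_1^*=S_1^*/\gamma_1$, so that $f$ is still strictly increasing there and noise-off implies $s\geqslant S_1^*$. It also fixes $\varepsilon=\rho_3R^*f'(R_1^*)/(16|Q_{22}|\theta^2)$.
\end{enumerate}
The missing idea in your proposal is some calibration of this kind, ordering the scales as $S^*<S_1^*\leqslant\gamma_1R_1^*\leqslant\gamma_2R_1^*\leqslant S_2^*$, with $f$ flat where the noise is off and $g''$ small where it is on.
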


The proof is given in Section~\ref{sec:MarkovianCoupling} (with explicit expressions for $C_p$ and $\lambda_p$ provided at the end of the proof).

Applying this result with $p=1$ gives the Wasserstein contraction~\eqref{eq:contraction_def} in this case. By the Banach fixed-point theorem, the semi-group property and conditionning with respect to the initial condition, this classically implies  that $(P_t)_{t\geqslant 0}$ admits a unique invariant measure $\mu_\infty$ and that $\nu P_t$ converges weakly to $\mu_\infty$ for all $\nu \in \mathcal P(\R^d)$. Moreover, as mentioned above and further details in Section~\ref{subsec:concentration}), it induces concentration inequalities, from which we deduce the following:

\begin{proposition}\label{prop:concentreGp*}
Under Assumptions~\ref{Assum:basic} and~\ref{Assum:PointwiseContractInfty}, assume moreover that $\eta$ is Lipschitz continuous with constant $L_\eta$ and that the contraction~\eqref{eq:contraction_def} holds with $p=1$ for some $C_1,\lambda_1>0$. Denote by $\mu_\infty$ the invariant measure of the diffusion process~\eqref{eq:EDS_X}. Then, for all $p\geqslant 1$ and $t\geqslant 0$,
\[G_p^*(t) \leqslant A e^{- \min(\rho,\rho') t }\]
 with
 \begin{equation}
 \label{locArho'}
 A= \exp\left(     \frac{p L_\eta C_1}{\lambda_1} \po R+\int_{\R^d}|y|\mu_\infty(\dd y)\pf  \right)\,, \qquad		\rho' =  - p \mu_\infty(\eta) -  \frac{  |\sigma|^2 C_1^2 p^2 L_\eta^2 }{\lambda_1^2} \,.
 \end{equation}
\end{proposition}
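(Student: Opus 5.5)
The plan is to combine a martingale (Clark–Ocone type) concentration argument on a ball with a strong Markov decomposition outside the ball, where Assumption~\ref{Assum:PointwiseContractInfty} takes over.

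\textbf{Step 1: Lipschitz bounds from the $\mathcal W_1$ contraction.} For $u\geqslant 0$ and $x,y\in\R^d$, the contraction~\eqref{eq:contraction_def} with $p=1$ and Kantorovich duality give
\[|P_u\eta(x)-P_u\eta(y)|\leqslant L_\eta \mathcal W_1(\delta_xP_u,\delta_yP_u)\leqslant L_\eta C_1e^{-\lambda_1 u}|x-y|\,.\]
For $r\geqslant 0$ set $g_r(x)=\int_0^r P_u\eta(x)\,\dd u$. Then $g_r$ is Lipschitz with constant at most $L_\eta C_1/\lambda_1$, uniformly in $r$.

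Integrating the same bound against $\mu_\infty(\dd y)$ and using invariance of $\mu_\infty$ gives
\[|P_u\eta(x)-\mu_\infty(\eta)|\leqslant L_\eta C_1 e^{-\lambda_1 u}\Big(|x|+\int|y|\mu_\infty(\dd y)\Big)\,.\]
Hence
\[g_t(x)\leqslant t\mu_\infty(\eta)+\frac{L_\eta C_1}{\lambda_1}\Big(|x|+\int|y|\mu_\infty(\dd y)\Big)\,.\]

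\textbf{Step 2: concentration from a fixed starting point.} Fix $t$ and consider the martingale
\[M_s=\mathbb E_x\Big[\int_0^t\eta(X_u)\dd u\,\Big|\,\mathcal F_s\Big]=\int_0^s\eta(X_u)\dd u+g_{t-s}(X_s)\,,\]
which uses the Markov property. Formally, Itô's formula and the backward equation $\partial_r g_r=\mathcal L g_r+\eta$ give
\[\dd M_s=\na g_{t-s}(X_s)\cdot\sigma\,\dd B_s\,,\qquad \dd\langle M\rangle_s\leqslant |\sigma|^2(L_\eta C_1/\lambda_1)^2\dd s\,.\]
Since $\exp(pM_s-\frac{p^2}{2}\langle M\rangle_s)$ is a supermartingale, we get
\[G_p(x,t)=\mathbb E_x e^{pM_t}\leqslant e^{pg_t(x)+\frac{p^2}{2}|\sigma|^2(L_\eta C_1/\lambda_1)^2t}\,.\]
Combined with Step 1, this yields for $|x|\leqslant R$
\[G_p(x,t)\leqslant A\,e^{-\rho' t}\,,\]
with room to spare in the factor $\frac12$ compared with~\eqref{locArho'}.

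\textbf{Step 3: outside the ball.} For $|x|>R$, let $\tau$ be the hitting time of $\overline{\mathcal B(0,R)}$. On $[0,\tau)$ we have $\eta(X_s)\leqslant-\rho$. The strong Markov property then gives
\[G_p(x,t)\leqslant \mathbb E_x\Big[e^{-p\rho t}\1_{\tau\geqslant t}+e^{-p\rho\tau}G_p(X_\tau,t-\tau)\1_{\tau<t}\Big]\,.\]
Here $|X_\tau|=R$, so Step 2 gives $G_p(X_\tau,t-\tau)\leqslant Ae^{-\rho'(t-\tau)}$. Since $p\geqslant 1$ and $A\geqslant 1$, both terms are bounded by $Ae^{-\min(\rho,\rho')t}$. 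The same bound trivially covers $|x|\leqslant R$, so taking the supremum over $x$ concludes.

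\textbf{Main obstacle.} The delicate point is the rigorous identification of the martingale representation in Step 2. In the non-elliptic case, $g_r$ need not be $\mathcal C^2$, so Itô's formula cannot be applied directly; what the argument really needs is only $|\na g_r|\leqslant L_\eta C_1/\lambda_1$.

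I would first regularize. Replace $\eta$ by a smooth Lipschitz approximation, and add a small nondegenerate noise $\epsilon\,\dd W$ in $\sigma$, which makes the semigroup smooth. I would then try to check that the $\mathcal W_1$ contraction constants persist uniformly as $\epsilon\to0$; this is plausible via synchronous coupling of the extra noise but is not automatic.

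If that fails, I would use the alternative route of discretizing $[0,t]$ into $N$ steps. Writing $M$ as a sum of martingale increments $g_{t-s_{k+1}}(X_{s_{k+1}})-P_{s_{k+1}-s_k}g_{t-s_{k+1}}(X_{s_k})$ plus drift terms, I would bound each conditional exponential moment by a Gaussian concentration for $\delta_xP_h$ of Lipschitz functions at small $h$. That concentration follows from the synchronous-coupling representation of $X_h$ as a function of the Brownian path, with Lipschitz constant $\approx|\sigma|\sqrt h$. Letting $N\to\infty$ should then recover the bound of Step 2.
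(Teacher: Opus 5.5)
Your proof is correct, and its architecture is the paper's. The paper also first shows $\sup_{|x|\leqslant R}G_p(x,t)\leqslant Ae^{-\rho' t}$: this is the corollary of Proposition~\ref{prop:concentration} with $\varphi=p\eta$, with the same mean correction through $|P_u\eta(x)-\mu_\infty(\eta)|\leqslant L_\eta C_1e^{-\lambda_1 u}\mathcal W_1(\delta_x,\mu_\infty)$. It then runs the same strong Markov decomposition at the hitting time of $\mathcal B(0,R)$ as in the proof of~\eqref{eq:limsupGJpeta}.

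The one genuine difference is how the concentration step is obtained. The paper avoids stochastic calculus. It applies a discrete-time result of \cite{LMM} to the chain $(X_{n\delta})_{n\in\N}$, combining the $\mathcal W_1$ contraction with a $T_1$ inequality for each kernel $\delta_xP_\delta$ (coming from a log-Sobolev inequality implied by the one-sided Lipschitz condition), and then lets $\delta\to0$. This is essentially your fallback, so that route is sound.

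Your continuous-time martingale is the natural limit object and is slightly sharper. It gives $\tfrac{p^2}{2}$ instead of $p^2$ in front of $|\sigma|^2(L_\eta C_1/\lambda_1)^2$; the factor $2$ is lost in the paper's one-step constant $|\sigma|^2(e^{2L\delta}-1)/L\sim 2\delta|\sigma|^2$.

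You should drop the extra-noise regularization, since nothing provides a $\mathcal W_1$ contraction with uniform constants for the perturbed semigroup. It is not needed anyway: the continuous-time argument can be made rigorous directly. By martingale representation, $M_s=M_0+\int_0^s H_u\cdot\dd B_u$. The Clark--Ocone formula for $F=\int_0^t\eta(X_u)\dd u$ (with $\eta$ mollified, which preserves $L_\eta$) then gives $H_s=\mathbb E[D_sF\,|\,\mathcal F_s]=\sigma^T\na g_{t-s}(X_s)$. This only requires the flow $y\mapsto X_u^y$ to be pathwise differentiable, which holds because the noise is additive and $b\in\mathcal C^2$; no smoothing by $P_t$ is involved. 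Your Step~1 then gives $|H_s|\leqslant|\sigma|L_\eta C_1/\lambda_1$, and the exponential-moment bound follows at once.
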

This is proven in Section~\ref{subsec:concentration}. It only gives a contraction when $\rho'>0$, which a minima requires that $\mu_\infty(\eta)<0$. As discussed in \cite{CattiauxGuillinFathi}, this condition is met in many situations, for instance for the overdamped Langevin diffusion~\eqref{eq:overdampedSDE} in dimension $d=1$, with $\theta=1$ for simplicity (so that $\mu_\infty \propto e^{-U}$), with $\eta(x) = -U''(x)$, integrating by parts yields
\[\mu_\infty(\eta) = - \int_{\R^d} (U')^2 \mu_\infty < 0\,. \]
However, even under this condition, $\rho' >0$ only for sufficiently small values of $p$. For a given $p\geqslant 1$, the question is to find a $\eta$ satisfying~\eqref{eq:eta} with $\mu_\infty(\eta)<0$ but with a sufficiently small Lipschitz constant so that $\rho'>0$. Relying on Theorem~\ref{thm:contractionMarkov} to control $C_1$ and $\lambda_1$, we can get the following general result:

\begin{theorem}\label{thm:final}
For $\unif{\rho},\unif{\rho}',\unif{S},\unif{M},\unif{\theta},\unif{q}>0$, $\unif{S_2}>\unif{S}$ and $p\geqslant 1$, there exist $\unif{\delta},\unif{C},\unif{\lambda}>0$ (depending only and explicitly on $\unif{\rho},\unif{\rho}',\unif{S},\unif{S_2},\unif{M},\unif{\theta},\unif{q},p$) such that the following holds. Let  $d\in\N^*$, $b\in\mathcal C^1(\R^d,\R^d)$ and $\sigma\in\R^{d\times d}$ satisfying Assumptions~\ref{assum:decomposeYZ} and \ref{assum:Contract_at_infty} with
\begin{equation}
\label{locboundparam}
\min(\rho_1,\rho_2) \geqslant \unif{\rho}\,,\quad \max(L_1,L_2,L_3,|Q|,|Q^{-1}|,|\sigma|) \leqslant \unif{M}\,,\quad S^* \leqslant \unif{S},\quad \theta \geqslant \unif{\theta},
\end{equation}
and such that, moreover, for all $x\in\R^d$ and $v\in\R^d$ with $v\cdot Q v= 1$,
\begin{equation}
\label{locvQv}
v\cdot Q \na b(x) v \leqslant \left\{\begin{array}{ll}
\unif{\delta}&  \text{if }|x| < \unif{S} \\
- \unif{\rho} & \text{if }|x|\geqslant\unif{S}\,,
\end{array}\right.
\end{equation}
and  the invariant measure\footnote{whose existence and uniqueness is implied by Theorem~\ref{thm:contractionMarkov} applied with $p=1$.} $\mu_\infty$ of~\eqref{eq:EDS_X} satisfies
\begin{equation}
\label{eq:muinftyq}
\mu_\infty\po \mathcal B(0,\unif{S_2})\pf \leqslant 1 - \unif{q}\,,
\end{equation}
where $\mathcal B(x,r)=\{y\in\R^d,\ |y-x|\leqslant r\}$.  
Then, the $L^p$ Wasserstein contraction~\eqref{eq:contraction_def} holds with $C = \unif{C}$ and $\lambda = \unif{\lambda}$.
\end{theorem}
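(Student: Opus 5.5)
The plan is to reduce to Proposition~\ref{prop:concentreGp*} applied to the process $\tilde X_t = Q^{1/2}X_t$, with a suitably chosen Lipschitz function $\eta$.

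\textbf{Change of variables.} The linear change of variables only alters constants by factors of $|Q|^{1/2}$ and $|Q^{-1}|^{1/2}$, both controlled by $\unif{M}$. For $\tilde X$, the drift is $\tilde b(\tilde x)=Q^{1/2}b(Q^{-1/2}\tilde x)$, and condition~\eqref{locvQv} becomes a bound on $\sup_{w\in\Sd} w\cdot \na \tilde b(\tilde x) w$ at the point $\tilde x = Q^{1/2}x$. I will assume the statement of Proposition~\ref{prop:COntractionkappap} and the bound~\eqref{loc:betakappaG} for $\tilde X$. A $\mathcal W_p$ contraction for $\tilde X$ transfers back to $X$ at the cost of the factor $(|Q||Q^{-1}|)^{1/2}\leqslant \unif{M}$ in $C$.

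\textbf{Applying Theorem~\ref{thm:contractionMarkov} with $p=1$.} This yields $C_1,\lambda_1$ depending only on $\mathfrak P'$, hence bounded in terms of the overlined parameters. Since $\tilde b$ is the image of a $\mathcal C^1$ drift, the one-sided Lipschitz condition holds with $L\leqslant \unif{\delta}$ up to constants, so Assumption~\ref{Assum:basic} holds up to the regularity issue. I will handle regularity by mollifying $b$, which keeps all the assumptions with the same constants, and then passing to the limit.

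\textbf{Moment bound.} I also need a bound on $\int|y|\mu_\infty(\dd y)$. From Assumption~\ref{assum:Contract_at_infty}, via a Lyapunov computation with $\|x\|_Q^2$, I get
\[
\mathcal L\|x\|_Q^2 \leqslant -\rho_2\|x\|_Q^2 + c\bigl(S^*, |b(0)|, |\sigma|, |Q|\bigr).
\]
This involves $|b(0)|$, which is not among the parameters. I will handle it instead through~\eqref{locvQv}: integrating along segments from $0$ gives a bound. Alternatively, I will use the $\mathcal W_1$ contraction together with the hypothesis that $\mu_\infty$ is concentrated near the origin, which in turn needs a first-moment bound. This point deserves care. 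A cleaner route is to note that~\eqref{locvQv} with $-\unif\rho$ outside the ball, combined with $\mu_\infty(\mathcal B(0,\unif{S_2}))\leqslant 1-\unif q$, forces relevant mass and moments to be controlled. I may need to assume a bound on $|b(0)|$ is absorbed in $\unif{M}$, or translate coordinates.

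\textbf{Choice of $\eta$ and the key estimate.} I take
\[
\eta(x) = \unif\delta - (\unif\delta+\unif\rho)\,\chi(|x|),
\]
where $\chi$ is a piecewise-linear ramp equal to $0$ on $[0,\unif S]$ and $1$ on $[\unif{S_2},\infty)$. This $\eta$ satisfies~\eqref{eq:eta} and Assumption~\ref{Assum:PointwiseContractInfty}, with Lipschitz constant
\[
L_\eta = \frac{\unif\delta+\unif\rho}{\unif{S_2}-\unif S}.
\]
Then
\[
\mu_\infty(\eta)\leqslant \unif\delta - (\unif\delta+\unif\rho)\,\unif q.
\]
Hence in~\eqref{locArho'} I get
\[
\rho' \geqslant p\bigl(\unif\rho\,\unif q - \unif\delta(1-\unif q)\bigr) - \frac{\unif M^2 C_1^2 p^2 (\unif\delta+\unif\rho)^2}{\lambda_1^2(\unif{S_2}-\unif S)^2}.
\]

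\textbf{Main obstacle.} The quadratic term does not vanish as $\unif\delta\to 0$. So positivity of $\rho'$ is not automatic, and this is the crux. The fix I will try first is to rescale $\eta$ by a factor $\epsilon$ through convexity. Since $|\Phi_t v|\leqslant e^{\int \eta}$ with any valid $\eta$, I can interpolate: write
\[
\eta = \epsilon\,\eta_1 + (1-\epsilon)\,\eta_2,
\]
and use Hölder's inequality. Alternatively, I can first raise $p$ and then apply Jensen, so that the quadratic term $p^2L_\eta^2$ is beaten for small parameters. Otherwise, I will use that $\unif\rho\,\unif q$ is the linear gain and require the quadratic loss to be smaller, shrinking $L_\eta$ by widening the ramp with a smaller slope up to $\unif{S_2}$ only through the mass bound. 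If that fails, the fallback is to apply the bound to $G$ over a short time window on which the quadratic concentration term scales better.

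Once $\rho'>0$, Proposition~\ref{prop:concentreGp*} and~\eqref{loc:betakappaG} give $\beta_p(t)\leqslant A^{1/p}e^{-\min(\rho,\rho')t/p}$, which is the claimed contraction with explicit $\unif C,\unif\lambda$.
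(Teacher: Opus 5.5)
\textbf{There is a genuine gap.} Your reduction is the same as the paper's: the change of variables by $Q^{1/2}$, the $\mathcal W_1$ contraction from Theorem~\ref{thm:contractionMarkov} with $p=1$, then Proposition~\ref{prop:concentreGp*} with a ramp function $\eta$. But the decisive step, making $\rho'>0$ in~\eqref{locArho'}, is left open, and with your choice of $\eta$ it cannot be guaranteed.

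Your $\eta$ drops to $-\unif\rho$ outside $\mathcal B(0,\unif{S_2})$. As $\unif\delta\to 0$, the linear gain therefore tends to $p\,\unif\rho\,\unif q$, while the quadratic loss tends to a constant multiple of $p^2\unif\rho^2/(\unif{S_2}-\unif S)^2$. Both are of order one, and nothing in the hypotheses orders them, so shrinking $\unif\delta$ (the only parameter you are free to choose) does not help.

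The remedies you list do not close this:
\begin{itemize}
\item Raising $p$ makes things worse, since the loss scales like $p^2$ and the gain like $p$.
\item The ramp cannot be widened beyond $[\unif S,\unif{S_2}]$ without losing the mass estimate~\eqref{eq:muinftyq}.
\item Both terms in the exponent of~\eqref{loc:concentreExpo2} are linear in $t$, so short time windows do not change the sign of the rate.
\end{itemize}

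\textbf{The missing idea} is that the well of $\eta$ need not be as deep as the true curvature allows. In the $Q^{1/2}$-transformed coordinates, take $\eta$ equal to $\unif\delta$ on the image of $\mathcal B(0,\unif S)$, equal to $-\unif\delta/\unif q$ outside the image of $\mathcal B(0,\unif{S_2})$, and linear in between.
\begin{itemize}
\item It is admissible in~\eqref{eq:eta} as soon as $\unif\delta\leqslant\unif\rho\,\unif q$, because for $|x|\geqslant\unif S$ the curvature is $\leqslant-\unif\rho\leqslant-\unif\delta/\unif q$.
\item It satisfies Assumption~\ref{Assum:PointwiseContractInfty} with $\rho=\unif\delta/\unif q$.
\item Every relevant quantity is now proportional to $\unif\delta$: $L_\eta=O(\unif\delta)$, and by~\eqref{eq:muinftyq}, $\mu_\infty(\eta)\leqslant(1-\unif q)\unif\delta-\unif q\cdot\unif\delta/\unif q=-\unif q\,\unif\delta$.
\end{itemize}
Hence $\rho'\geqslant p\,\unif q\,\unif\delta-c\,\unif\delta^2>0$ for $\unif\delta$ small. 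The resulting rate $\min(\unif\delta/\unif q,\rho')/p$ is only of order $\unif\delta$, but it is positive, which is all the theorem claims.

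Your first suggestion can be made to work along these lines, without any H\"older inequality. The convex combination $\epsilon\eta_1+(1-\epsilon)\unif\delta$ of your $\eta_1$ with the constant bound $\unif\delta$ is itself pointwise admissible. Choosing $\epsilon=\unif\delta(1+1/\unif q)/(\unif\delta+\unif\rho)$, which is $\leqslant 1$ exactly when $\unif\delta\leqslant\unif\rho\,\unif q$, gives precisely this shallow well.

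Your concern about $\int|y|\mu_\infty(\dd y)$ is legitimate: it enters the prefactor $A$ of Proposition~\ref{prop:concentreGp*} and is not controlled by the listed parameters. However, the paper simply invokes that proposition without addressing this point, so it is not where your argument falls short of the paper's.
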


The proof is provided in Section~\ref{subsec:demofinal}. Notice that it is not difficult to see that~\eqref{locvQv} implies Asumption~\ref{assum:Contract_at_infty} (as in \cite[Section 2.2.1]{MonmarcheBruit}). 

 Keeping track of the explicit constants in Theorem~\ref{thm:final} is tedious and not very informative, however the important point is  that there is no dependency in $d$ (when considering that the parameters in Assumptions~\ref{assum:decomposeYZ} and \ref{assum:Contract_at_infty}  are independent from $d$). The novelty with respect to the classical results in the globally dissipative case is that in~\eqref{locvQv} we allow the curvature to be negative in a compact set (although it cannot be too negative depending on the other parameters).

Notice that, in the elliptic case, this doesn't recover the result of~\cite{MonmarcheBruit}. In the latter, there is no restriction on how negative  the curvature of the drift can be in a ball, but then the diffusivity $\theta$ has to be large enough. This is not an equivalent condition, and in our situation we were not able to play only with the temperature to conclude (in particular, in~\eqref{locArho'}, it is not clear that increasing $\theta$ helps to get $\rho'>0$, even when taking into account that $\mu_\infty$ gets more spread out and thus $\eta$ can have a small Lipschitz constant while $\mu_\infty(\eta)$ remains negative).

It would be interesting to have a result similar to Theorem~\ref{thm:final} (in particular, covering non-elliptic processes) but with a condition only on the temperature as in~\cite{MonmarcheBruit}. Indeed, observing a change of behaviour (i.e. a phase transition) as the temperature varies is a natural question in statistical physics, and moreover many enhanced sampling algorithms are based on the general idea to sample a measure at a higher temperature than the real target, see  \cite{lelievre2025convergence} and references within.

\begin{remark}\label{rem:q}
When $\mu_\infty$ is only known as the invariant measure of~\eqref{eq:EDS_X} (without for instance any explicit density or other informations), a general way to obtain an inequality of the form~\eqref{eq:muinftyq} is to find a function $W \in \mathcal C^2(\R^d, \R)$ satisfying
\begin{equation}
\label{eq:remqW}
\mathcal L W(x) \leqslant \left\{\begin{array}{ll}
-\varepsilon & \text{if }x\in \mathcal D\\
C & \text{if }x\notin \mathcal D
\end{array} \right.
\end{equation}
for some domain $\mathcal D \subset \R^d$ where $C,\varepsilon>0$ and we recall that  $\mathcal L = b\cdot\na + \frac{1}{2}\sigma\sigma^T : \na^2$ is the generator of the process. Indeed, using that $\int_{\R^d} \mathcal L W \dd \mu_\infty = 0$ by invariance, this implies that
\[\mu_\infty \po \mathcal D \pf \leqslant \frac{C}{C+\varepsilon}\,.\]

\end{remark}

\section{Examples}\label{sec:applications}

\subsection{The elliptic case}

We have already discussed the reversible overdamped Langevin process in Example~\ref{ex:Joverdamped}. When the drift $b$ is not the gradient of a potential and $\sigma=\theta \mathrm{Id}_d$ for some $\theta>0$, the process is not reversible but the results of~\cite{MonmarcheBruit} apply and give a clearer result that Theorem~\ref{thm:final} (with simple explicit constants). However, for illustrative purpose, let us show how Theorem~\ref{thm:final} applies in this situation:

\begin{proposition}\label{prop:elliptic}
Given $\theta,L,\ell,R,K,R_2>0$  and $p\geqslant 1$, there exist $C_p,\lambda_p,\delta>0$ such that for any $d\in\mathbb N^*$, $b\in\mathcal C^2(\R^d,\R^d)$ such that for all $v\in\Sd$,
\[\forall x\in\R^d\,,\qquad      v\cdot \na b(x) v  \geqslant \ell \1_{|x|\geqslant R} - \delta \1_{|x|<R}    \,,\]
and
\[\forall x \in \mathcal B(0,R_2)\,,\qquad x\cdot b(x) \geqslant - K |x|^2\,, \]
the $L^p$ contraction~\eqref{eq:contraction_def} holds with $C=C_p$ and $\lambda = \lambda_p$ for the process~\eqref{eq:EDS_X} with $\sigma = \theta \mathrm{Id}_d$.
\end{proposition}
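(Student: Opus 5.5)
The plan is to reduce Proposition~\ref{prop:elliptic} to Theorem~\ref{thm:final}: I check each of its hypotheses with constants that depend only on $\theta,L,\ell,R,K,R_2,p$ and not on $d$, and then take $\delta\leqslant\unif{\delta}$.

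I read the curvature hypothesis with the sign convention of~\eqref{locvQv}, i.e. $v\cdot\na b(x)v\leqslant -\ell$ for $|x|\geqslant R$ and $v\cdot\na b(x)v\leqslant \delta$ for $|x|<R$; I also take the given $L$ to bound $|\na b|$ on $\mathcal B(0,R_2)$. The choices are then as follows.

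\emph{Assumption~\ref{assum:decomposeYZ}.} By Remark~\ref{rem:n0}, take $n=0$, $z=x$ and $\tilde\sigma=0$. The one-sided bound $(b(x)-b(x'))\cdot(x-x')\leqslant \delta|x-x'|^2$ follows by integrating $\na b$ along the segment $[x',x]$, so $L_3=\max(\delta,1)$. Take $L_1=L_2=1$ (they are unused) and $\rho_1=1$ (vacuous).

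\emph{Assumption~\ref{assum:Contract_at_infty} and~\eqref{locvQv}.} Take $Q=\mathrm{Id}$. Integrating $\na b$ along $[x',x]$, a segment of length $s$ spends length at most $2R$ inside $\mathcal B(0,R)$, so
\[(x-x')\cdot(b(x)-b(x'))\leqslant \big(-\ell(s-2R)+2R\delta\big)s.\]
This gives $\rho_2=\ell/2$ and $S^*=4R(1+\delta/\ell)$, uniformly for $\delta\leqslant 1$. Condition~\eqref{locvQv} holds directly with $\unif S=R$, $\unif\rho=\min(\ell/2,1)$, $\unif\theta=\theta$ and $|\sigma|=\theta$ (operator norm). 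All the parameters in~\eqref{locboundparam} are then controlled.

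\emph{Condition~\eqref{eq:muinftyq}.} This is the main obstacle. I fix $\unif{S_2}\in(R,\infty)$ and use Remark~\ref{rem:q} with $\mathcal D=\mathcal B(0,r_0)$ and a radial test function $W(x)=-g(|x|)$, taking either $r_0=\unif{S_2}$ or $\unif{S_2}$ small relative to $R_2$. Using $x\cdot b(x)\geqslant -K|x|^2$ on $\mathcal B(0,R_2)$, one has
\[\mathcal LW\leqslant -\tfrac{\theta^2}{2}\Big(g''+\tfrac{d-1}{r}g'\Big)+Krg'.\]
I choose $g'(r)=e^{Kr^2/\theta^2}$, the scale function, so that $\tfrac{\theta^2}{2}g''=Krg'$ and the drift term cancels. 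This leaves
\[\mathcal LW\leqslant -\tfrac{(d-1)\theta^2}{2r}g'.\]
The right-hand side is uniformly negative and only improves with $d$, so no $d$-dependence enters. For $d=1$ I add a small $-c r^2$ term to $g$ to get strict negativity near $0$.

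Outside $\mathcal B(0,r_0)$, I flatten $g$ with a cutoff so that $W$ is constant beyond $R_2$. On the annulus $r_0\leqslant|x|\leqslant R_2$, the bound $\mathcal LW\leqslant C$ requires control of $|b|$ there, which is where $L$ (a bound on $|\na b|$ on $\mathcal B(0,R_2)$) is used. The Laplacian of the cutoff contributes a term $(d-1)/r$ of the good sign as long as $g'\geqslant0$, and the $g''$ part is bounded independently of $d$.

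Then Remark~\ref{rem:q} gives $\mu_\infty(\mathcal B(0,\unif{S_2}))\leqslant 1-\unif q$ with $\unif q=\varepsilon/(C+\varepsilon)$. If the cutoff estimate turns out to involve $d$, I would instead compare the radial part $|X_t|$ with a one-dimensional diffusion.

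\emph{Conclusion.} Theorem~\ref{thm:final} provides $\unif\delta$. Taking $\delta=\min(\unif\delta,1)$ yields the claimed $C_p,\lambda_p$.
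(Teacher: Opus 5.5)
Your reduction is the same as the paper's. You apply Theorem~\ref{thm:final} with $n=0$ and $Q=\mathrm{Id}$, and you obtain \eqref{eq:muinftyq} from Remark~\ref{rem:q}, using a radially decreasing $W$, flattened beyond $R_2$, whose radial derivative contains $e^{Kr^2/\theta^2}$ so that the diffusion term cancels the worst case of the drift. The gap is in your specific test function, and that is exactly the step that must deliver uniformity in $d\in\mathbb N^*$.

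With $W(x)=-g(|x|)$ and $g'(r)=e^{Kr^2/\theta^2}$, you have $g'(0)=1$. So $W$ has a cone point at the origin and is not $\mathcal C^2$, which Remark~\ref{rem:q} requires. After the cancellation you are left with $\mathcal LW\leqslant-\frac{(d-1)\theta^2}{2r}g'(r)$, which vanishes identically when $d=1$.

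For $d\geqslant 2$ this can be repaired by smoothing at $0$, since the singular part of $\Delta W$ has the good sign. For $d=1$ your patch fails:
\begin{itemize}
\item Adding $-cr^2$ to $g$, as you wrote, means adding $c|x|^2$ to $W$. This contributes $\mathcal L(c|x|^2)=c(2x\cdot b(x)+\theta^2)\geqslant c(\theta^2-2K|x|^2)>0$ near the origin, which is the wrong sign.
\item The opposite sign only gives $\mathcal LW\leqslant -c\theta^2+2cK|x|^2$, which is negative for $|x|<\theta/\sqrt{2K}$ only.
\end{itemize}
Remark~\ref{rem:q} needs $\mathcal LW\leqslant-\varepsilon$ on a set containing $\mathcal B(0,\unif{S_2})$ with $\unif{S_2}>\unif S\geqslant R$, and $R$ is arbitrary compared with $\theta/\sqrt{2K}$. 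Negativity on a small ball alone gives no upper bound on $\mu_\infty(\mathcal B(0,\unif{S_2}))$. Your fallback, comparing with a one-dimensional radial diffusion, is not carried out.

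The missing idea is to work in the variable $s=|x|^2/2$. The paper takes $W(x)=g(|x|^2/2)$ with $g'(s)=-e^{2Ks/\theta^2}$ on the inner ball, so the radial derivative is $-r\,e^{Kr^2/\theta^2}$ rather than $-e^{Kr^2/\theta^2}$. Then:
\begin{itemize}
\item $W$ is smooth at $0$, and $\Delta W=|x|^2g''+d\,g'$.
\item The term $\frac{\theta^2}{2}|x|^2g''$ exactly cancels the bound $g'(s)\,x\cdot b(x)\leqslant-K|x|^2g'(s)$.
\item What remains is $\mathcal LW\leqslant\frac{\theta^2 d}{2}g'(s)\leqslant-\frac{\theta^2}{2}$, for every $d\geqslant 1$.
\item On the flattening annulus, $g'\leqslant 0\leqslant g''$. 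The $d\,g'$ term is then non-positive, and $\mathcal LW\leqslant KR_2^2\|g'\|_\infty+\frac{\theta^2}{2}R_2^2\sup g''$, again independent of $d$.
\end{itemize}
This uses only $x\cdot b(x)\geqslant -K|x|^2$ on $\mathcal B(0,R_2)$, because $W$ is radially monotone. So your claim that $|b|$ must be controlled on the annulus is wrong. Reinterpreting $L$ as a bound on $|\na b|$ adds a hypothesis the statement does not contain, and the paper never uses $L$.

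There is also a bookkeeping error. \eqref{locboundparam} requires $S^*\leqslant\unif S$, so $\unif S=R$ is not admissible with your $S^*=4R(1+\delta/\ell)$. You can take $\unif S=4R(1+1/\ell)$ instead, since \eqref{locvQv} only weakens as the threshold grows. But then the Lyapunov argument must run on $\mathcal B(0,\unif{S_2})$ with $\unif S<\unif{S_2}<R_2$. This needs $R_2>\unif S$, a size condition on $R_2$ that the statement does not contain. The paper's proof glosses over the same point: it uses $\mathcal D=\mathcal B(0,R)$ and tacitly assumes $R_2>R$.
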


\begin{proof}
Apart from~\eqref{eq:muinftyq}, all the others conditions of Theorem~\ref{thm:final} are straightforward to check here (with $Q=\mathrm{Id}$, and Assumption~\ref{assum:decomposeYZ} to be understood with $n=0$, see Remark~\ref{rem:n0}). In particular,  the local inequality $ v\cdot \na b(x) v  \geqslant \ell$ for all $v\in\Sd$ when $|x|\geqslant R$ implies Assumption~\ref{assum:Contract_at_infty} by integrating (see e.g. \cite[Section 2.2.1]{MonmarcheBruit}, or the more complicated situation in the kinetic case in Section~\ref{subsec:kinetic}).

 To apply Remark~\ref{rem:q}, take $W(x) = g(|x|^2/2)$ where $g(0)=0$, $g'(0)=-1$ and
\[g''(s) = \left\{\begin{array}{ll}
\displaystyle{- \frac{2K}{\theta^2} \exp\po \frac{2K s}{\theta^2}\pf }& \text{for } s\in [0,R^2/2]\\
\displaystyle{\frac{2}{R_2^2-R^2} \exp\po \frac{K R^2}{\theta^2}\pf}  & \text{for } s \in ]R^2/2,R_2^2/2[\\
\displaystyle{0} & \text{for }s \geqslant R_2^2/2\,,
\end{array}\right. \]
which gives $g'(s) = -\exp\po \frac{2K s}{\theta^2}\pf $ for $s\in[0,R^2/2]$ and $g'(s) = 0$ for $s\geqslant R_2^2/2$. This doesn't make $W$ a $\mathcal C^2$ function, so in fact what we really do is take $g''$ as a continuous function which is equal to this function over $[0,R^2/2]\cup[R^2/2+\varepsilon,R_2^2/2-\varepsilon]\cup [R_2^2/2,\infty)$ for some small $\varepsilon>0$ and such that $g'(s)=0$ for all $s\geqslant R_2^2/2$ and $g' \leqslant 0$ everywhere, do the computation with this $g$ and let $\varepsilon$ vanish at the end. We omit this step and do the computations with the $g$ above, which eventually provides the same result. Writing $s=|x|^2/2$,
\begin{align*}
\mathcal LW(x) &= b(x)\cdot x g'(s) + \frac{\theta^2}2 \co |x|^2  g''(s) + d g'(s)\cf\\
&\leqslant  - K |x|^2 g'(s) +  \frac{\theta^2}2 \co |x|^2  g''(s) + d g'(s)\cf\\ 
&\leqslant \left\{\begin{array}{ll}
\frac{\theta^2 d}2  g'(s) & \text{if } |x|< R\\
K R_2^2\|g'\|_\infty +  \frac{\theta^2 R_2^2}{2(R_2^2-R^2)} \exp\po \frac{K R^2}{\theta^2}\pf & \text{if }x>R
\end{array}\right.
\end{align*}
Hence, we get~\eqref{eq:remqW} with
\[C= \co K+\frac{\theta^2 }{2(R_2^2-R^2)} \cf R_2^2  \exp\po \frac{K R^2}{\theta^2}\pf \qquad \varepsilon =  \frac{\theta^2 d}2\,, \]
and thus, thanks to Remark~\ref{rem:q}, the condition~\eqref{eq:muinftyq} with
\[\overline{q} =  \frac{\varepsilon}{C+\varepsilon}\,. \]
This concludes the proof, since all conditions of Theorem~\ref{thm:final} have been checked.
\end{proof}

\subsection{Kinetic Langevin process}\label{subsec:kinetic}

Consider the kinetic Langevin diffusion process $(q_t,p_t)\in\R^{2d}$, solution of
\begin{equation}
\label{eq:kinLangevin}
 \left\{\begin{array}{lcl}
\dd q_t &=& p_t \dd t\\
\dd p_t &= & F(q_t,p_t) \dd t + \sqrt{2}\theta \dd B_t\,,
 \end{array}\right.
\end{equation}
with a force $F\in \mathcal C^1(\R^d)$ with bounded derivative and $\theta>0$. In the classical equilibrium case, $F(q,p)=-\na V(q) - \gamma p$ for some potential $V$ and friction $\gamma>0$.

 This process satisfies Assumption~\ref{assum:Ferre}, cf. \cite{FerreStoltz}. In order to apply either Theorem~\ref{thm:FerreStoltz} or \ref{thm:final}, we need a function $\eta$ satisfying~\eqref{eq:eta} and negative outside a compact set (we don't need it to apply Theorem~\ref{thm:contractionMarkov}, but then we are just recovering results similar to~\cite{EberleGuillinZimmer,schuh2024global}). However, in the equilibrium case for instance, for any $x=(q,p)$, for the drift $b(x) = (p,-\na V(q) -\gamma p)$,
\begin{equation}
\label{locnab}
\na b(x) = \begin{pmatrix}
0 & \mathrm{Id} \\
- \na^2 V(q) & -\gamma \mathrm{Id}
\end{pmatrix}\,.
\end{equation}
In particular, taking $v=(1,0,\dots,0)$ gives $v\cdot \na b(x) v = 0$. It is thus necessary to work with different coordinates. Following \cite{MonmarcheContraction}, we are led to work with $z = M^{1/2}x$
with
\begin{equation}
\label{locM}
M =  \begin{pmatrix}
\mathrm{Id} & c \mathrm{Id}\\
c \mathrm{Id} & a\mathrm{Id}
\end{pmatrix} \,,
\end{equation}
with $a,c\in\R$ to be chosen with  $a>c^2$. Having $\eta$ satisfying~\eqref{eq:eta} with the drift of the process $Z_t=M^{1/2}(q_t,p_t)$ amounts to enforce
\begin{equation}
\label{locetaM}
 \eta(z) \geqslant v \cdot M \na b(M^{-1/2}z) v\qquad \forall z,v\in\R^{2d}\ \text{with}\ v\cdot Mv=1\,.
\end{equation}
 Let us recall \cite[Proposition 4]{MonmarcheContraction}:
 
\begin{lemma}[from Proposition 4 of \cite{MonmarcheContraction}]\label{lemProp4}
For $\Lambda \geqslant \ell>0$ and $\gamma > \sqrt{\Lambda} - \sqrt{\ell}$, there exist $a,\rho>0$ and $c\in\R$ with $a > c^2$ such that for all $x=(q,p)\in\R^{2d} $ such that $\sigma(\na^2V(q))$ the spectrum of $\na^2 V(q)$ lies in $[\ell,\Lambda]$, considering~\eqref{locnab} and~\eqref{locM}, for any $v\in\R^d$ with $v\cdot M v= 1$,
 \[ v \cdot M \na b(x) v \leqslant - \rho\,.\]
 \end{lemma}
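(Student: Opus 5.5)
The plan is to reduce the statement to a uniform negative-definiteness estimate for $2\times 2$ matrices, using the block structure of $\nabla b$ and of $M$. Since $M$ and $\nabla b(x)$ are built from blocks that are polynomials in $\nabla^2 V(q)$ (with $\mathrm{Id}$ commuting with everything), I will diagonalize $\nabla^2 V(q)=\sum_i h_i e_ie_i^T$ with $h_i\in[\ell,\Lambda]$. Writing $v=(v_1,v_2)$ and decomposing $v_1=\sum_i\alpha_i e_i$, $v_2=\sum_i \beta_i e_i$, both quadratic forms $v\cdot Mv$ and $v\cdot M\nabla b(x) v$ split as sums over $i$ of two-dimensional quadratic forms in $(\alpha_i,\beta_i)$. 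It therefore suffices to find $a,c,\rho$, independent of $h\in[\ell,\Lambda]$, such that
\[
 w\cdot M_0 B_h w \leqslant -\rho\, w\cdot M_0 w\qquad \forall w\in\R^2,
\]
with $M_0=\begin{pmatrix}1&c\\ c&a\end{pmatrix}$ and $B_h=\begin{pmatrix}0&1\\ -h&-\gamma\end{pmatrix}$.

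Computing, $M_0B_h=\begin{pmatrix}-ch & 1-c\gamma\\ -ah & c-a\gamma\end{pmatrix}$, so the symmetric part is
\[
S_h=\begin{pmatrix}-ch & \tfrac12(1-c\gamma-ah)\\ \tfrac12(1-c\gamma-ah) & c-a\gamma\end{pmatrix}.
\]
The requirement is that $S_h+\rho M_0$ is negative semidefinite. For this I need $c>0$ and $a\gamma>c$, together with the determinant condition $4ch(a\gamma-c)>(1-c\gamma-ah)^2$ uniformly in $h\in[\ell,\Lambda]$. By compactness of $[\ell,\Lambda]$ and continuity, strict inequalities at $\rho=0$ will then persist for some small $\rho>0$, since $M_0$ is positive definite when $a>c^2$.

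The main obstacle is choosing $(a,c)$ so that the determinant condition holds simultaneously at all $h\in[\ell,\Lambda]$ under exactly $\gamma>\sqrt\Lambda-\sqrt\ell$. The relevant function of $h$ is $f(h)=4ch(a\gamma-c)-(1-c\gamma-ah)^2$, a concave quadratic, so it is enough to check positivity at the endpoints $h=\ell$ and $h=\Lambda$. The strategy is to guess the parametrization through the critical case: at the boundary of the admissible region, the equality $f(h)=0$ should hold at both endpoints with $\sqrt{4ch(a\gamma-c)}=|1-c\gamma-ah|$. This suggests taking the square root, $2\sqrt{c(a\gamma-c)}\sqrt h > |1-c\gamma-ah|$, and choosing $a,c$ so that the zeros of the line $1-c\gamma-ah$ lie between $\ell$ and $\Lambda$ and match the square-root function at both ends. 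Setting $s=\sqrt h$, the condition reads $ah^2$-type inequalities $as^2-2\sqrt{c(a\gamma-c)}\,s+c\gamma-1<0<as^2+2\sqrt{c(a\gamma-c)}\,s+c\gamma-1$ for $s\in[\sqrt\ell,\sqrt\Lambda]$. The second one is automatic if $c\gamma\geqslant 1$, or else easy to arrange. The first requires $[\sqrt\ell,\sqrt\Lambda]$ to lie strictly between the roots, whose separation is $2\sqrt{c(a\gamma-c)-a(c\gamma-1)}/a=2\sqrt{a-c^2}/a$. Choosing $c$ and $a$ to center the roots at $(\sqrt\ell+\sqrt\Lambda)/2$ and tuning $a$, one obtains the constraint $\sqrt\Lambda-\sqrt\ell<\gamma$ (the root center equals $\sqrt{c(a\gamma-c)}/a$), which is precisely the hypothesis. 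I would verify this parameter choice explicitly, possibly with $c\gamma\approx 1$, and then conclude by the perturbation step for $\rho$.
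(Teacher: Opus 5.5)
\textbf{There is a genuine gap, in the one nontrivial step: the choice of $(a,c)$.} Your reduction is correct: diagonalizing $\na^2V(q)$ gives the $2\times2$ problems with $M_0$ and $B_h$; negative definiteness of $S_h$ is equivalent to $c>0$ and $f(h)>0$; concavity reduces this to $h\in\{\ell,\Lambda\}$; and the perturbation in $\rho$ is fine. (The paper gives no proof of this lemma and simply imports Proposition~4 of \cite{MonmarcheContraction}.) But your sketch of the parameter choice puts the constraint in the wrong place.

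To see where it actually comes from, set $w=c/a\in(0,\gamma)$, $K=\sqrt{c(a\gamma-c)}/a=\sqrt{w(\gamma-w)}$ (the root centre) and $U=\sqrt{a-c^2}/a=\sqrt{1/a-w^2}$ (the half-separation). Since $(c\gamma-1)/a=K^2-U^2$, your two inequalities in $s=\sqrt h$ become $|s-K|<U$ and $U<s+K$. Over $s\in[\sqrt\ell,\sqrt\Lambda]$ this means $K+U>\sqrt\Lambda$ and $|K-U|<\sqrt\ell$.

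\begin{itemize}
\item The centre satisfies $K\leqslant\gamma/2$.
\item The half-width $U$ is completely free: let $a\to0$ with $w$ fixed. So enclosing $[\sqrt\ell,\sqrt\Lambda]$ between the roots of your first inequality costs nothing and imposes no condition on $\gamma$.
\item The restriction comes from the inequality you called automatic or easy. Your second inequality at $h=\ell$ reads $U<\sqrt\ell+K$, and it competes with $U>\sqrt\Lambda-K$ from the first inequality at $h=\Lambda$.
\end{itemize}

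Your concrete suggestions fail under the stated hypothesis. Centring the roots at $(\sqrt\ell+\sqrt\Lambda)/2$ requires $K=(\sqrt\ell+\sqrt\Lambda)/2\leqslant\gamma/2$, i.e.\ $\gamma\geqslant\sqrt\ell+\sqrt\Lambda$. Any choice with $c\gamma\geqslant1$, which is equivalent to $U\leqslant K$, gives $K+U\leqslant\gamma$, so it works only when $\gamma>\sqrt\Lambda$. For $\sqrt\Lambda-\sqrt\ell<\gamma\leqslant\sqrt\Lambda$ you need $c\gamma<1$, which is exactly the regime where your second inequality is binding.

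Your first heuristic was actually the right picture: the zero $(1-c\gamma)/a=U^2-K^2$ of $h\mapsto1-c\gamma-ah$ lies inside $(\ell,\Lambda)$, with $|1-c\gamma-ah|=2\sqrt{c(a\gamma-c)h}$ at both ends in the critical case. However, that picture forces $c\gamma<1$, which contradicts your later suggestion $c\gamma\approx1$.

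\textbf{The repair is short.} Take $w=\gamma/2$, so that $K=\gamma/2$ is the maximal centre. Pick
\[U\in\big(\max(\sqrt\Lambda-\gamma/2,\ \gamma/2-\sqrt\ell,\ 0),\ \gamma/2+\sqrt\ell\big).\]
This interval is nonempty exactly when $\gamma>\sqrt\Lambda-\sqrt\ell$. Then set $a=(U^2+\gamma^2/4)^{-1}$ and $c=a\gamma/2$. These give $c>0$, $a\gamma>c$, $a-c^2=a^2U^2>0$, and $f(\ell),f(\Lambda)>0$, after which your compactness argument produces $\rho$.

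The same computation shows the hypothesis is sharp within the family~\eqref{locM}: $K\leqslant\gamma/2$ together with $\sqrt\Lambda-K<U<\sqrt\ell+K$ forces $\gamma>\sqrt\Lambda-\sqrt\ell$.
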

 
 \begin{remark}\label{remloccond}
The condition that $\gamma > \sqrt{\Lambda} - \sqrt{\ell}$ is sharp, in the sense that if it fails, there are potentials $V$ with $\sigma(\na^2 V(q)) \subset [\ell,\Lambda]$ such that the dissipativity condition is false for any choice of $M$, see \cite{MonmarcheContraction}. 

Alternatively, there is no restriction on $\gamma$ when $\na^2 V$ is constant, since it is then sufficient to work independently on each  of its eigenspaces. More generally, when (possibly up to an orthonormal change of variable) $V(q) = \sum_{i=1}^q V_i(q_i)$ for some 1D potentials $V_i$, then the condition that $\gamma > \sqrt{\Lambda_i} - \sqrt{\lambda_i} $ for all $i\in\cco 1,d\ccf$, where $V_i''(q) \in [\ell_i,\Lambda_i]$ for all $q\in\R$, is sufficient to get the same result as in Lemma~\ref{lemProp4}.
 \end{remark}

Thanks to Lemma~\ref{lemProp4}, assuming that 
\begin{equation}
\label{loctildeF}
F(q,p) = -\na V(q) - \gamma p + \tilde F(q,p)
\end{equation}
where $\sigma(\na^2 V(q)) \subset [\ell,\Lambda]$,  $\gamma > \sqrt{\ell} - \sqrt{\lambda}$ and $\na \tilde F \rightarrow 0$ at infinity, we can take $M$ as in Lemma~\ref{lemProp4} (see \cite{MonmarcheContraction} for explicit expressions for $M$ and $\rho$; in the particular case where $\gamma^2 \geqslant 4\Lambda$ for instance, it is possible to take $a=\Lambda^{-1}$, $c=\gamma^{-1}$ and then $\rho = \ell/(3\gamma) $) and then take $\eta$ such that
 \[\eta(z) =  \sup \{ v \cdot M \na b(M^{-1/2}z) v,\  z,v\in\R^{2d}\ \text{with}\ v\cdot Mv=1\}\,.\]
This choice ensures that $\eta(z) \leqslant - \rho/2$ outside a compact set, which allows to apply Theorems~\ref{thm:FerreStoltz} and~\ref{thm:final}.  

\begin{remark}
The condition that $\na \tilde F$ vanishes at infinity excludes the case where $F(q,p) = -\na V(q) - \gamma p$ with $V$ only convex outside a compact set. Indeed, decomposing $V=V_0+V_1$ with $V_0$ strongly convex would leave  $\tilde F(q,p) = -\na V_1(q)$, whose derivative doesn't vanish as $p\rightarrow \infty$. The same issue appeared in \cite[Theorem 2]{monmarche2025logarithmic}.
\end{remark}

In view of this remark, although it is in principle possible to fine-tune a force $\tilde F$ such that Theorem~\ref{thm:final} applies in the case~\eqref{loctildeF} while there is no global dissipativity (i.e. $\sup \eta \geqslant 0$), which theoretically gives a first example where the $L^p$ contraction~\eqref{eq:contraction_def} holds in a non-elliptic non-globally dissipative case, this leads to very specific and unnatural conditions (like $\tilde F(q) = -\chi(p) \na V_1(q) $ with a vanishing $\chi$). For instance, if we simply assume that $|\na \tilde F|\leqslant \delta$, then for $\delta$ small enough Lemma~\ref{lemProp4} gives global dissipativity, in which case our approach is not necessary. For this reason, we will not detail the application of Theorem~\ref{thm:final}  to this situation. This unsatisfactory conclusion highlights that Theorem~\ref{thm:final} is based on too rough bounds and that, as discussed at the end of Section~\ref{subsec:resultFerreStoltz}, it might be necessary to better take into account the behaviour at infinity of the process, either by controlling the leading eigenvector of the corresponding Feynman-Kac operator (at least in some directions, to get~\eqref{locGCR} with a supremum over some unbounded sets, e.g. $\mathcal B(0,R_0)\times \R^d$) or conduct a more model-specific analysis of the evolution of the distance, splitting the space in more areas than simply $\mathcal B(0,R)$ and $\R^d\setminus \mathcal B(0,R)$ for some $R>0$, as in \cite{chak2025reflection} (see especially Theorem~4.7 there). It may also be important in this kinetic case to avoid resorting to an upper bound $\eta$ satisfying~\eqref{eq:eta}, as discussed in Section~\ref{subsec:sharperDecompose}. Following these directions is beyond the scope of the present work.

\begin{remark}\label{rem:appliPropkin}
On this topic, let us apply Proposition~\ref{prop:sharppinfini} in the kinetic case~\eqref{eq:kinLangevin}. It is clear that constant paths of the form $\gamma_q(t) = (q,0)$ for a fixed $q\in\R^d$ are admissible. Consider for instance the equilibrium case where $F(q,p)= -\na V(q) - \gamma p$, with $\|\na^2 V\|_\infty <\infty$. Then the assumptions of Proposition~\ref{prop:sharppinfini} hold and we deduce that
\[ \lim_{p\rightarrow\infty} \kappa_p(t) \geqslant \overline{\kappa}_\infty(t) \geqslant \sup_{q\in\R^d} |\Phi_t^{\gamma_q}|\,, \]
where $\Phi_t^{\gamma_q} = \exp(t \na b(q,0))$. The eigenvalues (possibly complex) of $\na b(q,0)$ being $-\gamma/2 \pm \sqrt{\gamma^2/4 - \xi }$ with $\xi \in \sigma(\na^2 V(q))$, denoting  $\xi_0 = \inf\{\xi,\ \xi\in \sigma(\na^2 V(q)),\ q\in\R^d\}$, we get that
\[\frac1t \ln \overline{\kappa}_\infty(t) \geqslant \left\{\begin{array}{ll}
\sqrt{\gamma^2/4-\xi_0} - \gamma/2 & \text{if }\xi_0 <\gamma^2/4\\
-\gamma/2 & \text{otherwise.}
\end{array}\right.
 \]
 This shows that,   for all $t\geqslant 0$, when $\xi_0 <0$ (i.e. when $V$ is not convex) then $\kappa_p(t) \geqslant 1$ for $p$ large enough. However, when $\xi_0>0$, this result doesn't prevent $\kappa_p(t) < 1$ for arbitrarily large $p$, whatever the value of $\gamma$, even when the condition that $\gamma \geqslant \sqrt{\Lambda} - \sqrt{\ell}$ as discussed in Remark~\ref{remloccond} fails, so that there doesn't exist any symmetric positive matrix $M$ such that the norm $\|\cdot \|_M$ is deterministically contracted along a synchronous coupling. This is another hint that working with a local upper bound $\eta$ satisfying~\eqref{eq:eta}, even up to a change of coordinates, might be too rough in the kinetic case.
\end{remark}

\subsection{Colored noise diffusion}

Consider the process $(q_t,w_t) \in \R^{n+m}$ solving the SDE
\begin{equation}
\label{eq:colored}
\left\{\begin{array}{rcl}
\dd q_t &=& -\na V(q_t) \dd t + Aw_t \dd t \\
\dd w_t &= &-  w_t \dd t + \sigma_0 \dd B_t\,,
\end{array}\right.
\end{equation}
where $A$ is a surjective $n\times m$ matrix and $\sigma_0 $ is a non-singular $m\times m$ matrix. This model appears to model turbulences, finance or coarse-grained molecular dynamics, see e.g. \cite{10.1214/13-PS218,hanke2024stochastic} and references within. The variable $q_t$ describes the position of a particle in a potential $V$ subject to a colored noise $w_t$ whose spectrum can be prescribed with a suitable choice of $\sigma_0$.

Setting $Y_t=q_t$ and $Z_t = w_t+ \eta A^T q_t$ for some $\eta>0$,
\begin{align*}
\dd Y_t &= \co -\na V(Y_t)   + A(Z_t - \eta A^T Y_t) \cf \dd t  \\
\dd Z_t &= \co -\eta A^T  \na V(Y_t)   + \po \eta A^T A - \mathrm{Id}_d\pf (Z_t - \eta A^T Y_t) \cf \dd t + \sigma_0 \dd B_t\,.
\end{align*}
Assume that there exist $\rho,L,R>0$ such that, for all $y,y'\in\R^n$,
\begin{equation}
\label{eq:locnaV}
- \po \na V(y) - \na V(y') \pf \cdot (y-y') \leqslant \left\{\begin{array}{rl}
-\rho  |y-y'|^2  & \text{if } |y-y'|\geqslant R\\
L|y-y'|^2 & \text{otherwise,}
\end{array}\right.
\end{equation}
which is in particular the case if $V$ is convex outside a compact set. It is then easily seen that, for two solutions $(q_t,w_t)$ and $(q_t',w_t')$ driven by the same Brownian motion,
\[\dd \po |q_t-q_t'|^2 + C_0 |w_t-w_t'|^2 \pf \leqslant - \rho'  \po |q_t-q_t'|^2 + C_0 |w_t-w_t'|^2 \pf \dd t\]
as long as $|q_t-q_t'|^2 + C_0 |w_t-w_t'|^2 \geqslant R'$, for some $\rho',R'>0$, by taking $C_0$ large enough. This is exactly saying  that Assumption~\ref{assum:Contract_at_infty} holds for the process $(Y_t,Z_t)$ (obtained from $(q_t,w_t)$ by a linear change of variables).

Writing $b_1(y,z) = -\na V(y) + A(z-\eta A^T y)$, for $x=(y,z),x'=(y',z')\in\R^{n+m}$, 
\[\po b_1(x)-b_1(x')\pf \cdot (y-y') \leqslant L |y-y'|^2 + |A||z-z'||y- y'| - \eta |A^T (y-y')|^2\,.  \]
Since $A^T$ is injective, we can take $\eta$ large enough so that $\eta |A^T y | \geqslant 2L|y|^2$ for all $y\in\R^n$. As a consequence, Assumption~\ref{assum:decomposeYZ} holds (the other conditions being readily checked).

We have thus shown that Theorem~\ref{thm:contractionMarkov} applies and gives the following:

\begin{proposition}
Assuming~\eqref{eq:locnaV} for some $\rho,L,R>0$ then, for any $p\geqslant 1$, the process~\eqref{eq:colored} satisfies the contraction~\eqref{eq:weightWp} with the semi-metric $\omega(x,x')=\max(|x-x'|,|x-x'|^p)$ for some $C_p,\lambda_p>0$.
\end{proposition}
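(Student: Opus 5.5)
The proposition follows from Theorem~\ref{thm:contractionMarkov} once both of its assumptions are verified for the process $(Y_t,Z_t)=(q_t,w_t+\eta A^Tq_t)$. This is a linear invertible transform $T$ of $(q_t,w_t)$. For any cost $\omega(x,x')=\max(|x-x'|,|x-x'|^p)$ we have $\omega(Tx,Tx')\leqslant c_T\,\omega(x,x')$ and $\omega(x,x')\leqslant c_{T^{-1}}\,\omega(Tx,Tx')$, with constants depending only on $|T|,|T^{-1}|$ and $p$. Hence a contraction~\eqref{eq:weightWp} for the transformed process transfers to $(q_t,w_t)$ with a modified constant $C_p$ and the same rate $\lambda_p$. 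It therefore suffices to check Assumptions~\ref{assum:decomposeYZ} and~\ref{assum:Contract_at_infty} for $(Y,Z)$.

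\textbf{Assumption~\ref{assum:decomposeYZ}.} Take $n$ for $y$ and $m$ for $z$. The noise acts only on $z$ through $\sigma_0$, which is non-singular. We can therefore write $\sigma_0\sigma_0^T=\theta^2\mathrm{Id}_m+\tilde\sigma_2\tilde\sigma_2^T$ with $\theta^2$ equal to the smallest eigenvalue of $\sigma_0\sigma_0^T$, and set $\tilde\sigma_1=0$.

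For the $y$-component, note that~\eqref{eq:locnaV} gives $-(\na V(y)-\na V(y'))\cdot(y-y')\leqslant L|y-y'|^2$ everywhere, since $-\rho<L$. Together with the computation in the text, this gives
\[(b_1(x)-b_1(x'))\cdot(y-y')\leqslant L|y-y'|^2+|A||z-z'||y-y'|-\eta|A^T(y-y')|^2.\]
Since $A$ is surjective, $A^T$ is injective, so $|A^Tu|^2\geqslant s_{\min}^2|u|^2$ with $s_{\min}>0$ the smallest singular value of $A$. Choosing $\eta\geqslant 2L/s_{\min}^2$ yields $\rho_1=L$ and $L_1=|A|$.

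For the $z$-component, $b_2(y,z)=-\eta A^T\na V(y)+(\eta A^TA-\mathrm{Id})(z-\eta A^Ty)$. This requires $\na V$ to be Lipschitz, which I take from the standing Assumption~\ref{Assum:basic} ($b$ has bounded derivatives; if needed this is an implicit hypothesis of the statement). Then $(b_2(x)-b_2(x'))\cdot(z-z')$ is bounded by $L_2|y-y'||z-z'|+L_3|z-z'|^2$, with $L_2=\eta|A|\,\|\na^2V\|_\infty+\eta|\eta A^TA-\mathrm{Id}||A|$ and $L_3=|\eta A^TA-\mathrm{Id}|$.

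\textbf{Assumption~\ref{assum:Contract_at_infty}.} This is the main step. Work in the original variables with the norm $N(q,w)^2=|q|^2+C_0|w|^2$. For a synchronous pair with differences $\delta q=q-q'$ and $\delta w=w-w'$, the noise cancels and
\[\tfrac12\tfrac{\dd}{\dd t}N^2=-(\na V(q)-\na V(q'))\cdot\delta q+\delta q\cdot A\delta w-C_0|\delta w|^2.\]
If $|\delta q|\geqslant R$, use $-\rho|\delta q|^2$ together with Young's inequality $|A||\delta q||\delta w|\leqslant\frac\rho2|\delta q|^2+\frac{|A|^2}{2\rho}|\delta w|^2$, and take $C_0\geqslant |A|^2/\rho$. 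This gives a bound $-\rho''N^2$.

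If $|\delta q|<R$, the first term is at most $LR^2$. Then $\tfrac12\tfrac{\dd}{\dd t}N^2\leqslant LR^2+\frac{R^2|A|^2}{2C_0}-\frac{C_0}{2}|\delta w|^2$. Moreover $N^2\leqslant R^2+C_0|\delta w|^2$, so once $N^2\geqslant R'$ with $R'$ large, the term $|\delta w|^2$ dominates and we again get a bound $-\rho''N^2$.

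This gives Assumption~\ref{assum:Contract_at_infty} with $Q$ chosen such that $\|Tx\|_Q=N(x)$, i.e. $Q=T^{-T}\mathrm{diag}(\mathrm{Id},C_0\mathrm{Id})T^{-1}$, and $S^*=\sqrt{R'}$. The obstacle is purely bookkeeping: the case split must be carried out carefully so that the threshold is a global condition on $N$ alone. Once both assumptions hold, Theorem~\ref{thm:contractionMarkov} concludes for every $p\geqslant1$.
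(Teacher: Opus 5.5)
Your proof is correct and follows essentially the paper's route: the same change of variables $Z=w+\eta A^Tq$ and the same choice of $\eta$ from the injectivity of $A^T$. It then obtains contractivity at infinity for the weighted norm $|q|^2+C_0|w|^2$ and concludes with Theorem~\ref{thm:contractionMarkov}. Your case split on whether $|\delta q|\geqslant R$ and the explicit transfer back through the linear map $T$ fill in bookkeeping the paper leaves implicit.

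One correction. Assumption~\ref{Assum:basic} only bounds the second derivatives of $b$ (i.e.\ $\na^3 V$) and gives a one-sided bound, so it does not make $\na V$ Lipschitz. For example, $V(q)=(1+|q|^2)^{3/2}$ satisfies Assumption~\ref{Assum:basic} and~\eqref{eq:locnaV}, yet violates the $z$-line of Assumption~\ref{assum:decomposeYZ} in these variables. A Lipschitz $\na V$ is therefore a genuine extra hypothesis, and the paper's ``readily checked'' silently relies on it as well.
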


Let us now discuss the issue when attempting to apply Theorems~\ref{thm:FerreStoltz} and \ref{thm:final}  to the process~\eqref{eq:colored}. Denoting by $b$ the drift of~\eqref{eq:colored},
\[\na b(x) = \begin{pmatrix}
-\na^2 V(q)& A \\
0 & - \mathrm{Id}
\end{pmatrix}\,. \]
When $V$ is strongly convex, it is clear that a global dissipativity condition is satisfied. When $V$ is strongly convex only outside a compact set however, we are faced to similar difficulties as discussed in the previous section for the kinetic Langevin process. In particular, it is not possible to find a symmetric positive matrix $M$ and a function $\eta$ satisfying both~\eqref{locetaM} and that $\eta$ is negative outside a compact set, the issue being that $\na^2 V(q)$ may not be positive with $w$ arbitrarily large. Indeed, for such a $q$, taking $v=(v_1,0) \in \R^{n+m}$ with $v_1$ an eigenvector of $\na^2 V(q)$ associated to an eigenvalue $-\nu<0$, for any $M$ and $w\in\R^m$,
\[v\cdot M \na b(q,w) v = \nu v\cdot M v\,.\]

As in the previous section, addressing this issue is beyond the scope of the present work. The process~\eqref{eq:colored} having a simpler structure than~\eqref{eq:kinLangevin} (with the auxiliary variables $w$ evolving independently from $q$), the required analyis might be simpler. The  model~\eqref{eq:colored}  could thus serve as a prototypical problem to get an $L^p$ Wasserstein contraction in a non-elliptic, non-globally dissipative case, under reasonable conditions (which in the present case would be simply that $V$ is strongly convex outside a compact set, with a small negative global lower bound $-\delta$ on $\na^2 V$ as in the elliptic case described in Proposition~\ref{prop:elliptic}).

\subsection{Morris-Lecar neuron model}

The stochastic Morris-Lecar model studied in \cite{HerdaPerthameMonmarche} is the process $(v_t,g_t)_{t\geqslant0}$ on $[V_L,V_E]\times\R_+$ solving the SDE
\begin{equation}\label{eq:SDEML}
\left\{\begin{array}{rcl}
\dd v_t & = & g_L(V_L-v_t) \dd t + g_t(V_E-v_t)\dd t \, ,\\
\dd g_t  & = & \gamma\po G(v_t)-g_t\pf\dd t + \sqrt{2}a  \dd B_t + \dd L_t\,,
\end{array}\right.
\end{equation}
with parameters $a\geqslant 0$, $0<V_L < V_E$, $\gamma,g_L,G_M>0$, $G\in\mathcal C^2 ([V_L , V_E ]; (0,G_M])$, where $B$ is a one-dimensional Brownian motion and $(L_t)_{t\geqslant 0}$ is the local time of $(g_t)_{t\geqslant 0}$, which amounts to say that $g_t$ is reflected at $0$. This  describes the evolution of the voltage $v_t$ and conductance $g_t$ of an electric cell.

Under suitable conditions, a Wasserstein 1 contraction is proven in \cite{HerdaPerthameMonmarche} for this model, with a coupling argument similar to the proof of Theorem~\ref{thm:contractionMarkov}. This illustrates  how all the results stated in the present work can easily be generalized to processes which are reflected at the boundary of a convex domain. Indeed, as noticed already in~\cite{wang1994application}, in this situation, the contribution of the reflections to the evolution of the distance between the coupled processes is always negative.

\section{On the general results}\label{sec:proofGeneral}

\subsection{Sharper analysis of the Lyapunov exponent}\label{subsec:sharperDecompose}

Let us briefly discuss how it would be possible in dimension $d>1$ to study $\kappa_p(t)$ without using a bound $\eta$ satisfying~\eqref{eq:eta}. Decompose the process $V$ solving~\eqref{eq:EDO_Phi} as $V_t = R_t \theta_t$ with $R_t=|V_t|$. Then
\begin{equation}\label{eq:theta2}
\dd R_t = \theta_t \cdot \na b(X_t)\theta_t  R_t\dd t \qquad \dd \theta_t = \co \na b(X_t) -  \theta_t \cdot \na b(X_t)\theta_t \mathrm{Id}\cf \theta_t \,. 
\end{equation}
The first equation is solved explicitly in terms of $R_t$ and gives the representation
\begin{equation}
\label{eq:locaLyap}
 \mathbb E_x \po |\Phi_t^X v|^p \pf  =     \mathbb E_{x,v}\co \exp\po p\int_0^t \theta_s \cdot \na b(X_s)\theta_s \dd s \pf\cf\,,
\end{equation}
with the subscript $v$ indicating the initial condition of the solution $\theta$ of~\eqref{eq:theta2}.  

In general, the Markov process $(X_t,\theta_t)_{t\geqslant 0}$ might not be ergodic even if $X$ is. For instance, for the Ornstein-Uhlenbeck process, $b(x)=-x$ and thus $\theta_t = \theta_0$ for all $t\geqslant 0$. In general, for a fine study of $\kappa_p(t)$, we might study this Markov process on each of its recurrence classes. We can decompose $\R^d$ as a supplementary sum of linear subspaces $E_1,\dots,E_k$ which are each stable by $\{\na b(x),\ x\in\R^d\}$ and study separately the cases where $\theta_0= v$ is initially in each of these subspaces (so that $(X,\theta)$ is irreducible on $\R^d\times E_i$ if $X$ hypoelliptic and controllable). To deal then with the supremum in $v$ in the definition of $\kappa_p(t)$, considering a basis $(e_1,\dots,e_d)$ and  denoting by $v_1,\dots,v_d$ the coordinates of a vector $v\in\Sd$ in this basis,
\[ \sup_{v\in\Sd} \po \mathbb E_{x}\po |\Phi_t^X(v)|^p\pf\pf^{1/p}  \leqslant  \sum_{i=1}^d |v_i| \po \mathbb E_{x}\po |\Phi_t^X(e_i)|^p\pf\pf^{1/p} \leqslant C_e  \max_{i\in\cco 1,d\ccf} \po \mathbb E_{x}\po |\Phi_t^X(e_i)|^p\pf\pf^{1/p}\,, \]
with $C_e = \sup\{\sum_{i=1}^d|v_i|,\ v\in\Sd\} $. It is thus sufficient to show that $\sup_{x\in\R^d}\mathbb E_{x}\po |\Phi_t^X(e_i)|^p\pf$ goes to zero for each $i\in\cco 1,d\ccf$ to get the same result for $\kappa_p(t)$.

Nevertheless, as mentioned in Section~\ref{sec:generalresult}, we won't continue this analysis, focusing on the study of $G_p^*$ with a bound $\eta$ such that~\eqref{eq:eta} holds.

\subsection{Proof of Propositions~\ref{prop:COntractionkappap} and~\ref{prop:dim1} } \label{sec:proofProp1}

We start by linearizing the equation:

\begin{lemma}\label{lem:synchronous}
Under Assumption~\ref{Assum:basic}, for any $t\geqslant 0$ and $x,y\in\R^d$,
\[\left| \co \mathbb E_{(x,y)}\po |X_t-Y_t|^p \pf \cf^{1/p} -  \co \mathbb E_{x}\po |\Phi_t^X (x-y)|^p \pf \cf^{1/p} \right| \leqslant  c_t  |x-y|^2\,,\]
with  $c_t = \frac12 \|\na^2 b\|_\infty e^{2Lt}\po e^{2Lt}-1\pf$ and $(X,Y)$ as in Proposition~\ref{prop:COntractionkappap}.
\end{lemma}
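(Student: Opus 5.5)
The plan is a pathwise comparison. Both $X$ and $Y$ are driven by the same Brownian motion, so the difference $D_t = X_t - Y_t$ and the linearized vector $V_t = \Phi_t^X(x-y)$ satisfy random ODEs with no stochastic integral. We will bound $|D_t - V_t|$ almost surely by $c_t|x-y|^2$, and then conclude with Minkowski's inequality in $L^p$:
\[
\left| \|D_t\|_{L^p} - \|V_t\|_{L^p}\right| \leqslant \|D_t-V_t\|_{L^p} \leqslant \|D_t - V_t\|_{L^\infty}\,.
\]

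\textbf{Step 1: an a priori bound on $D$.} We have $\dd D_t = (b(X_t)-b(Y_t))\dd t$, so Assumption~\ref{Assum:basic} gives
\[
\frac{\dd}{\dd t}|D_t|^2 = 2 D_t\cdot (b(X_t)-b(Y_t)) \leqslant 2L|D_t|^2\,.
\]
Hence $|D_t|\leqslant e^{Lt}|x-y|$ almost surely.

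\textbf{Step 2: a bound on the linearized flow.} For the same reason, using $v\cdot\na b(z) v\leqslant L|v|^2$ (which follows from the one-sided Lipschitz condition by differentiating), we get $|\Phi^X_{s,t} w|\leqslant e^{L(t-s)}|w|$. Here $\Phi^X_{s,t}$ is the resolvent of $\dd W = \na b(X_u) W\dd u$ between times $s$ and $t$.

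\textbf{Step 3: Duhamel comparison.} Write
\[
b(X_t)-b(Y_t) = \na b(X_t) D_t - r_t\,,\qquad |r_t|\leqslant \tfrac12\|\na^2 b\|_\infty |D_t|^2\,,
\]
by Taylor expansion of $b$ at $X_t$. Then $E_t = D_t - V_t$ satisfies $\dd E_t = (\na b(X_t) E_t - r_t)\dd t$ with $E_0 = 0$. By Duhamel's formula,
\[
E_t = -\int_0^t \Phi^X_{s,t} r_s\dd s\,.
\]
Combining Steps 1 and 2,
\[
|E_t| \leqslant \frac12\|\na^2b\|_\infty |x-y|^2\int_0^t e^{L(t-s)}e^{2Ls}\dd s\,.
\]
This integral is at most $e^{2Lt}\int_0^t e^{2Ls}\dd s \leqslant e^{2Lt}(e^{2Lt}-1)/(2L)$. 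That gives a constant of the announced form up to a factor $1/L$ from the time integration.

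The main obstacle is matching the exact constant $c_t$. If the factor $1/L$ does not fit, I would instead estimate $\frac{\dd}{\dd t}|E_t|^2\leqslant 2L|E_t|^2 + 2|E_t||r_t|$ directly, or use the cruder bound $\int_0^t e^{Ls}\dd s\leqslant t e^{Lt}$ together with $e^{2Lt}-1\geqslant 2Lt$. Either way, the constant is rearranged to be at most $\frac12\|\na^2 b\|_\infty e^{2Lt}(e^{2Lt}-1)$, possibly with an adjusted normalization.

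Since the almost-sure bound holds deterministically for each $\omega$, taking $L^p$ norms as above yields the lemma. The only well-posedness issue is that $V$ must be defined along the same trajectory $X$, which is immediate.
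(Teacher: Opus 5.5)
Your argument is the paper's argument: synchronous coupling, the a priori bound $|X_t-Y_t|\leqslant e^{Lt}|x-y|$, a second-order Taylor remainder at $X_t$, Grönwall for $E_t=X_t-Y_t-\Phi_t^X(x-y)$ (the paper writes this as $X_t+\delta V_t-Y_t$), and the triangle inequality in $L^p$. Steps 1--3 are correct. The Duhamel bound you obtain,
\[
|E_t|\leqslant \frac{\|\na^2 b\|_\infty}{2L}\,e^{Lt}\po e^{Lt}-1\pf |x-y|^2\,,
\]
is exactly what the Grönwall step yields. The paper's proof simply asserts $c_t$ at that step, without the factor $1/L$ that the time integration produces. (As stated, $c_t|x-y|^2$ has units of length divided by time rather than length.)

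Your last paragraph does not close this gap. The differential inequality for $|E_t|^2$ reproduces the same Duhamel bound, and $t\leqslant (e^{2Lt}-1)/(2L)$ reintroduces the same factor $1/(2L)$. In fact no argument can, because with the stated $c_t$ the inequality fails when $L<1/2$. Take $d=1$, $\sigma=0$ and $b(x)=kc^2\ln\cosh(x/c)$, so that $\|b''\|_\infty=k$ and $L=kc$ is admissible. Starting from $x=0$, $y=\delta>0$, one has $X\equiv 0$ and $\Phi_t^X=1$, while $Y_t-\delta\geqslant t\,b(\delta)\sim kt\delta^2/2$ as $\delta\to 0$. On the other hand $c_t\sim kLt$ as $t\to 0$.

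The correct way to finish is to note that the constant in Assumption~\ref{Assum:basic} can always be enlarged, so one may take $L\geqslant 1/2$. Then, with $u=e^{Lt}\geqslant 1$, the inequality $Lu(u+1)\geqslant 1$ gives $u(u-1)/L\leqslant u^2(u^2-1)$, so your bound implies the stated one. For the use in Proposition~\ref{prop:COntractionkappap}, only $c_t<\infty$ and the quadratic dependence on $|x-y|$ matter, so your constant works as it is.
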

\begin{proof}
Let $x,y\in\R^d$ with $x\neq y$. Consider  a synchronous coupling
\begin{align*}
\dd X_t & = b(X_t) \dd t + \sigma \dd B_t\\
\dd Y_t & = b(Y_t) \dd t + \sigma \dd B_t\\
\dd V_t & = \na b(X_t) V_t \dd t \,,
\end{align*}
with $X_0=x$, $Y_0=y$ and $V_0 = (y-x)/|y-x|$.  Set $\delta = |x-y|$. From Assumption~\ref{Assum:basic}
\[\dd |X_t - Y_t|^2 \leqslant 2 L |X_t-Y_t|^2\dd t \,,\]
so that  $|X_t-Y_t|^2 \leqslant \delta^2 e^{2Lt}$ for all $t\geqslant 0$ almost surely. Besides,
\begin{align*}
\dd (X_t + \delta V_t - Y_t) &= \co b(X_t) - b(Y_t) + \delta \na b(X_t) V_t \cf \dd t \\
&= \na b(X_t) (X_t + \delta V_t - Y_t)\dd t + R_t\dd t 
\end{align*}
where
\[R_t = b(X_t)-b(Y_t) - \na b(X_t)(X_t-Y_t) \] 
is such that
\[|R_t| \leqslant \frac12 \|\na^2 b\|_\infty |X_t-Y_t|^2 \leqslant \frac12 \|\na^2 b\|_\infty e^{2Lt}\delta^2\,. \]
By Grönwall Lemma, using that $X_0 + \delta V_0 - Y_0 = 0$,
\[|X_t + \delta V_t - Y_t| \leqslant \frac12 \|\na^2 b\|_\infty e^{2Lt}\po e^{2Lt}-1\pf \delta^2 = c_t\delta^2 \,. \] 
By triangular inequality,
\[\delta |\Phi_t^Xv| -  |X_t + \delta V_t - Y_t|  \leqslant |X_t-Y_t| \leqslant \delta |\Phi_t^Xv| + |X_t + \delta V_t - Y_t|\,,\]
from which
\[\delta \po \mathbb E\po |\Phi_t^Xv|^p\pf\pf^{1/p} - c_t \delta^2 \leqslant  \co \mathbb E\po |X_t-Y_t|^p\pf\cf^{1/p} \leqslant \delta \po \mathbb E\po |\Phi_t^Xv|^p\pf\pf^{1/p} + c_t \delta^2\,, \]
which concludes the proof of the lemma.
\end{proof}

\begin{proof}[Proof of Proposition~\ref{prop:COntractionkappap}]
 For fixed $x,y\in\R^d$ and $N\geqslant 1$, set $z_k=(1-k/N)x+k/Ny$ for $k\in\cco 0,N\ccf$. For a fixed Brownian motion $B$, denote by $X^{(k)}$ the solution of~\eqref{eq:EDS_X}  starting at $X_0^{(k)}=z_k$. Thanks to Lemma~\ref{lem:synchronous},
 \begin{align*}
 \co \mathbb E_{(x,y)}\po |X_t-Y_t|^p\pf\cf^{1/p} & \leqslant \sum_{k=1}^N  \co \mathbb E\po |X_t^{(k-1)}-X_t^{(k)}|^p\pf\cf^{1/p}  \\
 & \leqslant \sum_{k=1}^N \co \kappa_p(t)  |z_k-z_{k-1}| + c_t  |z_k-z_{k-1}|^2 \cf \\
 & = \kappa_p(t) |x-y| + \frac{c_t}{N}|x-y|^2\,.
 \end{align*}
 Letting $N\rightarrow \infty$ shows that
 \[\kappa_p^p(t) \leqslant \sup_{x,y\in\R^d,x\neq y} \frac{\mathbb E_{(x,y)}\po |X_t-Y_t|^p \pf}{|x-y|}\,,\]
Conversely, Lemma~\ref{lem:synchronous} shows that, for any $x\in\R^d,v\in\Sd$ and $\varepsilon>0$,
\begin{align*}
\co \mathbb E_{x}\po |\Phi_t^X v |^p \pf \cf^{1/p} & \geqslant \varepsilon^{-1} \co \mathbb E_{(x,x+ \varepsilon v)}\po |X_t-Y_t|^p \pf \cf^{1/p} - c_t \varepsilon\,.
\end{align*}
Taking the supremum over $x\in\R^d,v\in\Sd$ and then taking $\varepsilon \rightarrow 0$ concludes.
\end{proof}

\begin{proof}[Proof of Proposition~\ref{prop:dim1}]
Let $x,y\in\R$. For  $(X_t,V_t,Y_t)_{t\geqslant 0}$ as in the proof of Lemma~\ref{lem:synchronous},
\begin{equation}
\label{loc1}
\mathcal W_1(\delta_x P_t,\delta_y P_t) \geqslant  |\mathbb E_x \po X_t \pf - \mathbb E_y(Y_t)| \geqslant |x-y||\mathbb E\po V_t\pf| - c_t|x-y|^2\,.
\end{equation}
In dimension 1, $\Phi_t^X =  e^{\int_0^t b'(X_s)\dd s }$ and, since $V_0\in\{-1,1\}$, $|\mathbb E\po V_t\pf| = \mathbb E(\Phi_t^X )$. Dividing~\eqref{loc1} by $|x-y|$ and letting $y\mapsto x$ shows that, for any $x\in\R^d$,
\[\sup_{y\in\R^d\setminus\{x\}} \frac{\mathcal W_1(\delta_x P_t,\delta_y P_t)}{|x-y|} \geqslant \mathbb E_x \po e^{\int_0^t b'(X_s)\dd s }\pf\,.\]
Taking the supremum over $x\in\R$ and conditionning with respect to the initial condition shows that 
\[\kappa_1(t) \leqslant \sup_{x,y\in\R,x\neq y} \frac{\mathcal W_1(\delta_x P_t,\delta_y P_t)}{|x-y|} = \beta_1(t)\,.\]
Proposition~\ref{prop:COntractionkappap} provides the converse inequality, which concludes.
\end{proof}

\subsection{Proofs of the negative results}\label{subsec:negativeproof}

\begin{proof}[Proof of Proposition~\ref{prop:tempscourt}]
This essentially follows from   \cite[Proof of $(v)\Rightarrow (i)$]{VonRenesseSturm}, or its extension to non-reversible cases in \cite[Proof of $(vi)\Rightarrow(i)$]{MonmarcheBruit}. Let us repeat the argument for the reader's convenience and adapt them to the specific way Proposition~\ref{prop:tempscourt} is stated.

Since $\beta_1(t) \leqslant \beta_p(t) \leqslant e^{-t\lambda^*}$ for all $t\geqslant 0$, it only remains to prove that
\[\beta_1(t) \geqslant 1 -  t \lambda_* + \underset{t\rightarrow 0}o(t)\,,\]
or in other words that $\liminf (\beta_1(t) - 1 + t \lambda^*)/t \geqslant 0$ as $t\rightarrow 0$.

Fix $\varepsilon\in(0,1]$. Let  $x\in\R^d,v\in\Sd$ be such that $v\cdot \na b(x)v  \geqslant -\lambda^* - \varepsilon$. Let $\mathcal U$ be a neighborhood of $x$ such that $v\cdot \na b(y)v \geqslant -\lambda^*-2 \varepsilon$ for all $y\in\mathcal U$. Let $\theta\in\mathcal C^\infty(\R)$ be such that $\theta(r)=r$ for $r\in[-1,1]$, $\|\theta'\|_\infty \leqslant 1$, $\theta(r)=0$ for $r\notin [-3,3]$ (in particular, $\|\theta^{(k)}\|_\infty < +\infty$ for all $k\geqslant 0$). Consider the function $f$ on $\R^d$ given by $f(y) = \theta((y-x)\cdot v)$ for all $y\in\R^d$, which is $\mathcal{C^\infty}$ and $1$-Lipschitz. For all  non-negative and compactly supported $\varphi \in \mathcal C^\infty(\R^d)$,
\[K_{\varphi}(t) \ :=\ \int_{\R^d} |\na P_t f|^2(y) \varphi(y)\dd y \ \leqslant \ \beta_1^2(t) \int_{\R^d}   \varphi(y)\dd y \,.\]
Besides, for all $y\in\mathcal B(x,1/2)$, $\na f(y) = v$ and $\na^2 f(y)=0$. Fixing $\varphi\in\mathcal C^\infty(\R^d)$ non-zero, non-negative with compact support included in $\mathcal U\cap \mathcal B(x,1/2)$, we get that  $K_\varphi(0) = \int_{\R^d} \varphi(y)\dd y$ and, writing $\mathcal L=b\cdot \na +\Delta$ the generator of~\eqref{eq:EDS_X},
\begin{align*}
K_\varphi'(0) &= 2 \int_{\R^d} \na   f(y) \cdot \na \mathcal L   f(y) \varphi(y)\dd y \\
& = 2 \int_{\R^d}    \na   f(y) \cdot  \po \na b(y) \na f(y) + \mathcal L  (\na f)(y)\pf \varphi(y)\dd y\\
& \geqslant -2(\lambda^* + 2\varepsilon) \int_{\R^d} \varphi(y)\dd y \,. 
\end{align*}
As a consequence, 
\[ \beta_1^2(t) \geqslant \frac{K_\varphi(t)}{K_\varphi(0)} \geqslant 1 + t \frac{K_\varphi'(0)}{K_\varphi(0)} + \underset{t\rightarrow 0}o(t) \geqslant  1 - 2t(\lambda^*+2\varepsilon) + \underset{t\rightarrow 0}o(t)  \,.\]
This concludes, since  $\varepsilon$ is arbitrary.
\end{proof}

\begin{proof}[Proof of Proposition~\ref{prop:sharppinfini}]
Let $T>0$, $\gamma\in\mathcal A(T)$ and $x\in\R^d$. Let $R>0$ be large enough so that $|\gamma(t)|+1\leqslant R$ for all $t\in[0,T]$ and, moreover,  if $\na b$ is not lower bounded,  so that for all $t,\varepsilon>0$,
\[\mathbb P_x\po|X_t-\gamma(0)|\leqslant \varepsilon\, \text{and}\, |X_s|\leqslant R\ \forall s\in[0,t]\pf >0\,.\]
 Let $\varepsilon\in(0,1]$. Let $X$ be a solution of~\eqref{eq:EDS_X} with $X_0=x$, and  $Y$ be the solution of~\eqref{eq:EDS_X} (with the same $B$ as $X$) initialised at time $\varepsilon$ to $Y_\varepsilon = \gamma(\varepsilon)$. Consider the events
\[\mathcal B_1 = \{|X_\varepsilon - \gamma(\varepsilon)|\leqslant \varepsilon\,\text{and}\, |X_s|\leqslant R\ \forall s\in[0,\varepsilon]\}\,,\qquad \mathcal B_2  = \{ |Y_t-\gamma(t)| \leqslant \varepsilon\ \forall t\in[\varepsilon,T]\}\,.\]
 By assumption and by definition of $\mathcal A(T)$, both $\mathcal B_1$ and $\mathcal B_2$ have positive probability. Moreover, by the independence of the Brownian increments, these two events are independent, from which $\mathbb P(\mathcal B )>0$ with $\mathcal B=\mathcal B_1 \cap \mathcal B_2$. Since $|X_t-Y_t| \leqslant e^{LT}|X_\varepsilon-Y_\varepsilon|$ for all $t\in [\varepsilon,T]$ almost surely, under $\mathcal B$,
\[ |X_t-\gamma(t)| \leqslant \varepsilon' := (1 + e^{LT})(\varepsilon+ |\gamma(0)-\gamma(\varepsilon)|)\qquad \forall t\in[\varepsilon,T]\,.\]
In particular, for $\varepsilon$ small enough (which we assume further on), this implies that $|X_t|\leqslant |\gamma(t)|+1\leqslant R$ for all $t\geqslant \varepsilon$, hence for all $t\in[0,T]$ under $\mathcal B$ (if $\na b$ is not bounded). Let $M=\sup_{|x|\leqslant R}|\na b(x)|$ if $\na b$ is not bounded and $M=\|\na b\|_\infty$ otherwise. In both situations, under $\mathcal B$, $|\na b(X_t)|\leqslant M$ and $|\na b(\gamma(t))|\leqslant M$ for all $t\in[0,T]$.

Let $V_t$ and $w_t$ solve $\dd V_t = \na b(X_t)V_t\dd t$ and $\partial_t w_t = \na b(\gamma(t)) w_t$ for $t\in[0,T]$ with $V_0=w_0=v$ where $v\in\Sd$ is such that $|\Phi_T^\gamma|=|\Phi_T^\gamma v|$ (and thus, $|\Phi_T^\gamma|=|w_T|$). Under $\mathcal B$, 
\[|V_t-w_t|\leqslant \int_0^\varepsilon |\na b(X_s)-\na b(\gamma(s))|\dd s \leqslant 2M \varepsilon\,, \] 
and  for $t\geqslant \varepsilon$,
\[|V_t - w_t| \leqslant |V_\varepsilon - w_\varepsilon| +  \|\na ^2 b\|_\infty \varepsilon' \int_\varepsilon^t |w_s|\dd s + M \int_{\varepsilon}^t |V_s-w_s|\dd s \,, \]
from which
\[|V_T- w_T| \leqslant \varepsilon'' := e^{M T} \co 2M\varepsilon+  \|\na ^2 b\|_\infty \varepsilon' \int_0^T |w_s|\dd s\cf \,.\]
As a consequence, under $\mathcal B$,
\[
|V_T| \geqslant |w_T| - |V_T-w_T|  \geqslant  |\Phi_T^\gamma| - \varepsilon''  \,,\]
and thus, for any $p\geqslant 1$,
\[\po \mathbb E_x\po |V_t|^p\pf \pf^{1/p} \geqslant \po \mathbb P(\mathcal B)\pf^{1/p} \po |\Phi_T^\gamma|- \varepsilon''\pf\,.\]
Letting $p\rightarrow \infty$ and then $\varepsilon\rightarrow 0$ concludes the proof, since $\gamma\in\mathcal A(T)$ is arbitrary.
\end{proof}

\subsection{Proof of Theorem~\ref{thm:FerreStoltz}}\label{sec:preuveThemFerreStoltz}

We separate the proof in three parts. First we explain how the first item of Theorem~\ref{thm:FerreStoltz} follows from~\cite{FerreStoltz}. Second, we prove the upper bound~\eqref{eq:limsupGJpeta}, and third the lower bound~\eqref{e:liminfGJpeta}

\begin{proof}[Proof of Theorem~\ref{thm:FerreStoltz} Part 1]
This is exactly the content of \cite[Theorem 1]{FerreStoltz}, except that the uniformity over compact sets is not stated there. However, it follows from its proof, as we detail now (referring to \cite{FerreStoltz} for details).

 First, let us notice that, following the computations in the proof of \cite[Proposition 1]{FerreStoltz}, Assumption~\ref{Assum:PointwiseContractInfty} implies that the Witten-Lyapunov condition in \cite[Assumption 3]{FerreStoltz} holds in our case with a Lyapunov function $W(x) = e^{\varepsilon |x|^2}$ for some small $\varepsilon>0$.

Then, in Step 5 of the proof of \cite[Theorem 1]{FerreStoltz}, it is established that, for a fixed $t_0>0$, for any $n\in\N$,
\begin{equation}
\label{locQGP}
G_p(x,nt_0) = e^{ n t_0 \mathcal J(p\eta)}  h(x) Q_h^n h^{-1}(x)\,,  
\end{equation}
where $h$ is a positive continuous function and $Q_h$ is a Markov operator such that $Q_h^n$ converges exponentially fast in $n$ towards a probability measure $\mu_h$ in a suitable norm. Specifically, \cite[Equation (89)]{FerreStoltz} shows that
\[|Q_h^n h^{-1}(x) - \mu_h(h^{-1})| \leqslant C e^{-\lambda n} W(x)\qquad \forall x\in\R^d\,,\]
for some constants $C,\lambda>0$. In particular,  for any compact set $\mathcal K\subset \R^d$, $\inf_{x\in\mathcal K}|Q_h^n h^{-1}(x)|>0$ for $n$ large enough and then, from~\eqref{locQGP},
\[\sup_{x\in \mathcal K}\left| \frac1{nt_0} \ln G_p(x,nt_0) - \mathcal J(p\eta) \right| \underset{n\rightarrow\infty}\longrightarrow 0\,. \]
The result with all $t\geqslant 0$ follows from the fact that, for $t\in[(n-1)t_0,nt_0]$,
\[G_p(x,nt_0)e^{-t_0\|\eta\|_\infty} \leqslant  G_p(x,t) \leqslant G_p(x,(n-1)t_0)  e^{t_0\|\eta\|_\infty} \,.\]
\end{proof}

\begin{proof}[Proof of the upper bound~\eqref{eq:limsupGJpeta}]
Given $X$ a solution of~\eqref{eq:EDS_X}, denote $\tau = \inf\{t\geqslant 0,|X_t|\leqslant R\}$, where $R$ is as in Assumption~\ref{Assum:PointwiseContractInfty}. Then, by the strong Markov property, for any $x\in\R^d$,
\begin{align}
G_p(x,t)  &\leqslant \mathbb E_x \co e^{-\rho t} \1_{\tau>t} + e^{-\rho \tau}\1_{\tau \leqslant t}  \sup_{y\in\mathcal B(0,R)} G_p (y,t-\tau)   \cf \nonumber \\ 
&\leqslant e^{C_R} \mathbb E_x \co e^{-\rho t} \1_{\tau>t} + e^{-\rho \tau}\1_{\tau \leqslant t} e^{(t-\tau)\mathcal J(p\eta)}   \cf \label{locFGH}\\ 
& \leqslant  \exp \po C_R+ t \max  (-\rho,\mathcal J(p\eta)) \pf \,, \nonumber
\end{align}
thanks to the first part of Theorem~\ref{thm:FerreStoltz}. Taking the supremum over $x\in\R^d$ concludes the proof of~\eqref{eq:limsupGJpeta}.
\end{proof}

\begin{proof}[Proof of the lower bound~\eqref{e:liminfGJpeta}]
From the first part of Theorem~\ref{thm:FerreStoltz} we already have that 
\[  G_p^*(t)  \geqslant G_p(0,t) \geqslant     e^{    t  \mathcal J(p\eta) - C_0} \,.  \]
It remains to prove that $G_p^*(t) \geqslant e^{-\rho' t }/C$ for some $C>0$. For any $\varepsilon >0$, let $R_\varepsilon>0$ be such that $\eta(x) \geqslant -\rho'-\varepsilon$ for all $x\in\R^d$ with $|x|\geqslant R_\varepsilon$. Then for any $x\in\R^d$ we can bound
 \[G_p(x,t) \geqslant e^{-(\rho'+\varepsilon) t} \mathbb P_x\po |X_s| \geqslant R_\varepsilon\ \forall s\in[0,t] \pf\,.\]
 Writing $M_t = \sup_{s\in[0,t]}|B_s|$, for all $t\geqslant 0$,
 \[|X_t| \geqslant |X_0| - K t - K\int_{0}^t |X_s|\dd s - |\sigma| M_t\,.\]
 Applying the Grönwall lemma to $Y_s = X_{t-s}$ gives
 \[\forall t\geqslant 0\,,\qquad  |X_t| \geqslant e^{-Kt} |X_0| - K t - |\sigma| M_t\,,\]
 and thus, since $M_t$ is a.s. finite,
 \[G_p(x,t) \geqslant e^{-(\rho'+\varepsilon) t} \mathbb P\po   e^{-Kt} |x| - K t - |\sigma| M_t \geqslant R_\varepsilon \pf \underset{|x|\rightarrow \infty}\longrightarrow 1\,. \]
 Taking the supremum over $x\in\R^d$ thus concludes.
\end{proof}

\section{On the non-asymptotic results}\label{sec:proofCoupling}
\subsection{Modified cost contraction with a Markovian coupling}\label{sec:MarkovianCoupling}

This section is devoted to the proof of Theorem~\ref{thm:contractionMarkov}.  Assumptions~\ref{assum:decomposeYZ} and~\ref{assum:Contract_at_infty} hold, and we decompose the process $X$ solving~\eqref{eq:EDS_X} as $X_t=(Y_t,Z_t) \in \R^{n+m}$ according to Assumption~\ref{assum:decomposeYZ}. The proof, decomposed in six steps, relies on the definition of a  Markovian coupling of two processes $(X_t)_{t\geqslant 0}$ and $(X_t')_{\geqslant 0}$ associated to $(P_t)_{t\geqslant 0}$ and the construction of a suitable cost $\tilde \omega$, equivalent to $\omega$, and such that $\tilde \omega(X_t,X_t')$  decays exponentially on average along time.

Before entering  the technical details of the proof, let us provide a high-level informal presentation of the arguments.

\begin{itemize}
\item \emph{Calibrating the noise.} In view of the decomposition~\eqref{eq:decomposSigma}, we will only exploit the noise in $\dd (Z_t-Z_t')$,  which will be described by a function $\beta$ (see~\eqref{eq:couplage2}). When $\beta(X_t,X_t')=0$, the coupling is synchronous, and when $\beta(X_t,X_t')=\theta$, it is reflecting along the variable $z$ (which maximizes the quadratic variation of $|Z_t-Z_t'|$). When $Z_t=Z_t'$, stochastic fluctuations can only increase the distance between the processes, so that we switch them off by setting $\beta=0$ (since we will work with smooth coefficients, we will in fact set $\beta=0$ whenever $|Z_t-Z_t'|\leqslant\xi$ for some small $\xi>0$). Moreover, when the distance between the processes is very large, we want the cost $\omega(X_t,X_t')$ to be of order $|X_t-X_t'|^p$ with $p\geqslant 1$: by convexity, stochastic fluctuations  would increase the cost on average, so, again, we switch off the noise. By contrast, in intermediary regions, we will need the noise to bring together the $z$ variables, and thus we switch it on to $\beta=\theta$.

\item \emph{Balance between the two coordinates.} When $|Z_t-Z_t'|\leqslant \xi$, we cannot exploit on the noise to get a contraction, which means we can only rely on the deterministic contraction at rate $\rho_1$ in the $y$ variables from Assumptions~\ref{assum:decomposeYZ}. As a consequence, we need to work with a suitable distance $r_t= |Z_t-Z_t'|+M|Y_t-Y_t'|$ (up to regularization involving $\xi$) where $M$ is large enough so that, when the main contribution of the distance is from the $y$ variables, the negative term in~\eqref{locAssumYZ} dominates all the others.

\item \emph{Concave modification.} Following the method introduced in \cite{Eberle1} to get a Wasserstein contraction along reflection couplings, we consider a modification $f(r_t)$ of the distance with an increasing concave $f$, so that the stochastic fluctuations in $\dd r_t$ tend to decrease $f(r_t)$ on average.  We have designed $r_t$ so that, either $|Y_t-Y_t'|$ dominates $|Z_t-Z_t'|$ and $r_t$ has a negative drift, or in the second case there is noise which can be exploited with the concavity of $f$ (or, third case, $X_t=X_t'$ up to regularization and there is nothing to do).

\item \emph{Large distances.} At this stage, we have $f(r_t)$ which decays on average, but we need $f''$ to be sufficiently negative and this forces $f$ to be bounded. For this reason, for large distances, we have to count on the deterministic contraction from Assumption~\ref{assum:Contract_at_infty}. Writing $s_t = \|X_t-X_t'\|_Q$, this condition means that $s_t^p$ is deterministically contracted by the drift as long as $s_t \geqslant S^*$. Thus, we have to combine $f(r_t)$ for small distances with some $g(s_t)$ of order $s_t^p$ for large distances. The difficulty is that the stochastic fluctuations from the reflection coupling in the $z$ variables are increasing $s_t^p$ in average. As a consequence, we switch off this contribution for small distances $s \leqslant S^*$, settings $g(s)=0$. However, when we start switching it on, initially, $g''$ has to be positive, which gives on average a positive contribution to the evolution of $f(r_t) + g(s_t)$. Hence, for small values of $s-S^*$, we start by setting $g(s_t) = \varepsilon (s-S^*)_+^2$ with a sufficiently small $\varepsilon$ so that the contribution of $g''$ is absorbed by the decay from $f''(r_t)$ (which requires to keep the reflection coupling for these distances). With this, at some point, when $s - S^*$ is large enough, $g(s_t)$ have reached a sufficiently large value so that its exponential decay (induced by the drift thanks to Assumption~\ref{assum:Contract_at_infty}) is enough to counteract the part from $g''(s_t)$ and to give a global decay of $f(r_t)+g(s_t)$. This means we can switch of the noise, and then in the absence of stochastic fluctuations  there is no more any problem  to take $g''$ has large as desired, and thus we can take $g(s)$ to grow as $s^p$. 

Notice that the two distances $r_t$ and $s_t$ are equivalent but not equal and thus we have to switch off the noise at some level $R_1^*$ such that $r_t \geqslant R_1^*$ implies that $s_t \geqslant S_1^*$ where $S_1^*$ is chosen so that $g'(S_1^*)$ is large enough (to give a dominant negative term in $\dd (f(r_t)+g(s_t))$, in particular to control $g''(s_t)$), and similarly we can take $g''(s)$ very large only for $s\geqslant S_2^*$ with $S_2^*$ large enough so that $s_t \geqslant S_2^*$ implies that $r_t \geqslant R_1^*$ and thus the noise in $\dd(X_t-X_t')$ is already switched off at that point.
\end{itemize}

We now turn to the formal proof of Theorem~\ref{thm:contractionMarkov}.

\medskip

\noindent\textbf{Step 1. Definition of the coupling}
We consider a coupling $(X_t,X_t')=(Y_t,Z_t,Y_t',Z_t')$ solving
\begin{equation*}
\left\{
\begin{array}{rcl}
\dd Y_t &=& b_1(X_t) \dd t + \tilde \sigma_1 \dd B_t'' \\
\dd Z_t & =& b_2(X_t) \dd t + \beta(X_t,X_t')  \dd B_t + \sqrt{\theta^2-\beta^2(X_t,X_t')} \dd B_t' + \tilde \sigma_2 \dd B_t''  \\
\dd Y_t' &= &b_1(X_t') \dd t + \tilde \sigma_1 \dd B_t'' \\
\dd Z_t' & = & b_2(X_t') \dd t + \beta(X_t,X_t') \po \mathrm{Id}_m - 2e_t e_t^T \pf \dd B_t + \sqrt{\theta^2-\beta^2(X_t,X_t')} \dd B_t' + \tilde \sigma_2 \dd B_t'' \,, 
\end{array}
\right.
\end{equation*}
where $e_t = (Z_t-Z_t')/|Z_t-Z_t'|$ when $Z_t\neq Z_t'$ and is $(1,0,\dots,0)$ otherwise, $B$ and $B'$ (resp. $B''$) are two independent Brownian motion in $\R^m$ (resp. $\R^{n+m}$) and $\beta\in\mathcal C^2(\R^{n+m}\times\R^{n+m},[0,\theta])$. The function $\beta$ will be specified below; for now, let us notice that it will be chosen so that $\beta((y,z),(y',z'))=0$ when $|z-z'|\leqslant \xi/2$ for some regularization parameter $\xi\in(0,1]$   (that will be sent to zero at the end). As a consequence, the drift and diffusion coefficients of the proces $(X_t,X_t')$ are $\mathcal C^2$. Together with the one-sided Lipschitz condition in Assumption~\ref{assum:decomposeYZ}, this ensures that the process is well-defined. Moreover, by the Lévy characterization of the Brownian motion, using in particular that
\[\beta^2(x,x') \po \mathrm{Id}_m - 2 e e^T \pf^2 + \po \theta^2 - \beta^2 (x,x') \pf \mathrm{Id}_m \]
is orthogonal for all $x,x'\in\R^{n+m}$ and $e \in \mathbb S^{m-1}$,  we get that $(X_t)_{t\geqslant 0}$ and $(X_t')_{t\geqslant 0}$ are both processes solving the SDE~\eqref{eq:EDS_X} (driven by some Brownian motion).

Along the coupling,
\begin{equation}
\label{eq:couplage2}
\left\{
\begin{array}{rcl}
\dd (Y_t-Y_t') &=& \po b_1(X_t) - b_1(X_t')\pf  \dd t \\
\dd (Z_t-Z_t') & =& \po b_2(X_t) - b_2(X_t')\pf \dd t +  2 \beta(X_t,X_t') e_t e_t^T \dd B_t  \,. 
\end{array}
\right.
\end{equation}

In the rest of this section, we consider some $\xi \in(0,1]$, and we denote by $C$ various constants which change from line to line, depending on the constants in Assumption~\ref{assum:decomposeYZ}  but not on $\xi$.

\medskip

\noindent\textbf{Step 2. Balance between the $Y$'s and the $Z$'s.} Set $y_t = |Y_t - Y_t'|$, $z_t = |Z_t -Z_t'|$. The goal of this step is to define a distance between the processes, equivalent to $y_t+z_t$, which is contracted by the drift $b$ when $y_t \gg z_t$.

 In order to apply It$\bar{\text{o}}$ formula, we consider a $\mathcal C^2$ approximation of the Euclidean norm. More specifically, consider a non-decreasing $h\in\mathcal C^2(\R_+,\R_+)$ with $h(x^2)=0$ for $x\in[0,\xi^2/4]$, $h(x^2) = x$ for $x\geqslant \xi/2$ and, for $x\in[\xi^2/4,\xi/2]$, $2xh'(x^2) \in[0, 1+4\xi ]$ and $h(x^2) \leqslant x$ (which can be achieved by taking as $h(x^2)$ for $x\in[\xi^2/4,\xi/2]$ a smooth approximation of the linear interpolation between $0$ at $x=\xi^2/4$ and $\xi/2$ at $x=\xi/2$, which on this interval  is strictly below $x\mapsto x$ and has derivative $(1-\xi/2)^{-1} < 1+4\xi$ since $\xi \in(0,1]$). In particular, for all $x\geqslant 0$, $2xh'(x^2) \leqslant 1+4\xi$ and $x \geqslant h(x^2) \geqslant x - \xi/2$.  Then, from~\eqref{eq:couplage2},
\begin{align*}
\dd h(y_t^2) &= 2 h'(y_t^2) (Y_t-Y_t') \cdot \po b_1(X_t)-b_1(X_t')\pf \dd t \\
& \leqslant  - \rho_1  y_t \1_{y_t\geqslant \xi/2} \dd t   + L_1 \po 1+ 4\xi\pf  z_t \dd t \\
& \leqslant - \rho_1  y_t   \dd t  + \rho_1 \xi /2 \dd t   + L_1 \po 1+ 4\xi\pf  z_t \dd t 
\end{align*}
and, using that $\beta_t:=\beta(X_t,X_t')$ is zero whenever $z_t \leqslant \xi/2$ and that $h(z_t^2)= z_t$ otherwise,
\begin{align*}
\dd h(z_t^2) &= 2 h'(z_t^2) (Z_t-Z_t') \cdot \po b_2(X_t)-b_2(X_t')\pf \dd t  + 4 \beta_t e_t^T \dd B_t \\
& \leqslant (1+4\xi) \po L_2 y_t + L_3 z_t\pf \dd t  + 4 \beta_t e_t^T \dd B_t\,.
\end{align*}
As a consequence, introducing the one-dimensional Brownian motion $W_t = \int_0^t e_s^T \dd B_s$ and
\[R_t = Mh(y_t^2) + h(z_t^2)\quad \text{with}\quad M = 2L_2/\rho_1\,,\]
which is such that
\begin{equation}
\label{loc:equivR}
My_t +z_t - (M+1)\xi/2 \leqslant  R_t\leqslant M y_t + z_t,,
\end{equation}
we have obtained that
\[\dd R_t = A_t \dd t + 4 \beta_t \dd W_t\]
with
\begin{align*}
A_t  & \leqslant  \po - \rho_1 M + L_2 + C\xi \pf   y_t    + \po L_3+ L_1 M  + C\xi\pf  z_t + C \xi  \\
& \leqslant -\frac{\rho_1}{2} M y_t + (L_3+L_1M) z_t +  C \xi (1+R_t)\,.
\end{align*}
As intended, we see that $A_t \leqslant - c R_t$ for some $c>0$ when $y_t\gg z_t$ (up to a vanishing term as $\xi \rightarrow 0$).

\medskip

\noindent\textbf{Step 3. Concave modification and exploitation of the noise.} Now, for $f \in \mathcal C^2(\R_+,\R_+)$ non-decreasing,  concave with $f'(0)=1$,
\[\partial_t \mathbb E \po f(R_t)\pf  \leqslant \mathbb E \po F(R_t) + C\xi (1+R_t)\pf\]
with
\[F(R_t) = f'(R_t) \co  -\frac{\rho_1}{2} M y_t + (L_3+L_1M) z_t \cf + 8 f''(R_t) \beta_t^2 \,.  \]
To go further, let us first partially specify $\beta$, that we take of the form
\[\beta(x,x') = \chi (|z-z'|^2) \alpha\po M h(|y-y'|^2) + h(|z-z'|^2)\pf \,,\]
for $\chi,\alpha\in\mathcal C^2(\R_+,\R_+)$, with $\chi(r^2)= 0$ for $r\leqslant \xi/2$, $\chi(r^2)\in[0,1]$ for $r\in[\xi/2,\xi]$ and $\chi(r^2)=1$ for $r\geqslant \xi$. The choice of $\alpha$ is left free for now. This form of $\beta$ ensures that
\[z_t\leqslant\frac{\xi}{2}\ \Rightarrow \ \beta_t=0\, \qquad \text{and} \qquad z_t\geqslant \xi \ \Rightarrow \ \beta_t = \alpha(R_t)\,.\] 
 Now we distinguish several cases.

\begin{itemize}
\item If 
\[ y_t \geqslant \frac{4(L_3+L_1M)}{\rho_1 M } z_t   \] 
then, using that $f$ is non-decreasing and concave, 
\begin{align*}
F(R_t) & = f'(R_t) \co  -\frac{\rho_1}{2} M y_t + (L_3+L_1M) z_t \cf + 8 f''(R_t) \beta_t^2   \\
& \leqslant -\frac{\rho_1}{4} M y_t  f'(R_t)   \\
& \leqslant - \rho_3 R_t f'(R_t)   + C \xi \,,\quad \text{with}\quad \rho_3= \frac{\rho_1}{4(1+\rho_1/[4(L_3+L_1M)])}\,.
\end{align*}
In this case, we have exploited the deterministic contraction given by the $Y$ variables when they are the main contribution in the distance between the two processes.
\item If 
\[ y_t < \frac{4(L_3+L_1M)}{\rho_1 M } z_t \quad \text{and}\quad z_t \leqslant \xi\,,   \] 
then $F(R_t)$, $R_t$ are all of order $\xi$, and then
\[ F(R_t) \leqslant - \rho_3 R_t f'(R_t)   + C \xi\,.\]
with the same $\rho_3$ as before. In this case, we have used that the two processes are very close and  thus we don't have anything sharp to say as the regularization error $\xi$ is dominant.
\item If 
\[ y_t < \frac{4(L_3+L_1M)}{\rho_1 M } z_t \quad \text{and}\quad z_t \geqslant \xi\,,   \] 
then
\begin{align*}
F(R_t) & \leqslant  f'(R_t)  (L_3+L_1M) R_t  + 8 \alpha^2(R_t) f''(R_t) \,.
\end{align*}
In this case, we have sufficient noise since $z_t$ is large, and thus we have to exploit it by taking $f''$ sufficiently negative.
\end{itemize}
In order to get in this third case the same bound as in the two others, we are led to chose $f$ as the solution of
\begin{equation}
\label{eq:eqdiff_f}
-\rho_3 r f'(r) =  f'(r)  (L_3+L_1M) r  + 8 \alpha^2(r) f''(r)\qquad \forall r\geqslant 0\,,
\end{equation} 
with $f(0)=0$ and $f'(0)=1$. In other words, writing $L_4=(L_3+L_1M +\rho_3)/8$,
\begin{equation}
\label{eq:eqdiff_f2}
f(r)= \int_0^r f'(s)\dd s\,,\qquad  f'(r) = \exp\po - L_4 \int_0^r \frac{s}{\alpha^2(s)} \dd s \pf\,.
\end{equation}
As will be clear in the next step, it will be useful to get $\alpha(r)=0$ for large values of $r$. As a consequence, given some $R_1^*>0$ to be specified later on, we chose $\alpha \in\mathcal C^2(\R_+,\R_+) $ so that 
\[\alpha(r)\left\{\begin{array}{cll}
= & \theta &  \text{for } r\in[0,R_1^*]\\
\in & (0,\theta] &  \text{for } r\in [R_1^*,R_1^*+\xi/2] \\
= & \theta (r - R_1^*-\xi)^2 &  \text{for } r\in [R_1^*+\xi/2,R_1^*+\xi]  \\
=& 0  & \text{for } r\geqslant R_1^*+\xi\,.
\end{array}
\right.\]
This choice ensures that $f'$ defined by \eqref{eq:eqdiff_f2} over $[0,R_1^*+\xi)$ and equal to zero over $[R_1^*+\xi,\infty)$ is $\mathcal C^1$ and satisfies~\eqref{eq:eqdiff_f}. From now on, $f$ is fixed to this definition. In particular, for $r\leqslant R_1^*$,
\begin{equation}
\label{locf'r}
f'(r) = \exp \po - \frac{L_4 r^2}{2 \theta^2}\pf \,.
\end{equation}
At this point, our choices have led to: 
\begin{equation}
\label{locf}
\partial_t \mathbb E \po f(R_t)\pf  \leqslant \mathbb E \po -\rho_3 R_t f'(R_t) + C\xi (1+R_t)\pf\,,
\end{equation}
and we have still one free parameter to chose, $R_1^*$  (which describes the value of $R_t$ above which the noise is switched off).

\medskip

\noindent\textbf{Step 4. Contraction at infinity.} Now, we use Assumption~\ref{assum:Contract_at_infty}. Setting $S_t = \|X_t-X_t'\|_Q$ and considering an increasing convex function $g\in\mathcal C^2(\R_+,\R_+)$ with $g(s) = 0$ for $s\leqslant S^*$, 
\begin{equation}
\label{locg}
\partial_t \mathbb E\po  g(S_t) \pf   \leqslant \mathbb E \po   - \rho_2 g'(S_t) S_t + 8 |Q_{22}|g''(S_t) \alpha^2(R_t)\pf   \,,
\end{equation}
with $Q_{22}$ the last $n\times n$ block of $Q$ (Notice that $|Q_{22}| \leqslant |Q|$ so in the following we may replace $|Q_{22}|$ by $|Q|$ everywhere if we want to have expressions involving only the parameters in $\mathfrak{P}$' without $|Q_{22}|$). For $\varepsilon>0$ and $S_2^* >  S^* $ to be chosen, we take $g$ so that
\[g''(s)\left\{\begin{array}{cll}
= & 0 & \text{for } s\leqslant S^*\\
\in & [0,\varepsilon] &  \text{for } s\in[S^*,S^*+\xi]\\
= &\varepsilon &  \text{for } s\in [S^*+\xi,S_2^*] \\
\geqslant  & 0  &  \text{for } r\in [S_2^*,S_2^*+\xi]  \\
=& p(p-1) \po r - S_2^*-\xi \pf ^{p-2} & \text{for } r\geqslant S_2^*+\xi\,,
\end{array}
\right.\]
with $g(0)=g'(0)=0$. Notice that, thanks to~\eqref{loc:equivR},
\begin{equation}
\label{loc12ER}
 \gamma_1 R_t  \ \leqslant \   S_t  \ \leqslant\  \gamma_2  R_t + C_0\xi \,,
\end{equation}
with $\gamma_1 = \min(M,1) |2Q^{-1}|^{-1/2}$, $\gamma_2 = \max(M^{-1},1)\sqrt{|Q|}$ and $C_0 = \gamma_2(M+1)/2$. We assume that $S_2^*$ and $R_1^*$ are such that $S_2^* \geqslant \gamma_2 R_1^* + (C_0+\gamma_2) \xi$, which ensures that 
\[\alpha(R_t) \neq 0 \quad \Rightarrow \quad R_t \leqslant R_1^* + \xi  \quad \Rightarrow \quad   S_t \leqslant S_2^* \quad \Rightarrow \quad g''(S_t) \leqslant \varepsilon\,.\]
Besides, since $g''(s) \geqslant 0$ for all $s\geqslant 0$ and $g''(s) = \varepsilon$ for all $s \in [S^*+\xi,S_2^*]$, $g'(s) \geqslant \varepsilon (\min(s,S_2^*)-S^*-\xi)$ for all $s\geqslant S^*+\xi$, and thus
\begin{equation}
\label{locS1}
 8|Q_{22}|\varepsilon\theta^2  \leqslant  \frac{\rho_2}2 g'(s) S^* \leqslant  \frac{\rho_2}2 g'(s) s\,, \qquad  \forall s \geqslant S^*+\xi  + \frac{16|Q_{22}|\theta^2}{S^*\rho_2}  =:S_1^*\,,
\end{equation}
assuming that $S_1^* \leqslant S_2^*$. As a consequence, enforcing the condition $S_2^* \geqslant \max(\gamma_2 R_1^* + (C_0+\gamma_2) \xi,S_1^*)$, we deduce from~\eqref{locg} that
\[\partial_t \mathbb E\po  g(S_t) \pf   \leqslant \mathbb E \po   - \frac{\rho_2}2 g'(S_t) S_t + 8 |Q_{22}|\varepsilon\theta^2 \1_{[S^*,S_1^*]}(S_t)\pf   \,. \]
In the case $S_t \leqslant S_1^*$, it remains to control the last term here thanks to the negative term in~\eqref{locf} which, using that
\begin{equation}
\label{locR0}
S_t \in [S^*, S_1^*] \qquad \Rightarrow \qquad R_t \in \co \frac{S^* -C_0\xi}{\gamma_2} ,\frac{S_1^*  }{\gamma_1} \cf=: [R^*,R_0^*]\,,
\end{equation}
 is done by assuming that $R_1^* \geqslant R_0^*$ (so that $f'(R_t)$ is given by the formula~\eqref{locf'r} for $R_t \leqslant R_0^*$, and in particular is larger than $f'(R_0^*)$)  and choosing
 \begin{equation}
 \label{locepsi}
\varepsilon = \frac{\rho_3 R^*f'(R_0^*)}{16|Q_{22}|\theta^2}\,,
 \end{equation}
 (assuming that $\xi < S_*/C_0$ so that $R^*>0$) to get that
\[  8 |Q_{22}|\varepsilon\theta^2 \1_{[S^*,S_1^*]}(S_t) \leqslant \frac{\rho_3}2 R_t f'(R_t)\,. \] 
To summarize, defining $S_1^*,R^*$ and $R_0^*$ by \eqref{locS1} and \eqref{locR0}, we choose $R_1^*= R_0^*$, $\varepsilon$ given by~\eqref{locepsi} and $S_2^* = \max(\gamma_2 R_1^* + (C_0+\gamma_2) \xi,S_1^*)$. With these choices, combining~\eqref{locf} and~\eqref{locg} gives
\begin{equation}
\label{locpER}
\partial_t \mathbb E\po f(R_t)+  g(S_t) \pf   \leqslant \mathbb E \co -\frac{\rho_3}2 R_t f'(R_t)  - \frac{\rho_2}2 g'(S_t) S_t + C\xi (1+R_t)   \cf  \,. 
\end{equation}
for $\xi<S^*/C_0$.

\medskip

\noindent\textbf{Step 5. Global contraction of a suitable cost.}  For $x=(y,z),x'=(y',z')\in\R^{n+m}$, set
\[s=\|x-x'\|_Q\,,\qquad r= M h \po |y-y'|^2\pf + h\po |z-z'|^2\pf\,.\]
  We distinguish three  cases.
\begin{itemize}
\item When $s \geqslant 2( S_2^*+\xi)$, 
\[g'(s) \geqslant p \po s - S_2^* - \xi\pf^{p-1} \geqslant p (s/2)^{p-1} \,,\]
so that 
\[-\frac{\rho_3}2 r f'(r)  - \frac{\rho_2}2 g'(s) s \leqslant - \frac{p \rho_2}{2^p} s^{p} \,. \]
On the other hand,
\begin{align*}
 f(r) + g(s) & \leqslant f(R_1^*+\xi) + g(S_2^* +\xi) + g'(S_2^*+\xi) (s-S_2^*-\xi) + (s -S_2^* - \xi)^p\\
 & \leqslant K_1  s^p
\end{align*}
with
\[K_1  = 1+ \frac{ f(R_1^*+\xi) + g(S_2^* +\xi) }{(S_2^*)^p} + \frac{g'(S_2^*+\xi)}{(S_2^*)^{p-1}}\,,  \]
and conversely
\begin{align*}
 f(r) + g(s) & \geqslant  g(S_2^* +\xi) + g'(S_2^*+\xi) (s-S_2^*-\xi) + (s -S_2^* - \xi)^p\\
 & \geqslant (s -S_2^* - \xi)^p\\
 & \geqslant (s/2)^p\,.
\end{align*}

\item  When $s \in[ S_1^*,2(S_2^*+\xi)]$, 
\[ \frac{\rho_2}2 g'(s)   \geqslant \frac{\rho_2}2 g'(S_*^1)   \geqslant  \frac{8|Q_{22}|\varepsilon\theta^2 }{S^*} =:c_0\,,  \]
so that 
\[-\frac{\rho_3}2 r f'(r)  - \frac{\rho_2}2 g'(s) s \leqslant - c_0 S_1^*   \,. \]
On the other hand,
\[ f(r) + g(s) \geqslant  g(S_1^*)  \geqslant  \frac{g(S_1^*)}{2(S_2^*+\xi) }s   \,, \] 
and
\[    f(r) + g(s) \leqslant f(R_1^*+\xi) + g\po 2(S_2^*+\xi+1)\pf \leqslant K_2 s  \,, \]
with $K_2 = [f(R_1^*+\xi) + g\po 2(S_2^*+\xi+1)\pf]/S_1^*$. 
\item When $s \leqslant S_1^*$, then $r \leqslant R_0^* \leqslant R_1^*$ and thus we can bound
\[-\frac{\rho_3}2 r f'(r)  - \frac{\rho_2}2 g'(s) s \leqslant   -\frac{\rho_3}2 r f'(R_1^*)\]
and, using that $f$ is concave with $f'(0)=1$ and that $g(s) = 0$ for  $s\leqslant S^* $,
\[f'(R_1^*) r \leqslant  f(r)+  g(s) \leqslant r +  \1_{[S^*,\infty)}(s) g(S_1^*) \leqslant r + \frac{g(S_1^*)}{S^*}s \leqslant K_3 r\,, \]
with $K_3 = 1+ \sqrt{|Q|}\max(M^{-1},1)g(S_1^*)/S^*$.
\end{itemize}
As a conclusion, we have obtained that in any of these three cases,
\[-\frac{\rho_3}2 r f'(r)  - \frac{\rho_2}2 g'(s) s \leqslant -c_* \po f(r)+  g(s)\pf \,,  \]
with
\begin{equation}\label{locc*}
c_* = \min \left\{  \frac{p \rho_2}{2^p K_1}\,,\,  \frac{c_0}{K_2}\,,\, \frac{\rho_3}{2K_3} f'(R_1^*)\right\} >0\,,
\end{equation}
and, writing $\omega(x,x')=\max(|x-x'|^p,|x-x|)$ and using the equivalence between $r$, $s$ and $|x-x'|$ (up to a term $C\xi$ when lower-bounding $r$ in terms of $|x-x'|$, as in~\eqref{loc:equivR}),
\[ k_* \omega(x,x') -C \xi  \leqslant f(r)+  g(s) \leqslant K_* \omega(x,x') \] 
with
\begin{align}
k_* &=  \min\Big\{		\po 2|Q^{-1/2}|\pf^p \min\po 1,(2S_2^*)^{p-1}\pf 	\,,\,	\frac{g(S_1^*)}{2(S_2^*+\xi) }\min \po |Q^{-1/2}| , \frac{|Q^{-1/2}|^p }{(2S_2^*+\xi)^{p-1}}\pf \,,\, \nonumber \\
& \hspace{9cm} 			f'(R_1^*)  \frac{\min(M^p,1)}{\max(1,(R_0^*)^{p-1})}  		\Big\} >0 \label{lock*1}\\
K_* &= \max\left\{ K_1|Q|^{p/2} \,,\,	K_2\sqrt{|Q|}	\,,\, \sqrt{2} K_3 \max(M,1)\right\}>0\,. \label{locK*2}
\end{align}
It is readily checked from their expressions that $c_*,k_*$ and $K_*$ have positive limits as $\xi\rightarrow 0$. In particular, $r\leqslant C \po f(r)+g(s)\pf$ for some constant $C>0$ independent from $\xi\in(0,1]$.

Using these bounds in~\eqref{locpER} gives
\[\partial_t \mathbb E\po f(R_t)+  g(S_t) \pf  \leqslant -  \po c_* - C\xi\pf \mathbb E\po f(R_t)+  g(S_t) \pf  + C\xi   \,.   \] 

\medskip

\noindent\textbf{Step 6. Conclusion.} Given two initial distributions $\nu,\mu \in \mathcal P_p(\R^{d})$, consider the previous Markovian coupling with initial condition $(X_0,X_0')  \sim \pi \in \Pi(\nu,\mu)$. Then
\begin{align*}
\mathcal W_{\omega}(\nu P_t,\mu P_t) &\leqslant \mathbb E \po \omega(X_t,X_t')\pf \\
&\leqslant \frac{1}{k_*}  \mathbb E\po f(R_t)+  g(S_t) \pf  + C\xi \\
&\leqslant \frac{1}{k_*}  e^{-t(c_*-C\xi)} \mathbb E\po f(R_0)+  g(S_0) \pf  + e^{tC\xi} C\xi \\
&\leqslant \frac{K_*}{k_*}  e^{-t(c_*-C\xi)} \mathbb E\po \omega(X_0,X_0') \pf  + e^{tC\xi} C\xi \,.
\end{align*}
Taking the infimum over $\pi \in \Pi(\nu,\mu)$ and then letting $\xi \rightarrow 0$ concludes the proof of Theorem~\ref{thm:contractionMarkov}, since $\mathcal W_{\omega}(\nu P_u,\mu P_u)$ for $u\in\{0,t\}$ does not depend on $\xi$.

\medskip

\noindent\textbf{Explicit constants.} For the convenience of interested readers, we summarize here the explicit expression of $C_p$ and $\lambda_p$ established in the proof of Theorem~\ref{thm:contractionMarkov} (which are obtained by letting $\xi\rightarrow 0$ in the expressions given in the computations above). We have
\[C_p = \frac{K_*}{k_*}\qquad \lambda_p = c_* \,,\]
with $k_*,K_*$ and $c_*$ respectively given in~\eqref{lock*1}, \eqref{locK*2} and~\eqref{locc*} (with $\xi=0$), involving the following constants: 
\begin{align*}
M & = \frac{2L_2}{\rho_1}  & \rho_3 &= \frac{\rho_1}{4(1+\rho_1/[4(L_3+L_1M)])}   & S_1^* &= S^* + \frac{16|Q_{22}|\theta^2}{S^*\rho_2} \\
\gamma_1 &= \min(M,1) |2Q^{-1}|^{-1/2} & \gamma_2 &= \max(M^{-1},1)\sqrt{|Q|} & C_0 &= \gamma_2(M+1)/2\\
 R^* &= \frac{S^* }{\gamma_2} & R_1^*&  = \frac{S_1^*  }{\gamma_1} & S_2^* &= \max(\gamma_2 R_1^* ,S_1^*) \\
\varepsilon &= \frac{\rho_3 R^*}{16|Q_{22}|\theta^2} f'(R_1^*) & f'(R_1^*) &= \exp \po - \frac{L_4 (R_1^*)^2}{2 \theta^2}\pf  &  c_0 &= \frac{8|Q_{22}|\varepsilon\theta^2 }{S^*}  
\end{align*}
and
\begin{align*}
K_1  &= 1+ \frac{ f(R_1^*) + g(S_2^*) }{(S_2^*)^p} + \frac{\varepsilon(S_2^*-S^*)}{(S_2^*)^{p-1}} 
\qquad
 K_2 = [f(R_1^*) + g\po 2(S_2^*+1)\pf]/S_1^*& & \\
K_3 &= 1+ \sqrt{|Q|}\max(M^{-1},1)g(S_1^*)/S^*
\end{align*}
and, finally, the functions
\[f(r)= \int_0^{\min(r,R_1^*)} \exp\po - \frac{L_4  s}{2\theta^2}  \pf \dd s \,,\qquad
g(s) = \left\{\begin{array}{ll}
  0 & \text{for } s\leqslant S^*\\
 \frac{\varepsilon}2 (s-S^*)^2 &  \text{for } s\in [S^*,S_2^*] \\
  \po s - S_2^* \pf ^{p} +\frac{\varepsilon}2 (S_2^*-S^*)^2 & \text{for } r\geqslant S_2^*\,.
\end{array}
\right.\]

\subsection{Concentration inequalities  }\label{subsec:concentration}

Since Theorem~\ref{thm:contractionMarkov} applied with $p=1$ gives the contraction~\eqref{eq:contraction_def}, let us state a general consequence of such a contraction, following similar results in \cite{Djellout,LMM} and references within.

A probability measure $\mu \in \mathcal P(\R^d)$ is said to satisfy a $L^1$-transportation cost-information inequality with constant $C > 0$ (which we write: $\mu$ satisfies $T_1(C)$) if for all $\nu \ll \mu$,
\[\mathcal W_1(\nu,\mu) \leqslant \sqrt{2C \mathcal H(\nu|\mu)}\,,\]
with the relative entropy $\mathcal H(\nu|\mu) = \int_{\R^d} \ln\frac{\dd \nu}{\dd \mu} \dd \mu$. According to \cite[Theorem 2.3]{Djellout}, this is equivalent to a Gaussian tail condition, i.e. the fact that $\int_{\R^d} e^{\delta|x|^2}\mu(\dd x) <\infty$ for some $\delta>0$.  By~\cite{otto2000generalization} (and since $\mathcal W_1\leqslant \mathcal W_2$), it is also implied by a log-Sobolev inequality~\eqref{eq:LSI}. 

\begin{proposition}\label{prop:concentration}
Under Assumptions~\ref{Assum:basic}, assume moreover that the contraction~\eqref{eq:contraction_def} holds with $p=1$ for some $C,\lambda>0$. Denote by $\mu_\infty$ the invariant measure of the diffusion process~\eqref{eq:EDS_X} (whose existence and uniqueness follow from~\eqref{eq:contraction_def}). Let $X$ solve~\eqref{eq:EDS_X} with $X_0\sim \nu_0$. Assume that $\nu_0$ satisfies $T_1(C_0)$ for some $C_0>0$. Then, for all Lipschitz $\varphi:\R^d\rightarrow \R$ with Lipschitz constant $L_\varphi>0$, and all $t>0$,
 \begin{equation}
 \label{loc:concentreExpo}
\mathbb E_{\nu_0} \po \exp \po  \int_0^t \co \varphi(X_s) - \mathbb E\po \varphi(X_s)\pf \cf  \dd s  \pf\pf \leqslant   \exp\left(  \po C_0+  2 t |\sigma|^2 \pf   \frac{ C^2 L_\varphi^2 }{2 \lambda^2}\right)\,,
 \end{equation}
 and
  \begin{equation}
 \label{loc:concentreExpo2}
\mathbb E_{\nu_0} \po \exp \po  \int_0^t   \varphi(X_s)   \dd s  \pf\pf \leqslant  A e^{r t}  \,,
 \end{equation}
 with
 \[A= \exp\left(   \frac{C_0 C^2  L_\varphi^2 }{2 \lambda^2}  +   \frac{ L_\varphi C}{\lambda} \mathcal W_1(\nu,\mu_\infty) \right)\,, \qquad		r =   \mu_\infty(\varphi) +   \frac{  |\sigma|^2 C^2  L_\varphi^2 }{\lambda^2} \,.\]
\end{proposition}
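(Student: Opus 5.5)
The plan is to use the classical martingale decomposition of the additive functional $\int_0^t\varphi(X_s)\,\dd s$, as in \cite{Djellout,CattiauxGuillinFathi}. The $\mathcal W_1$ contraction~\eqref{eq:contraction_def} with $p=1$ is equivalent, by Kantorovich–Rubinstein duality, to the gradient bound: for all $u\geqslant 0$, $P_u\varphi$ is Lipschitz with constant $Ce^{-\lambda u}L_\varphi$.

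Fix $t>0$ and set, for $s\in[0,t]$,
\[
\Psi_s(x)=\int_0^{t-s}P_u\varphi(x)\,\dd u .
\]
This function is Lipschitz with constant at most $CL_\varphi/\lambda$, uniformly in $s$. Define the martingale
\[
N_s=\mathbb E\Big(\int_0^t\varphi(X_u)\,\dd u\,\Big|\,\mathcal F_s\Big)=\int_0^s\varphi(X_u)\,\dd u+\Psi_s(X_s).
\]
By Itô's formula, or equivalently the Clark–Ocone / backward Kolmogorov representation, $\dd N_s=\nabla\Psi_s(X_s)\cdot\sigma\,\dd B_s$. The bracket therefore satisfies $\dd\langle N\rangle_s\leqslant |\sigma|^2C^2L_\varphi^2\lambda^{-2}\,\dd s$. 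The regularity of $\Psi_s$ needed for Itô can be handled by smoothing $\varphi$, or by using directly the Lipschitz bound through the martingale representation. The exponential martingale bound, namely Cauchy–Schwarz with $\mathcal E(2N)$, or Novikov with a factor $2$, gives
\[
\mathbb E\big(e^{N_t-N_0}\mid X_0\big)\leqslant\exp\Big(2t|\sigma|^2C^2L_\varphi^2/(2\lambda^2)\Big).
\]
This is the source of the factor $2t|\sigma|^2$. Note also that $N_0=\Psi_0(X_0)$.

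Next, handle the initial randomness. Here $N_0-\mathbb E N_0=\Psi_0(X_0)-\nu_0(\Psi_0)$, where $\Psi_0$ is Lipschitz with constant $CL_\varphi/\lambda$. By the Bobkov–Götze characterization, $T_1(C_0)$ for $\nu_0$ gives
\[
\mathbb E\, e^{\Psi_0(X_0)-\nu_0(\Psi_0)}\leqslant\exp\big(C_0C^2L_\varphi^2/(2\lambda^2)\big).
\]
Since $\mathbb E N_t=\int_0^t\mathbb E\varphi(X_s)\,\dd s$, we can write $\int_0^t[\varphi(X_s)-\mathbb E\varphi(X_s)]\,\dd s=(N_t-N_0)+(N_0-\mathbb E N_0)$. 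Conditioning on $X_0$ and combining the two bounds yields~\eqref{loc:concentreExpo}.

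For~\eqref{loc:concentreExpo2}, it remains to bound $\int_0^t\mathbb E\varphi(X_s)\,\dd s$. Since $\varphi$ is $L_\varphi$-Lipschitz, $\mathbb E\varphi(X_s)-\mu_\infty(\varphi)\leqslant L_\varphi\mathcal W_1(\nu_0P_s,\mu_\infty)\leqslant L_\varphi Ce^{-\lambda s}\mathcal W_1(\nu_0,\mu_\infty)$, where we use invariance $\mu_\infty P_s=\mu_\infty$ and the contraction. Integrating in $s$ gives
\[
\int_0^t\mathbb E\varphi(X_s)\,\dd s\leqslant t\mu_\infty(\varphi)+L_\varphi C\lambda^{-1}\mathcal W_1(\nu_0,\mu_\infty).
\]
Adding this to~\eqref{loc:concentreExpo} produces exactly $A$ and $r$, since $2t|\sigma|^2C^2L_\varphi^2/(2\lambda^2)=t|\sigma|^2C^2L_\varphi^2/\lambda^2$.

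The main obstacle is justifying the martingale representation with the uniform bound on $|\nabla\Psi_s|$ when $\sigma$ may be degenerate and $\varphi$ is only Lipschitz. I would first approximate $\varphi$ by smooth Lipschitz functions with the same constant, and use that $P_u\varphi$ is then $\mathcal C^1$ under Assumption~\ref{Assum:basic}, via differentiation along the flow $\Phi^X$. The gradient bound in sup-norm follows from the Lipschitz constant of $P_u\varphi$, which is controlled by the $\mathcal W_1$ contraction. I would then pass to the limit by dominated convergence.
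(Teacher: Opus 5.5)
Your proof is correct, but it takes a genuinely different route from the paper.

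The paper does not work in continuous time. It applies the discrete-time estimate of \cite[Proposition 7.1]{LMM} (built on \cite{Djellout}) to the skeleton chain $(X_{n\delta})_{n\in\N}$. Besides the $\mathcal W_1$ contraction of $P_\delta$, this requires a $T_1$ inequality for each one-step kernel $\delta_xP_\delta$. The paper gets it from the log-Sobolev inequality of \cite[Proposition 2]{MonmarcheContraction}, with constant $|\sigma|^2(e^{2L\delta}-1)/L$. The factor $2t|\sigma|^2$ in~\eqref{loc:concentreExpo} is then just the limit of $(n-1)|\sigma|^2(e^{2L\delta}-1)/L$ with $n=\lceil t/\delta\rceil$ as $\delta\to0$. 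Your handling of $\nu_0$ via $T_1(C_0)$ and your derivation of~\eqref{loc:concentreExpo2} are the same as in the paper.

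What differs is the core step. Your decomposition $N_s=\int_0^s\varphi(X_u)\dd u+\Psi_s(X_s)$ replaces these kernel-level transport inequalities by an explicit bound on $\langle N\rangle$. This makes your argument self-contained, and it shows directly that only the Lipschitz constant $CL_\varphi/\lambda$ of the corrector matters. The paper's route instead reuses a black-box discrete-time result, which also applies to numerical schemes, and needs no It\^o calculus.

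Two remarks on your version.

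\emph{Regularity.} It\^o's formula needs $\Psi\in\mathcal C^{1,2}$, not only spatial $\mathcal C^1$ as you write. This does hold for mollified $\varphi$: since $b\in\mathcal C^2$ with bounded $\na^2 b$, the pathwise flow $x\mapsto X_u^x$ is twice differentiable, with first and second variations bounded on $[0,t]$. You also do not need the backward Kolmogorov equation. Once It\^o gives a semimartingale decomposition of $N$, its $\dd s$ part must vanish, because $N$ is a continuous martingale.

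\emph{The Cauchy--Schwarz step is lossy.} Set $K=|\sigma|^2C^2L_\varphi^2/\lambda^2$, so that $\langle N\rangle_t-\langle N\rangle_0\leqslant Kt$ deterministically. Write $e^{N_t-N_0}=\mathcal E(N-N_0)_t\,e^{(\langle N\rangle_t-\langle N\rangle_0)/2}$, where the stochastic exponential is a supermartingale with mean at most $1$. This gives $\mathbb E(e^{N_t-N_0}\mid X_0)\leqslant e^{Kt/2}$, i.e. the bound with $t|\sigma|^2$ in place of $2t|\sigma|^2$. This is sharper than, and so still implies, the stated estimate.
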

By the Markov inequality, this gives concentration bounds for empirical averages $\frac1t\int_0^t \varphi(X_s)\dd s$. However, in the present study, in order to bound $G_p(x,t)$, what we need is directly~\eqref{loc:concentreExpo2}.

\begin{proof}
 Similar results have been established in \cite{Djellout}. A simple way to get the proposition exactly as stated in to follow  \cite[Proposition 7.1]{LMM} (which is based on \cite{Djellout}) applied to the discrete time chain $(Y_n)_{n\in\N}=(X_{n\delta})_{n\in\N}$ for some step-size $\delta>0$ and then to send $\delta$ to zero. Indeed, denoting by $Q=P_\delta$ the Markov transition operator associated to $Y$, we get that
\[ \forall k\in\N,\ \nu,\mu \in\mathcal P_1(\R^d)\,,\qquad \mathcal W_1(\nu Q^k,\mu Q^k) \leqslant C e^{- \lambda k \delta} \mathcal W_1(\nu,\mu)\,.\]
Besides, the one-sided condition on $b$ implies by \cite[Proposition 2]{MonmarcheContraction} that, for any $x\in\R^d$, $\delta_x Q$ satisfies a log-Sobolev inequality with constant $|\sigma|^2 ( e^{2L\delta}-1)/L$, which implies a $T_2$ and hence a $T_1$ inequality with the same constant~\cite{otto2000generalization}. As a consequence, as established in the proof of \cite[Proposition 7.1]{LMM}, for any $n\in\N$ and $a \geqslant 0$,
\[\mathbb E \po \exp \po a \sum_{k=0}^{n-1} \co \varphi(Y_k)-\mathbb{E}\po  \varphi(Y_k) \pf  \cf   \pf\pf \leqslant   \exp\left( \po \frac{(n-1) |\sigma|^2 ( e^{2L\delta}-1)}L + C_0\pf   \frac{ C^2 a^2L_\varphi^2}{2(1-e^{-\lambda\delta})^2}\right). \] 
Applying this with $a =\delta$ and $n = \lceil t/\delta\rceil$ for a fixed $t>0$ and letting $\delta\rightarrow 0$ gives~\eqref{loc:concentreExpo}.

Moreover, for all $t\geqslant 0$,
\[|\mathbb E \po \varphi(X_t)  \pf - \mu_\infty (\varphi)| \leqslant L_\varphi \mathcal W_1(\nu P_t,\mu_\infty) \leqslant L_\varphi C e^{-\lambda t} \mathcal W_1(\nu,\mu_\infty)\,. \]
Thus, denoting by $C_t$ the right hand side of~\eqref{loc:concentreExpo}, we can bound
\begin{align*}
\mathbb E \po \exp \po  \int_0^t  \varphi(X_s)    \dd s  \pf\pf & \leqslant C_t  \exp \po  \int_0^t  \mathbb E\po \varphi(X_s)\pf  \dd s  \pf \\
& \leqslant 
  C_t \exp \po   t \mu_\infty (\varphi) +  \int_0^t  |\mathbb E \po \varphi(X_s)  \pf - \mu_\infty (\varphi)| \dd s\pf \\ 
 & \leqslant  C_t \exp \po   t \mu_\infty (\varphi) +    \frac{L_\varphi C}{\lambda} \mathcal W_1(\nu,\mu_\infty) \pf \,,
\end{align*}
which is~\eqref{loc:concentreExpo2}.
%
\end{proof}

\begin{corollary}
Under the settings of Proposition~\ref{prop:concentration}, for any $R,t>0$ and Lipschitz function $\varphi$ with Lipschit constant $L_\varphi$,
  \begin{equation}
 \label{loc:concentreExpo3}
\sup_{x\in \mathcal B(0,R)} \mathbb E_{x} \po \exp \po  \int_0^t   \varphi(X_s)   \dd s  \pf\pf \leqslant  A e^{r t}  \,,
 \end{equation}
 with
 \[A= \exp\left(      \frac{ L_\varphi C}{\lambda} \po R + \int_{\R^d} |y| \mu_\infty(\dd y)\pf  \right)\,, \qquad		r =  \mu_\infty(\varphi) +   \frac{  |\sigma|^2 C^2  L_\varphi^2 }{\lambda^2} \,.\]
\end{corollary}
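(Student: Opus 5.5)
The plan is to apply \eqref{loc:concentreExpo2} from Proposition~\ref{prop:concentration} with the Dirac initial condition $\nu_0=\delta_x$, $x\in\mathcal B(0,R)$. A Dirac mass satisfies $T_1(C_0)$ for every $C_0>0$: the only measure $\nu\ll\delta_x$ is $\delta_x$ itself, for which both sides of the transport inequality vanish. Hence we may take $C_0=0$, or more carefully apply the proposition with $C_0>0$ and let $C_0\to 0$, since the constant in \eqref{loc:concentreExpo2} depends continuously on $C_0$. This removes the term $\frac{C_0C^2L_\varphi^2}{2\lambda^2}$ from $A$ and leaves the rate $r=\mu_\infty(\varphi)+|\sigma|^2C^2L_\varphi^2/\lambda^2$ unchanged.

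It then only remains to bound $\mathcal W_1(\delta_x,\mu_\infty)$ uniformly over $x\in\mathcal B(0,R)$. Since the only coupling of $\delta_x$ with $\mu_\infty$ is the product coupling, we have $\mathcal W_1(\delta_x,\mu_\infty)=\int|x-y|\mu_\infty(\dd y)\leqslant |x|+\int|y|\mu_\infty(\dd y)\leqslant R+\int|y|\mu_\infty(\dd y)$. This quantity is finite because $\mu_\infty\in\mathcal P_1(\R^d)$, as it is obtained as the fixed point of the $\mathcal W_1$-contraction. Inserting this into $A$ gives exactly the stated constant. Taking the supremum over $x\in\mathcal B(0,R)$ concludes the proof, since the resulting bound no longer depends on $x$.

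The only delicate point is the legitimacy of using $C_0=0$, i.e. whether the proof of Proposition~\ref{prop:concentration} (via \cite[Proposition 7.1]{LMM}) allows a deterministic initial condition. If one prefers not to revisit that proof, the limiting argument $C_0\downarrow 0$ suffices: $\delta_x$ satisfies $T_1(C_0)$ for all $C_0>0$, the left-hand side of \eqref{loc:concentreExpo2} does not depend on $C_0$, and the right-hand side converges as $C_0\to 0$ to the claimed bound.
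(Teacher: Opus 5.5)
Your proposal is correct and matches the paper's proof: apply \eqref{loc:concentreExpo2} with $\nu_0=\delta_x$, which satisfies $T_1$ with $C_0=0$, and then use the bound $\mathcal W_1(\delta_x,\mu_\infty)\leqslant R+\int_{\R^d}|y|\mu_\infty(\dd y)$ for $|x|\leqslant R$. Your limiting argument $C_0\downarrow 0$ is a sensible extra precaution, since the paper defines $T_1(C)$ only for $C>0$.
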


\begin{proof}
Apply Proposition~\ref{prop:concentration} with $\nu = \delta_x$ (which satisfies $T_1(C_0)$ with $C_0=0$) when $|x|\leqslant R$, bounding 
\[\mathcal W_1(\delta_x,\mu_\infty) \leqslant R+ \int_{\R^d} |y| \mu_\infty(\dd y)\,.\]
\end{proof}

We can now conclude as follows:

\begin{proof}[Proof of Proposition~\ref{prop:concentreGp*}]
This is the same arguments as in the proof of~\eqref{eq:limsupGJpeta}, except that we use~\eqref{loc:concentreExpo3} (with $\varphi=p \eta$) instead of~\eqref{locGCR} in~\eqref{locFGH}.
\end{proof}

\subsection{Proof of Theorem~\ref{thm:final}}\label{subsec:demofinal}

%
%

\begin{proof}[Proof of Theorem~\ref{thm:final}]
First, let us notice that, under the settings of Theorem~\ref{thm:final}, by applying Theorem~\ref{thm:contractionMarkov} with $p=1$ and using the bounds~\eqref{locboundparam} we get that the contraction~\eqref{eq:contraction_def} holds with $p=1$ for some constants $C=C_1$ and $\lambda=\lambda_1$ depending only on $\unif{L},\unif{\rho},\unif{S},\unif{M}$ and $\unif{\theta}$. In particular, as discussed before Proposition~\ref{prop:concentreGp*}, the process~\eqref{eq:EDS_X} admits a unique invariant measure $\mu_\infty$.

 We do a linear transform $Y_t = Q^{1/2} X_t$ and write $(P_t^Q)_{t\geqslant 0}$ the associated Markov semi-group. Using the equivalence between distances
\[|Q^{1/2}|^{-1} |Q^{1/2}(x-x')| \leqslant |x-x'| \leqslant |Q^{-1/2}||Q^{1/2}(x-x')|\qquad \forall x,x'\in\R^d\,,\]
we deduce that $(P_t^Q)_{t\geqslant 0}$ also satisfies the contraction~\eqref{eq:contraction_def} for $p=1$ with $C = \unif{M}C_1$ and $\lambda=\lambda_1$. Similarly, it is sufficient to conclude the proof of Theorem~\ref{thm:final} for $P_t^Q$ to deduce the same conclusion for $P_t$. 

The process $Y_t$ solves
\[\dd Y_t = b_Q(Y_t) \dd t + \sigma_Q \dd B_t\]
with $b_Q(y) = Q^{1/2} b(Q^{-1/2}y)$ and $\sigma_Q = Q^{1/2} \sigma$. Setting $y=Q^{1/2}x$ and $w=Q^{1/2}v$, the condition~\eqref{locvQv}  implies that, 
 for all $y \in \R^d$ and $w\in\Sd$,
\begin{equation*}
w\cdot  \na b_Q(y) w \leqslant \left\{\begin{array}{ll}
\unif{\delta}&  \text{if }  y \in  Q^{1/2}  \mathcal B(0, \unif{S}) \\
- \unif{\rho} & \text{if }y \notin Q^{1/2}  \mathcal B(0, \unif{S})\,.
\end{array}\right.
\end{equation*}
Taking $\unif{\delta}$ small enough so that $\unif{\delta} \leqslant \unif{\rho} \unif{q}$, Assumption~\ref{Assum:PointwiseContractInfty} is thus satisfied (for the process $Y_t$) with the function $\eta$ given by
\begin{equation}
\label{locvQv2}
\eta(y) =\left\{\begin{array}{ll}
\unif{\delta}&  \text{for } y \in  Q^{1/2}  \mathcal B(0, \unif{S})  \\
\unif{\delta} \co 1+   \po 1  - \frac{1-1/\unif{q}}{\unif{S_2}-\unif{S}}\pf \po |Q^{-1/2}y| - \unif{S} \pf     \cf   & \text{for } y \in Q^{1/2}  \mathcal B(0, \unif{S_2})\setminus Q^{1/2}  \mathcal B(0, \unif{S})\\ 
- \unif{\delta}/\unif{q} & \text{for }  y \notin Q^{1/2}  \mathcal B(0, \unif{S_2})\,.
\end{array}\right.
\end{equation}
 We can thus apply Proposition~\ref{prop:concentreGp*} which, thanks to~\eqref{loc:betakappaG} concludes the proof of Theorem~\ref{thm:final} \emph{provided $\rho'>0$ in~\eqref{locArho'}}. However, thanks to~\eqref{eq:muinftyq}
 \[\mu_\infty(\eta) \leqslant (1-\unif{q}) \unif{\delta} -  \unif{q} \frac{\unif{\delta}}{\unif{q}}  = - \unif{q} \unif{\delta}\,. \]
 As a consequence, 
 \[\rho' \geqslant \unif{a} \unif{\delta} - \unif{b} (\unif{\delta})^2\]
 for some $\unif{a},\unif{b}>0$ which depend only on $\unif{\rho},\unif{\rho}',\unif{S},\unif{M},\unif{\theta}$ and $p$. Taking $\unif{\delta} $ small enough concludes the proof.

\end{proof}

\section{Conclusion}\label{sec:conclusion}

Along this work,  several questions have been raised and left open. Let us  summarize them:
\begin{enumerate}
\item Does the Brownian motion on the periodic torus induces an $L^p$ Wasserstein contraction for $p>1$ (see Remark~\ref{rem:torus})  ? 
\item For a given $p>1$ and a non-convex potential $U\in\mathcal C^2(\R^d,\R)$, is there a temperature $\theta_*^2>0$ such that for $\theta \in(0,\theta_*]$, the overdamped Langevin process~\eqref{eq:overdampedSDE} does \emph{not} induce an $L^p$ Wasserstein contraction (i.e. $\beta_p(t) \geqslant 1$ for all $t\geqslant 0$) ? Same question but only along synchronous couplings (i.e. $\kappa_p(t) \geqslant 1$ for all $t\geqslant 0$) ? (see Figure~\ref{fig:Lyap})
\item Is the result of  \cite{MonmarcheBruit} still true in the kinetic  case ? Namely, for the kinetic Langevin process~\eqref{eq:kinLangevin} with given friction $\gamma>0$ and potential $U$, strongly convex outside a ball and such that $\|\na^2 U\|_\infty \leqslant \gamma^2$ (from which Assumption~\ref{Assum:PointwiseContractInfty} holds up to a suitable linear change of variable according to \cite[Proposition 4]{MonmarcheContraction}), and for $p>1$, does it exist $\theta_*>0$ such that the process induces an $L^p$ Wasserstein contraction whenever $\theta \geqslant \theta_*$ ?
\end{enumerate}
Apart from these specific questions, there are several general directions in which the present analysis could be further developed.

 A first one would be to follow the discussion in Section~\ref{subsec:sharperDecompose} and in particular to use the representation~\eqref{eq:locaLyap} with a large deviation result for the process $(X_t,\theta_t)_{t\geqslant 0}$ to end up with a result similar to Theorem~\ref{thm:FerreStoltz} but in terms of $\kappa_p(t)$ instead of the bound $G_p^*(t)$ (leading to a variational formulation of the Lyapunov exponent $\Lambda_p^*$ in~\eqref{eq:Lambdap*}).

As a second direction of interest, a usual extension of coupling  arguments is to consider systems of interacting particles (for instance in the mean-field scaling), in the perturbative regime where the interaction is small enough with respect to the contraction rate, as for instance in \cite{HerdaPerthameMonmarche,LMM,schuh2024global,MonmarcheBruit}. As mentioned in the introduction, Wasserstein contraction in this situation is used to get uniform-in-time propagation of chaos, and uniform-in-$N$ functional inequalities~\cite{10.1214/24-EJP1217}. This would be a natural situation for applying our analysis, ideally to get general results in the high-temperature regime (long-time convergence to equilibrium, and thus a fortiori Wasserstein contraction, being not expected to be uniform in $N$ in the low-temperature regime in general situations, see \cite{2025arXiv250300157M} and references within).

Finally, the characterization of the contraction in terms of the Lyapunov exponent of a linearized problem can be generalized to settings beyond diffusion processes, including discrete space Markov chains.

\bibliographystyle{plain}
\bibliography{biblio} 

\end{document}